\title{Macroscale, slowly varying, models emerge from the microscale dynamics in long thin domains}
\author{A.~J. Roberts}
\date{\today}
\def\llangle{\langle\!\langle}
\def\rrangle{\rangle\!\rangle}
\newcommand{\ix}{\ensuremath{\mathit x}}
\newcommand{\fx}{\ensuremath{
    \ifcase0x
    \or\mathrm x
	\or\mathchoice%
	  {\text{\footnotesize$\mathcal X$}}%
	  {\text{\footnotesize$\mathcal X$}}%
	  {\text{\scriptsize  $\mathcal X$}}%
	  {\text{\tiny        $\mathcal X$}}%
	\or\bar x
	\fi}}
\newcommand{\diag}{\operatorname{diag}}
\DeclareMathOperator{\Pe}{Pe}
\newcommand{\ydim}{Y}
\newcommand{\pt}[2][X]{\ifx0#2%
  \else\ifx1#2(x-#1)%
  \else\frac{(x-#1)^{#2}}{#2!}%
  \fi\fi}
\newcommand{\xit}[1]{\ifx0#1 \else\ifx1#1\xi
  \else\frac{\xi^{#1}}{#1!}\fi\fi}
\newcommand{\tr}[1]{#1^\dagger}
\newcommand{\xec}{\ensuremath{x_c}}
\newcommand{\xey}{\hat x} 
\newcommand{\xed}{\ensuremath{x_d}}
\newcommand{\rvd}{\rv'}
\newcommand{\nuv}{\ensuremath{\|\vec u\|}}
\newcommand{\tc}{\ensuremath{\tilde c}}
\newcommand{\td}{\ensuremath{\tilde d}}
\newcommand{\tC}{\ensuremath{\tilde C}}
\newcommand{\tD}{\ensuremath{\tilde D}}
\newcommand{\tF}{\ensuremath{\tilde F}}
\newcommand{\tu}{\ensuremath{\tilde u}}
\newcommand{\tL}{\ensuremath{\tilde\cL}}
\newcommand{\tA}{\ensuremath{\tilde\cA}}
\newcommand{\tB}{\ensuremath{\tilde\cB}}
\newcommand{\tV}{\ensuremath{\tilde\cV}}
\newcommand{\tW}{\ensuremath{\tilde\cW}}
\def\ou\big(#1,#2,#3\big){{e^{\if#31\else#3\fi t}\star}#1\,}
\newcommand{\Z}[1]{{e^{-t}\star}#1\,}
\newcommand{\bigtimes}{\mathop{\text{\LARGE$\times$}}}
\begin{document}

\maketitle

\begin{abstract}
Many practical approximations in physics and engineering invoke a relatively long physical domain with a relatively thin cross-section.  In this scenario we typically expect the system to have structures that vary slowly in the long dimension.  Extant mathematical approximation methodologies are typically self-consistency or limit arguments as the aspect ratio becomes unphysically infinite.  The proposed new approach is to analyse the dynamics based at each cross-section in a rigorous Taylor polynomial.  Then centre manifold theory supports the local modelling of the system's dynamics with coupling to neighbouring locales treated as a non-autonomous forcing.  The union over all cross-sections then provides powerful new support for the existence and emergence of a centre manifold model global in the long domain, albeit finite sized.  Our resolution of the coupling between neighbouring locales leads to novel quantitative estimates of the error induced by long slow space variations.  Four examples help develop and illustrate the approach and results.  The approach developed here may be used to quantify the accuracy of known approximations, to extend such approximations to mixed order modelling, and to open previously intractable modelling issues to new tools and insights.
\end{abstract}

\tableofcontents

\section{Introduction}

System of large spatial extent in some directions and relatively thin extent in other dimensions are important in engineering and physics. 
Examples include thin fluid films, flood and tsunami modelling \cite[e.g.]{Noakes06, Bedient88, LeVeque2011}, pattern formation in systems near onset \cite[e.g.]{Newell69, Cross93, Westra2003}, and wave interactions \cite[e.g.]{Nayfeh71b, Griffiths05}.
There are many formal approaches to mathematically describe,  by means of modulation or amplitude equations, the relatively long time and space evolution of these systems~\cite[e.g.]{Vandyke87}.
This article develops a new general approach to illuminate and enhance such practical approximations, albeit limited here to one long spatial direction.

The new approach is to examine the dynamics in the locale around any cross-section.
We find that a Taylor polynomial representation of the dynamics is only coupled to neighbouring locales via the highest order resolved derivative. 
Treating this coupling as an `uncertain forcing' of the local dynamics we in essence apply non-autonomous centre manifold theory \cite[e.g.]{Potzsche2006, Haragus2011} to prove the existence and emergence of a local model.
This theoretical support applies for all cross-sections and so establishes existence and emergence of a centre manifold model globally over the spatial domain to form an `infinite' dimensional centre manifold \cite[e.g.]{Gallay93, Aulbach96, Aulbach2000}.
Sections~\ref{sec:mdhx}--\ref{sec:pmid} develop the approach for linear systems, and then sections~\ref{sec:nhem}--\ref{sec:mndcd} generalise the approach to nonlinear systems.

In addition to existence and emergence proofs, we also establish a practical construction procedure based upon a polynomial generating function.
One result is that the new construction recovers symbolically the traditional multiple scale modelling as a special case (Corollary~\ref{cor:msm}), and justifies rigorously an established but previously formal procedure that derives `mixed order' models (Corollary~\ref{cor:robformproc}).
Further, the new approach derives a novel quantitative estimate of the leading error, equation~\eqref{eq:pderemain}, that results from the assumption of slow variations in space.
Interestingly, the theory is still valid in boundary layers and shocks, it is just that then the error terms are so large that the assumption of slow space variations is inappropriate.

Note that this article is not about finding and characterising steady solutions in long thin domains as explored, for example, by \cite{Haragus2011} or \cite{Mielke86}.  
Instead, this article focusses on the time evolution of structures that `vary slowly' in space.

Throughout, examples illustrate the concepts.  
Sections~\ref{sec:mdhx} and~\ref{sec:nhem} develop the basic concepts on a simple heat exchanger, linear and nonlinear respectively.
Sprinkled through the development of general linear theory, section~\ref{sec:pmid}, is the application to dispersion along a long thin channel \cite[e.g.]{Taylor53, Mercer90}.
The nonlinear theory developed in section~\ref{sec:mndcd}
is applied by sections~\ref{sec:egpes} and~\ref{sec:anpf} to  derive the Ginzburg--Landau model of patterns governed by the Swift--Hohenberg \pde, but now complete with a new quantitative error estimate.
The computer algebra code of Appendices~\ref{sec:camhe}, \ref{sec:camnhe} and~\ref{sec:campfshe} implements practical construction algorithms for these  examples and confirms the modelling extends to arbitrary order.

This article is \(\epsilon\)-free.  \let\epsilon\varepsilon
Although the analysis is based upon a fixed reference equilibrium (taken to be at the origin without loss of generality), crucially the subspace and centre manifold theorems guarantee the existence and emergence of models in a finite region about this reference equilibrium.
Sometimes such a finite region of applicability is large.
The only epsilons in this article appear in comparisons with other methodologies.

\section{Macroscale dynamics of a heat exchanger}
\label{sec:mdhx}

This section introduces the novel approach in perhaps the simplest example of the evolution of fields which slowly vary in space.  
The next section~\ref{sec:pmid} develop the approach for general linear \pde{}s.

\begin{figure}
\centering
\setlength{\unitlength}{0.01\linewidth}
\begin{picture}(100,14)
\put(20,1){\(b(x,t)\)}
\put(20,11){\(a(x,t)\)}
\put(45,6.5){exchange}
\multiput(20,0)(20,0)4{\color{magenta}
\put(0,6){\vector(0,+1){3}}
\put(2,9){\vector(0,-1){3}}
}
\thicklines
\multiput(90,10)(-20,0)4{\color{red}\vector(-1,0){20}}
\multiput(10,5)(+20,0)4{\color{blue}\vector(+1,0){20}}
\end{picture}
\caption{schematic diagram of two pipes (red and blue) carrying `heat' to the left and the right, with `temperature' fields~\(a\) and~\(b\), and exchanging heat.}
\label{fig:hescheme}
\end{figure}
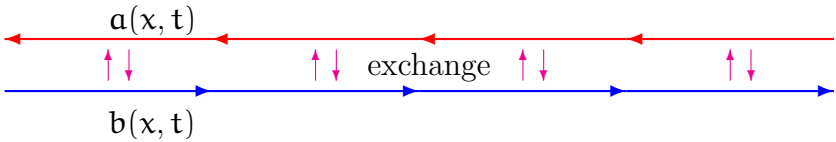

Consider the idealistic heat exchanger of Figure~\ref{fig:hescheme}. 
Say hot fluid enters the top pipe from the right having temperature field~$a(x,t)$, and cold fluid enters the bottom pipe from the left with temperature field~$b(x,t)$.  
Straightforward modelling gives that the governing \pde{}s are
\begin{equation}
\D ta=+U\D xa+\frac R2(b-a)
\quad\text{and}\quad 
\D tb=-U\D xb+\frac R2(a-b),
\label{eq:hedim}
\end{equation}
for flow to the left and right at equal and opposite velocities~$\pm U$, and for some inter-pipe exchange at rate~$R$.
Equivalently, \(a(x,t)\)~and~\(b(x,t)\) could be the probability density function of a random walker who walks steadily at constant speed~\(U\) but changes direction at random times, the changes occur at a rate~\(R\).
Our challenge is to find a description of the large time heat distribution, or equivalently the large time probabilty distribution of the random walker.

Non-dimensionalise space and time by choosing the reference time~$1/R$ and the reference length~$U/R$ so that the \pde{}s~\eqref{eq:hedim} are equivalent to the non-dimensional \pde{}s
\begin{equation}
\D ta=+\D xa+\rat12(b-a)
\quad\text{and}\quad 
\D tb=-\D xb+\rat12(a-b).
\label{eq:hepde}
\end{equation}
These \pde{}s are to be modelled with boundary conditions, for example that $a={}$hot at $x=L$, and $b={}$cold at $x=0$.
We aim to find the model that the mean temperature, $c(x,t)=\rat12(a+b)$, satisfies the diffusion \pde 
\begin{equation}
\D tc\approx\DD xc \quad\text{for }0<x<L\,.
\label{eq:appdiff}
\end{equation}
Many extant mathematical methods, such as homogenisation and multiple scales \cite[e.g.]{Engquist08, Pavliotis07}, will straightforwardly derive this diffusion \pde.
The challenge here is to {rigorously} derive this \pde\ from a local analysis, 
complete with a novel quantitative error estimate,
and as a naturally emergent phenomena from a wide domain of initial conditions.

A future challenge is to determine the boundary conditions on the mean field~$c$,

The analysis here is clearer in `cross-pipe' modes.
Thus transform to mean and difference fields:
\begin{equation}
c(x,t):=\rat12(a+b)
\quad\text{and}\quad 
d(x,t):=\rat12(a-b),
\label{eq:meandiff}
\end{equation}
that is, \(a=c+d\) and \(b=c-d\).
Considering the mean and difference of the \pde{}s~\eqref{eq:hepde} gives the equivalent \pde\ system for these mean and difference fields
\begin{equation}
\D tc=\D xd
\quad\text{and}\quad 
\D td=-d+\D xc\,.
\label{eq:hecan}
\end{equation}
In this form we readily see that the difference field~$d$ tends to decay exponentially quickly, but that interaction between gradients of the mean and difference fields generates other effects: effects that are crucial in deriving the approximate model \pde~\eqref{eq:appdiff}.

Our approach is to expand the fields in their local spatial structure based around a station $x=X$.
As commented earlier, this approach is \(\epsilon\)-free.

\subsection{In the interior}

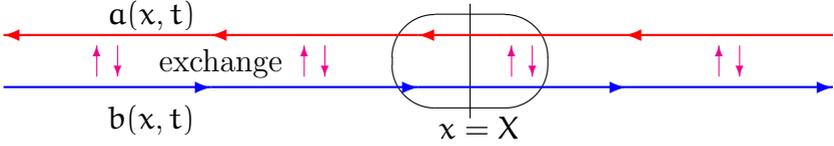
\begin{figure}
\centering
\setlength{\unitlength}{0.01\linewidth}
\begin{picture}(100,14)
\put(55,2){\line(0,1){11}}
\put(55,7.5){\oval(15,9)}
\put(52,0){\(x=X\)}
\put(20,1){\(b(x,t)\)}
\put(20,11){\(a(x,t)\)}
\put(25,6.5){exchange}
\multiput(19,0)(20,0)4{\color{magenta}
\put(0,6){\vector(0,+1){3}}
\put(2,9){\vector(0,-1){3}}
}
\thicklines
\multiput(90,10)(-20,0)4{\color{red}\vector(-1,0){20}}
\multiput(10,5)(+20,0)4{\color{blue}\vector(+1,0){20}}
\end{picture}
\caption{schematic diagram of the heat exchanger showing that we focus on modelling the dynamics in the locale of a fixed station \(x=X\)\,.}
\label{fig:hestation}
\end{figure}

Fix upon a station along the pipes at $x=X$ as shown in Figure~\ref{fig:hestation}.  
Consider the temperature fields in the vicinity of $x=X$.  
In the vicinity let's invoke Taylor's Remainder Theorem to express the fields exactly:
\begin{subequations}\label{eqs:trt}%
\begin{eqnarray}
c(x,t)&=&c_0(X,t)+c_1(X,t)\pt1 +c_2(X,t)\pt2
\nonumber\\&&{}
+c_3(X,t)\pt3
 +c_4(X,x,t)\pt4,
\label{eq:ctrt}
\\
d(x,t)&=&d_0(X,t)+d_1(X,t)\pt1 +d_2(X,t)\pt2
\nonumber\\&&{}
+d_3(X,t)\pt3 
+d_4(X,x,t)\pt4,
\label{eq:dtrt}
\end{eqnarray}
\end{subequations}
where by Taylor's Theorem we define
\begin{itemize}
\item $c_n(X,t):=\Dn xnc|_{x=X}$ for \(n=0,1,2,3\);
\item $c_4(X,x,t):=\Dn x4c|_{x=\xec}$ for some \(X \lessgtr\xec(X,x,t) \lessgtr x\);
\item $d_n(X,t):=\Dn xnd|_{x=X}$ for \(n=0,1,2,3\); and
\item $d_4(X,x,t):=\Dn x4d|_{x=\xed}$ for some \(X \lessgtr\xed(X,x,t) \lessgtr x\).
\end{itemize}
That is, \(c_4(X,x,t)\) and \(d_4(X,x,t)\) are fourth order derivatives but evaluated at some nearby but uncertain and typically moving locations.

For definiteness, this section truncates to a quartic approximation, \(N=4\); Appendix~\ref{sec:camhe} lists computer algebra code that not only derives the results summarised here, but also derives corresponding results for general truncation order~\(N\).

Substituting the Taylor expansions~\eqref{eqs:trt} into the governing \pde{}s~\eqref{eq:hecan} leads to (Appendix~\ref{sec:sts})
\begin{subequations}\label{eqs:tay4}%
\begin{eqnarray}
\sum_{n=0}^4 \D t{c_n}\pt{n}
&=&\sum_{n=0}^3d_{n+1}\pt{n}+\D x{d_4}\pt{4}\,,
\label{eq:ctay4}\\
\sum_{n=0}^4 \D t{d_n}\pt{n}
&=&
\sum_{n=0}^3(-d_{n}+c_{n+1})\pt{n}
\nonumber\\&&{}
+\left(-d_4+\D x{c_4}\right)\pt{4}\,.
\label{eq:dtay4}
\end{eqnarray}%
\end{subequations}

\paragraph{Local ODEs}

The derived equations~\eqref{eqs:tay4} are exact everywhere, but some places (namely near the station~\(X\)) they are useful in that the remainder terms \(c_{4x}:=\D x{c_4}\) and \(d_{4x}:=\D x{d_4}\) are negligibly small.
We derive a set of linearly independent equations for the coefficient functions \(c_n\) and~\(d_n\) simply by differentiation and evaluation at $x=X$ (Appendix~\ref{sec:lode}):
this process is almost the same as equating coefficients of~\((x-X)^n\), but with care to maintain exactness one finds extra terms generated by the remainders \(c_{4x}\) and~\(d_{4x}\).
The various derivatives of~\eqref{eq:ctay4} evaluated at \(x=X\) lead to the sequence of five \ode{}s for the $c_n$ coefficients:
\begin{subequations}\label{eqs:5pde}%
\begin{equation}
\dot c_{0}=d_{1},\quad
\dot c_{1}=d_{2},\quad
\dot c_{2}=d_{3},\quad
\dot c_{3}=d_{4},\quad
\dot c_{4}=5d_{4x}.
\label{eq:5cpde}
\end{equation}
Similarly, the various derivatives of~\eqref{eq:ctay4} evaluated at \(x=X\) lead to the five \ode{}s
\begin{equation}
\dot d_{0}=-d_0+c_{1},\quad
\ldots,\quad
\dot d_{3}=-d_3+c_{4},\quad
\dot d_{4}=-d_4+5c_{4x}.
\label{eq:5dpde}
\end{equation}%
\end{subequations}
Hereafter, because of the evaluation at the station \(x=X\), the symbols \(c_4\) and~\(d_4\) denote \(c_4(X,X,t)=\Dn x4c|_{x=X}\) and \(d_4(X,X,t)=\Dn x4d|_{x=X}\) respectively.
Further, the symbols \(c_{4x}\) and~\(d_{4x}\) denote the definite but uncertain `fifth-order' derivatives \(\D x{c_4}|_{x=X}\) and~\(\D x{d_4}|_{x=X}\).
\footnote{The `uncertain' derivatives \(c_{4x}\) and~\(d_{4x}\) might appear to be simple fifth-order derivatives, but they are a little more subtle.  For example, recall $c_4(X,x,t):=\Dn x4c|_{x=\xec}$ so by the chain rule \(c_{4x}=\left(\Dn x5c\times \D x\xec\right)|_{x=X}\) and hence is a fifth-derivative multiplied by an uncertain rate of change of location~\(\xec\).}
The functions $d_{4x}$~and~$c_{4x}$ are part of the closure problem for the local dynamics: 
the derivatives $d_{4x}$~and~$c_{4x}$ couple the dynamics at a station~$X$ with the dynamics at neighbouring stations.
It is by treating the terms $d_{4x}$~and~$c_{4x}$ as `uncertain' inputs into the local dynamics that we notionally make the vast simplification in \emph{apparently} reducing the problem from one of an infinite dimensional dynamical system to a tractable finite dimensional system.

\subsection{The slow subspace emerges}
\label{sec:sse}

For a dynamical system approach to modelling the local dynamics, define the state vector $\uv=(c_0,d_0, c_1,d_1, c_2,d_2, c_3,d_3, c_4,d_4)$ and group the ten \ode{}s~\eqref{eqs:5pde} into the matrix-vector system, of the form $\de t{\uv}=\cL\uv+\rv(t)$,
\begin{equation}
\de t{\uv}=\underbrace{\begin{bmatrix} 
0&0&0&1\\ 0&-1&1&0\\
&&0&0&0&1\\ &&0&-1&1&0\\
&&&&0&0&0&1\\ &&&&0&-1&1&0\\
&&&&&&0&0&0&1\\ &&&&&&0&-1&1&0\\
&&&&&&&&0&0\\ &&&&&&&&0&-1\\
\end{bmatrix}}_{\cL}\uv
+\underbrace{\begin{bmatrix} 0\\0\\0\\0\\0\\0\\0\\0\\
5d_{4x}\\5c_{4x} \end{bmatrix}}_{\vec r}
\label{eq:odes}
\end{equation}
where $d_{4x}$~and~$c_{4x}$ are some definite but uncertain functions.

\paragraph{Local slow subspace}
The system~\eqref{eq:odes} appears in the form of a `forced' linear system, so our usual first task is to understand the corresponding linear homogeneous system obtained by omitting the `forcing' (although here the the `forcing' is uncertain coupling with neighbouring locales).
The corresponding homogeneous system is upper triangular (also block toeplitz), so its eigenvalues are the diagonal, namely $0$~and~$-1$ each with multiplicity five.
The five eigenvalues~$-1$ indicates that after transients decay, roughly like~\Ord{e^{-t}}, the system evolves on the 5D slow subspace of the five eigenvalues~\(0\).

Let's construct this 5D slow subspace.
Two eigenvectors corresponding to the zero eigenvalue are found immediately, namely
\begin{equation*}
\vec v_0:=(1,0,\ldots,0),\quad \vec v_1:=(0,1,1,0,\ldots,0).
\end{equation*}
Other eigenvectors are generalised and come from solving $\cL \vec v_2=\vec v_0$, $\cL\vec v_3=\vec v_1$ and $\cL \vec v_4=\vec v_2-\vec v_0$:\footnote{An advantage of this choice of eigenvectors is that they are one in their $c$ components.}
\begin{align*}&
\vec v_2:=(0,0,0,1,1,0,0,0,0,0),\\&
\vec v_3:=(0,-1,0,0,0,1,1,0,0,0),\\&
\vec v_4:=(0,0,0,-1,0,0,0,1,1,0).
\end{align*}
Setting the matrix $\cV :=\begin{bmatrix} \vec v_0&\cdots&\vec v_4 \end{bmatrix}\in\RR^{10\times5}$, the slow subspace is then $\uv=\cV \cv$ where we conveniently choose to use $\cv:=(c_0,\ldots,c_4)\in\RR^5$ to directly parametrise the slow subspace because of the form chosen for the eigenvectors~$\vec v_k$.
On this slow subspace, from the eigenvectors via $\uv=\cV \cv$\,, the difference variables 
\begin{equation*}
\dv:=(d_0,d_1,d_2,d_3,d_4)=(c_1-c_3,c_2-c_4,c_3,c_4,0).
\end{equation*}
Further, on this slow subspace the evolution is guided by a toeplitz matrix, namely
\begin{equation*}
\de t{\cv}=\cA\cv=\begin{bmatrix} 0&0&1&0&-1\\ 
0&0&0&1&0\\ 0&0&0&0&1\\ 0&0&0&0&0\\ 0&0&0&0&0 \end{bmatrix}\cv
\end{equation*}
However,  the system~\eqref{eq:odes} is perturbed from this slow subspace by the `forcing' of the uncertain coupling.
We next use a time dependent coordinate transform to account for the uncertain coupling.

\subsection{Time dependent normal form}
\label{sec:tdnf}

Near identity coordinate transforms underpin modelling dynamics.
In particular, time dependent coordinate transforms empower  understanding of the modelling of non-autonomous, and stochastic, dynamical systems \cite[e.g.]{Aulbach99, Arnold98, Roberts06k}.
This section analogously uses a time dependent coordinate transformation to separate exactly the slow and fast modes of the system~\eqref{eq:odes} in the presence of the uncertain `forcing'.

The coordinate transform introduces new dependent variables~\Cv\ and~\Dv.
In some sense, the new variables \(\Cv\approx \cv\) and \(\Dv\approx \dv\) so the coordinate transform is `near identity'.
Let's choose to parametrise precisely the slow subspace of the system~\eqref{eq:odes} by the variables~\(\Cv\): that is, on the subspace where the new stable variables \(\Dv=\vec 0\), then we insist on the exact identity \(\cv=\Cv\).
This choice simplifies subsequent construction of slowly varying models such as~\eqref{eq:appdiff}.

In the coordinate transform, the effects of the uncertain remainders appear as integrals over their past history.
In this problem we need to invoke the convolution
\begin{equation}
\ou\big(w(t),tt,-\big):=\int_0^te^{s-t}w(s)\,ds\,.
\label{eq:con}
\end{equation}
Then a key property is the time derivative 
\(\de t{(\ou\big(w,tt,-\big))}=-\ou\big(w,tt,-\big)+w\).

To construct the coordinate transformation one uses well established iteration described elsewhere \cite[e.g.]{Roberts06k}.
The details are not significant here, all we need are the results.
The computer algebra code of Appendix~\ref{sec:camhe}, for the case \(N=4\), produces the exact coordinate transform~\eqref{eq:cxC}--\eqref{eq:dxD}.%
\footnote{My web service \cite[]{Roberts07d} generates an analogous normal form transformation of the system~\eqref{eq:odes}.  
The only difference is that the web service chooses a parametrisation that avoids history integrals in the evolution on the slow subspace.} 
The coordinate transformation is exact because there is no neglect of any `small' terms.

Invoke the following time dependent, coordinate transform, \((\Cv,\Dv)\mapsto(\cv,\dv)\):
\begin{subequations}%
\begin{eqnarray}
c_0&=&C_0-D_1+D_3\,,
\\c_1&=&C_1-D_2+D_4\,,
\\c_2&=&C_2-D_3\,,
\\c_3&=&C_3-D_4\,,
\\c_4&=&C_4\,;
\end{eqnarray}\label{eq:cxC}%
\end{subequations}%
\begin{subequations}%
\begin{eqnarray}
d_0&=&D_0+C_1-C_3+5\ou\big(\ou\big(c_{4x},tt,-\big),tt,-\big)
+5\ou\big(\ou\big(\ou\big(c_{4x},tt,-\big),tt,-\big),tt,-\big),
\\d_1&=&D_1+C_2-C_4+5\ou\big(d_{4x},tt,-\big)+5\ou\big(\ou\big(d_{4x},tt,-\big),tt,-\big),
\\d_2&=&D_2+C_3-5\ou\big(\ou\big(c_{4x},tt,-\big),tt,-\big),
\\d_3&=&D_3+C_4- 5\ou\big(d_{4x},tt,-\big),
\\d_4&=&D_4+5\ou\big(c_{4x},tt,-\big).
\end{eqnarray}\label{eq:dxD}%
\end{subequations}
In these new variables~\((\Cv,\Dv)\) the original system~\eqref{eq:odes} is identically the separated system
\begin{subequations}%
\begin{eqnarray}
{\dot D_0}&=&-D_0-D_2+D_4\,,
\\{\dot D_1}&=&-D_1-D_3\,,
\\{\dot D_2}&=&-D_2-D_4\,,
\\{\dot D_3}&=&-D_3\,,
\\{\dot D_4}&=&-D_4\,.
\end{eqnarray}\label{eq:dDdt}%
\end{subequations}%
\begin{subequations}%
\begin{eqnarray}
{\dot C_0}&=&C_2-C_4+5\ou\big(d_{4x},tt,-\big) + 5\ou\big(\ou\big(d_{4x},tt,-\big),tt,-\big),
\\{\dot C_1}&=&C_3- 5\ou\big(\ou\big(c_{4x},tt,-\big),tt,-\big),
\\{\dot C_2}&=&C_4- 5\ou\big(d_{4x},tt,-\big),
\\{\dot C_3}&=&5\ou\big(c_{4x},tt,-\big),
\\{\dot C_4}&=&5d_{4x}\,.
\end{eqnarray}\label{eq:dCdt}%
\end{subequations}
In this separated system of these new variables, one immediately sees from~\eqref{eq:dDdt} that the new stable variables \(D_n\to0\) as \(t\to\infty\); moreover, they decay exponentially quickly, \(D_n=\Ord{e^{-\gamma t}}\) for any chosen rate \(0<\gamma<1\).
That is, \(\Dv=\vec 0\) is the exact slow subspace for the `forced' system~\eqref{eq:odes}.

\paragraph{The slowly varying model}
Recall the exact Taylor polynomial~\eqref{eq:ctrt}.
Given the exact coordinate transform~\eqref{eq:cxC}, and that \(D_n=\Ord{e^{-\gamma t}}\), the polynomial~\eqref{eq:ctrt} asserts the mean field
\begin{eqnarray}
c(x,t)&=&
C_0(X,t)
+\pt1C_1(X,t)
+\pt2C_2(X,t)
\nonumber\\&&{}
+\pt3C_3(X,t)
+\pt4C_4(X,t)
+\Ord{e^{-\gamma t}}.
\label{eq:slowcid}
\end{eqnarray}
Crucially, the left-hand side is independent of the station~\(X\).
If the right-hand side was just a local approximation, then the field it generates would depend upon the station~\(X\).
But the right-hand side is exact (with its unknown but exponentially quickly decaying transients).
This exactness is maintained because we keep the remainder terms in the analysis.
Consequently, the mean field expression~\eqref{eq:slowcid} is independent of the station~\(X\).

To obtain an exact \pde\ of the slow variations in the mean field~\(c\), take the time derivative of~\eqref{eq:slowcid} and evaluate at \(x=X\).  Remembering that the derivative of the history convolution \(\de t{}(\ou\big(w,tt,-\big))=-(\ou\big(w,tt,-\big))+w\), we derive
\begin{eqnarray*}
\D tc&=&
\D t{C_0}+\Ord{e^{-\gamma t}}
\\&=&C_2-C_4
+5 \ou\big(d_{4x},tt,-\big)
+5\ou\big(\ou\big(d_{4x},tt,-\big),tt,-\big)
+\Ord{e^{-\gamma t}}
\\&=&\left[C_2+\ou\big(d_{4x},tt,-\big)\right]-C_4
+\ou\big(\ou\big(d_{4x},tt,-\big),tt,-\big)+ \ou\big(d_{4x},tt,-\big)
+\Ord{e^{-\gamma t}}
\\&=&c_2-c_4
+5(1+\ou\big({},tt,-\big))\ou\big(d_{4x},tt,-\big)
+\Ord{e^{-\gamma t}}
\\&=&\DD xc-\Dn x4c
+5(1+\ou\big({},tt,-\big))\ou\big(d_{4x},tt,-\big)
+\Ord{e^{-\gamma t}}.
\end{eqnarray*}
Consequently, an exact statement of the mean field~\(c\) is thus
\begin{equation}
\D tc=\DD xc-\Dn x4c
+5(1+\ou\big({},tt,-\big))\ou\big(d_{4x},tt,-\big)
+\Ord{e^{-\gamma t}}.\label{eq:pdeC}
\end{equation}
In principle, equation~\eqref{eq:pdeC} is an exact integro-differential equation for the system: the integral part coming from the history convolutions of the coupling~\(d_{4x}\) with other stations~\(X\).
In practice, we read off an approximate model from this transformed version of the original heat exchanger system~\eqref{eq:hecan}.
The rigorous slowly varying model is then the \pde~\eqref{eq:pdeC} with \(\Ord{e^{-\gamma t}}\) neglected as a quickly decaying transient, and  the uncertain~$(1+\ou\big({},tt,-\big))\ou\big(d_{4x},tt,-\big)$ neglected as its error.

To characterise the magnitude of this error, recall from~\eqref{eq:dxD} that at all stations \(d_4 =5\ou\big(c_{4x},tt,-\big)+\Ord{e^{-\gamma t}}\).
We thus estimate that $5d_{4x}=\Ord{c_{4xx},e^{-\gamma t}}=\Ord{\Dn x6c,e^{-\gamma t}}$.

The \pde~\eqref{eq:pdeC}, with its second and fourth order derivatives of the mean field~\(c\), is an example of so-called mixed order models.
The extant mathematical methodologies of homogenisation and multiple scales promote an aversion to such mixed order models.
Our analysis shows that such models are rigorously justifiable.

\section{A PDE models interior cylindrical dynamics}
\label{sec:pmid}

Inspired by the successful exact modelling of the heat exchanger in section~\ref{sec:mdhx}, this section establishes analogous exact modelling in more general linear systems.
This section forms a foundation for the nonlinear, centre manifold, theory of section~\ref{sec:mndcd}.

This section develops models of the macroscale dynamics of any \pde\ in the linear class
\begin{equation}
\D tu=\fL_0u+\fL_1\D xu+\fL_2\DD xu
+\cdots
\label{eq:upde}
\end{equation}
on a cylindrical domain~\(\XX\times\YY\) for some field~\(u(x,y,t)\) in a given Banach space~\UU\ (finite or infinite dimensional), where \(u:\XX\times\YY\times\RR\to\UU\) is a function of 1D longitudinal position \(x\in\XX\subset \RR\), cross-sectional position \(y\in\YY\subset\RR^\ydim\), and time \(t\in\RR\).
The longitudinal domain~\(\XX\) (open) may be finite, say~\((0,L)\), or infinite~(\RR), or \(L\)-periodic.
The cross-section~\YY\ may be as simple as the index set~\(\{1,2\}\) as for the heat exchanger~\eqref{eq:hepde}, or the whole of~\(\RR^\ydim\) as in application to the modelling of marginal probability distributions by Fokker--Planck equations \cite[e.g.]{Knobloch83}.
The operators~\(\fL_\ell\) are assumed autonomous and independent of longitudinal position~\(x\); they only operate in the `microscale' cross-section~\(y\).%
\footnote{Nonetheless, cross-sectional operators that depend upon longitudinal position~\(x\) and time~\(t\) are of interest in a range of applications and are the subject of further research.}
In applications, the sum of terms in the \pde~\eqref{eq:upde} often truncate at the second order derivatives. 
However, our analysis caters for arbitrarily high order \pde{}s, such as the fourth order truncation invoked in the pattern formation example of subsection~\ref{sec:egpes}.

\begin{example}[shear dispersion] \label{eg:sheardisp}
As an example threaded through the discourse, consider classic shear dispersion in a 2D channel \cite[e.g.]{Smith83b}.  
The system has non-dimensional mechanisms parametrised by a Peclet number~\(\Pe\), the longitudinal advection along the channel occurs with velocity \(w(y):=\Pe(1-y^2)\), and diffusion of strength one: for a concentration field~\(u(x,y,t)\) the non-dimensional governing conservative advection-diffusion equation is
\begin{equation*}
\D tu=-w(y)\D xu+\DD xu+\DD yu\,.
\end{equation*}
This shear dispersion system fits into our framework by the following choices: operator \(\fL_0:=\DD y{}\) with  Neumann boundary conditions; operator \(\fL_1:=-w(y)\); operator \(\fL_2:=1\); and \(\fL_\ell:=0\) for \(\ell>2\).  
The channel cross-section restricts~\(y\) to the non-dimensional domain \(\YY=\{y\in\RR: |y|<1\}\) and associates the operator~\(\fL_0\) with conservative Neumann boundary conditions of \(\D yu=0\) at \(y=\pm 1\).  
The channel typically stretches from an inlet at \(x=0\) to an outlet at \(x=L\) (notionally large) so that the longitudinal domain \(\XX=\{x\in\RR: 0<x<L\}\).
\end{example}

For \pde{}s in the general form~\eqref{eq:upde}, assume the field~\(u\) is smooth enough to have continuous \(2N\)~derivatives in~\(x\), \(u\in C^{2N}(\XX\times\YY\times\RR,\UU)\), for some pre-specified Taylor series truncation~\(N\).

This section establishes the following proposition.
\begin{proposition}[slowly varying \pde] \label{thm:svpde}
Let \(u(x,y,t)\) be governed by a \pde\ of the form~\eqref{eq:upde} satisfying Assumption~\ref{ass:spec}.  
Define the `mean field' \(c(x,t):=\langle Z_0(y),u(x,y,t)\rangle\) for \(Z_0(y)\) and inner product of Definition~\ref{def:adj}.
Then, in the regime of `slowly varying solutions' the mean field~\(c\) satisfies the \pde
\begin{equation}
\D tc=\sum_{n=0}^NA_n\Dn xnc\,, \quad x\in\XX\,,
\label{eq:dcdta}
\end{equation}
in terms of matrices~\(A_n\) given by~\eqref{eq:vzadef}--\eqref{eq:toeport}, 
to an error quantified by~\eqref{eq:pderemain}, and upon ignoring transients decaying exponentially quickly in time.
\end{proposition}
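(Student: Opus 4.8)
Proposition~\ref{thm:svpde} is the operator-valued generalisation of the heat-exchanger computation of Section~\ref{sec:mdhx}, so the plan is to lift that construction essentially verbatim, with the cross-sectional operators~\(\fL_\ell\) replacing the scalar coefficients. First I would fix a station~\(X\) and invoke Taylor's Remainder Theorem in the longitudinal variable to write the field exactly as
\begin{equation*}
u(x,y,t)=\sum_{n=0}^{N-1}u_n(X,y,t)\,\pt{n}+u_N(X,x,y,t)\,\pt{N},
\end{equation*}
with \(u_n:=\Dn xnu|_{x=X}\) for \(n<N\) and \(u_N\) the \(N\)th derivative evaluated at an uncertain nearby location. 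Substituting into~\eqref{eq:upde}, then differentiating in~\(x\) and evaluating at \(x=X\), produces a sequence of \(N+1\) coupled cross-sectional evolution equations for the coefficient fields~\(u_n(X,y,t)\), in which—exactly as the remainders \(c_{4x},d_{4x}\) did before—the longitudinal coupling to neighbouring stations enters only through the single `uncertain forcing' \(\D x{u_N}\). Collecting the \(u_n\) into a state \(\uv\in\UU^{N+1}\) casts the local dynamics as the abstract forced system \(\de t{\uv}=\cL\uv+\rv(t)\), where \(\cL\) is block upper-triangular, indeed block Toeplitz, with diagonal blocks~\(\fL_0\) and super-diagonal blocks~\(\fL_1,\fL_2,\dots\), and \(\rv\) carries the uncertain coupling~\(\D x{u_N}\).

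The next step is the spectral analysis that Assumption~\ref{ass:spec} is designed to supply: \(\fL_0\) has a simple leading eigenvalue zero with eigenfunction~\(V_0(y)\) and adjoint eigenfunction~\(Z_0(y)\) (the one used in Definition~\ref{def:adj} to define~\(c\)), and the remainder of its spectrum lies to the left of some \(-\beta<0\). Because \(\cL\) is block triangular with every diagonal block equal to~\(\fL_0\), one has \(\operatorname{spec}\cL=\operatorname{spec}\fL_0\), so the eigenvalue zero has algebraic multiplicity \(N+1\) and the complementary stable part decays like~\(e^{-\beta t}\). I would then build the \((N+1)\)-dimensional slow subspace as the Jordan chain of~\(\cL\) at the zero eigenvalue—the operator analogue of the generalised eigenvectors \(\vec v_0,\dots,\vec v_4\) of Section~\ref{sec:sse}, now generated by the off-diagonal blocks \(\fL_1,\fL_2,\dots\)—and parametrise it by \(\cv=(c_0,\dots,c_N)\) with \(c_n=\langle Z_0,u_n\rangle\). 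Projecting the reduced flow onto the adjoint~\(Z_0\) collapses the slow dynamics to \(\de t{\cv}=\cA\cv\), with \(\cA\) the Toeplitz matrix whose entries are the reduced couplings of the form \(\langle Z_0,\fL_\ell V\rangle\), namely the matrices~\(A_n\) of~\eqref{eq:vzadef}--\eqref{eq:toeport}.

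With the splitting in hand I would construct, by the standard iteration, a time-dependent near-identity coordinate transform \((\Cv,\Dv)\mapsto(\cv,\dv)\) that exactly separates the forced system into a decoupled stable part \(\de t{\Dv}\) (with \(\Dv=\Ord{e^{-\gamma t}}\) for any \(\gamma<\beta\)) and a slow part \(\de t{\Cv}=\cA\Cv+(\text{forcing})\), where the uncertain input~\(\rv\) is absorbed into nested history convolutions of the form~\eqref{eq:con}. The spectral gap \(\beta>0\) drives the convergence of the iteration and guarantees that \(\Dv=\vec0\) is the emergent, exponentially attracting, exact slow subspace. Finally, as in the derivation of~\eqref{eq:pdeC}, evaluating the exact Taylor polynomial on \(\Dv=\Ord{e^{-\gamma t}}\) gives \(c(x,t)=\sum_n C_n(X,t)\pt{n}+\Ord{e^{-\gamma t}}\); crucially the left-hand side is independent of~\(X\), so differentiating in~\(t\), using \(\de t{\Cv}=\cA\Cv\), and evaluating at \(x=X\) yields the model \pde~\eqref{eq:dcdta}, with the neglected convolutions of~\(\D x{u_N}\) furnishing the quantitative error~\eqref{eq:pderemain}.

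The main obstacle I anticipate is the spectral and semigroup bookkeeping for the \emph{unbounded} operators. For a scalar matrix \(\cL\) the identity \(\operatorname{spec}\cL=\operatorname{spec}\fL_0\) is immediate from triangularity, but when \(\fL_0\) is unbounded—for instance \(\DD y{}\) in Example~\ref{eg:sheardisp}—one must establish via resolvent estimates that \(\cL\) generates a \(C_0\)- (or analytic) semigroup on \(\UU^{N+1}\) and that the block-triangular structure still yields the claimed spectral splitting and exponential dichotomy; only then does the normal-form iteration converge and the decay \(\Dv=\Ord{e^{-\gamma t}}\) hold uniformly. The companion subtlety is handling the uncertain remainder rigorously: controlling \(\D x{u_N}\) needs the assumed \(C^{2N}\) longitudinal regularity, and it is the size of this term—rather than any neglect of `small' quantities—that the error estimate~\eqref{eq:pderemain} must faithfully quantify.
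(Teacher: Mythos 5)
Your proposal is sound in outline and reaches the right destination, but it takes a genuinely different route from the paper's own proof of Proposition~\ref{thm:svpde}. You propose to lift the heat-exchanger treatment of section~\ref{sec:tdnf} verbatim: construct, by iteration, a time-dependent near-identity coordinate transform \((\Cv,\Dv)\mapsto(\cv,\dv)\) that exactly decouples the stable dynamics and absorbs the uncertain forcing into nested history convolutions. The paper instead avoids any normal-form transform in the general linear case (sections~\ref{sec:rlf}--\ref{sec:hruf}): it makes the purely linear splitting \(\uv=\cV\cv+\cW\dv\) with projections \(\llangle\cZ,\cdot\rrangle\), \(\llangle\cP,\cdot\rrangle\), keeps the forcing explicitly in both projected equations~\eqref{eqs:ecde}, solves the stable equation once and for all by variation of constants, \(\dv\simeq e^{\cB t}\star\rvd\), and then obtains the \pde\ by differentiating the projected Taylor identity~\eqref{eq:csumc}, which yields the error as the explicit closed formula~\eqref{eq:pderemain}. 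The paper's route buys directness: nothing has to converge beyond the semigroup bound on~\(e^{\cB t}\), and the error emerges in the exact form cited by the proposition. Your route buys a cleaner dynamical picture (an exactly invariant slow set \(\Dv=\vec 0\) even under forcing), but at a real cost: the convergence/termination of the normal-form iteration for unbounded cross-sectional operators is not established anywhere in the paper (section~\ref{sec:tdnf} only does it concretely for a \(10\times10\) matrix, deferring the general machinery to references), and your error term would come out as nested convolutions rather than literally as~\eqref{eq:pderemain}, so you would still owe an argument that the two expressions agree up to the ignored \(\Ord{e^{-\gamma t}}\) transients. Two smaller inaccuracies: Assumption~\ref{ass:spec} does not posit a simple zero eigenvalue of~\(\fL_0\) but an \(m\)-dimensional centre subspace of eigenvalues with \(|\Re\lambda_j|\leq\alpha\) (your later use of the matrices~\(A_n\) quietly repairs this); and the spectral-gap condition you need is the paper's \(\beta>N\alpha\), not merely \(\beta>0\), since the non-normal block-Toeplitz structure makes the attraction rate \(\gamma\in(\alpha,\beta)\) rather than \(\beta\) itself.
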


\subsection{Rewrite the local field}
\label{sec:rlf}

Choose a cross-section at longitudinal station $X\in\XX$\,.
Then invoke Taylor's Remainder Theorem to write the field~\(u\) in terms of a local polynomial about the cross-section \(x=X\):
\begin{equation}
u(x,y,t)=\sum_{n=0}^{N-1}u_n(X,y,t)\pt{n}
+u_N(X,x,y,t)\pt N\,,
\label{eq:utrtn}
\end{equation}
where \(u_n:=\Dn xnu\) evaluated at the station \(x=X\), except for the last term \(u_N:=\Dn xNu\) which is evaluated at some point \(x=\xey(X,x,y,t)\) that is some function of station~\(X\), longitudinal position~\(x\), cross-section position~\(y\), and time~\(t\). 
By Taylor's Remainder Theorem, the location~\(\xey\) satisfies \(X \lessgtr\xey \lessgtr x\).
However, although the function~\(\xey(X,x,y,t)\) in principle exists, in our modelling \(\xey\)~appears as an implicit uncertain part of the modelling closure.  
The location~\(\xey\) is implicit because it is hidden in the dependency upon~\(x\) of the last factor~\(u_N(X,x,y,t)\), and also implicit in some of the dependence upon \(y\) and~\(t\).
The uncertainty of~\(\xey\) is reflected in uncertainty about where the \(N\)th derivative~\(u_N\) is `located', albeit known to be between \(x\)~and~\(X\).

\paragraph{Derive exact local ODEs}
Let's derive some exact \ode{}s for the the evolution of the coefficients~\(u_n(X,y,t)\) and~\(u_N(X,x,y,t)\).
The \pde~\eqref{eq:upde} invokes various derivatives of the field~\(u\):  the Taylor polynomial gives, after a little rearrangement, the \(\ell\)th~derivative
\begin{equation}
\Dn x\ell u=\sum_{n=0}^{N-\ell}u_{n+\ell}\pt n
+\sum_{n=N-\ell+1}^N\binom{\ell}{N-n}\Dn x{n+\ell-N}{u_N}\pt n\,.
\label{eq:dudxl}
\end{equation}
Consequently, substituting the Taylor polynomial~\eqref{eq:utrtn} into the \pde~\eqref{eq:upde} gives, after rearrangement,
\begin{eqnarray}
\sum_{n=0}^N \D t{u_n}\pt n
&=&\sum_{n=0}^N\left(\sum_{\ell=0}^{N-n}\fL_\ell u_{n+\ell}\right)\pt n
\nonumber\\&&{}
+\sum_{n=0}^{N}\pt n \sum_{k=1}^\infty 
\binom{N-n+k}{N-n}\fL_{N-n+k}\Dn x{k}{u_N}\,.
\qquad\label{eq:sumpde}
\end{eqnarray}
Be careful about details of this and subsequent equation:
\begin{itemize}
\item partial derivatives in \(X\), \(x\), \(y\) and~\(t\) are done keeping constant the other three variables in the foursome;
\item whereas for index \(n=0,\ldots,N-1\) the time derivative \(\D t{u_n}\) is straightforward to interpret, for index \(n=N\) the time derivative implicitly contains effects due to the dependency  upon time of the uncertain locations~\(\xey\);
\item and, lastly, equation~\eqref{eq:sumpde} is exact for all \(x,X\in\XX\) as the Taylor polynomial~\eqref{eq:utrtn} is exact (but regions of rapid variation will have large uncertain remainder terms~\(\Dn xk{u_N}\)).
\end{itemize}
Since equation~\eqref{eq:sumpde} is exact, we differentiate equation~\eqref{eq:sumpde} with respect to~\(x\) up to \(N\)~times, and evaluate each derivative at \(x=X\) to obtain valid exact equations.  
This differentiation of equation~\eqref{eq:sumpde} \(n\)~times and evaluating at \(x=X\) is nearly equivalent to the heuristic of equating coefficients of~\((x-X)^n\)---the difference lies in the `remainder' terms involving extra \(x\)~dependence implied by the uncertain location~\(\xey\).
Proceeding to  differentiate equation~\eqref{eq:sumpde} \(n\)~times with respect to~\(x\) and evaluating as \(x\to X\) gives the set of  \(N+1\)~\ode{}s
\begin{equation}
\D t{u_n}=
\fL_0u_n+\fL_1u_{n+1}+\cdots+\fL_{N-n}u_{N}+r_n
\,,\quad\text{for }n=0,1,\ldots,N\,,
\label{eq:uodes}
\end{equation}
where, after some rearrangement, the \emph{remainder} 
\begin{equation}
r_n(X,y,t):=\sum_{k=1}^\infty \binom{k+N}{N}\fL_{k+N-n}\Dn xk{u_N}\,,
\quad\text{for } n=0,1,\ldots,N\,.
\label{eq:remain}
\end{equation}
The formal infinite sum in~\eqref{eq:remain} typically truncates depending upon the truncation of the \pde~\eqref{eq:upde}.
\begin{example}[shear dispersion continued] 
For example, when \(\fL_\ell=0\) for \(\ell>2\)---a common second-order truncation of the \pde~\eqref{eq:upde}---the remainder
\begin{equation*}
r_n=\begin{cases}
0\,,&n=0,1,\ldots,N-2\,,\\
(N+1)\fL_2u_{Nx}\,, &n=N-1\,,\\
(N+1)\fL_1u_{Nx}
+\frac{(N+1)(N+2)}2\fL_2u_{Nxx}\,, &n=N\,.
\end{cases}
\end{equation*}
In shear dispersion, Example~\ref{eg:sheardisp}, this remainder is specifically
\begin{equation*}
r_n=\begin{cases}
0\,,&n=0,1,\ldots,N-2\,,\\
(N+1)u_{Nx}\,, &n=N-1\,,\\
-(N+1)w(y)u_{Nx}
+\frac{(N+1)(N+2)}2u_{Nxx}\,, &n=N\,.
\end{cases}
\end{equation*}
\end{example}

Equation~\eqref{eq:uodes} forms a system of \ode{}s for the local field derivatives~\(u_n\).
Denote the (meta-)vector of coefficients~\(u_n\) by~\(\uv:=(u_0,u_1,\ldots,u_N)\in\UU^{N+1}\), and similarly for the remainders, \(\vec r:=(r_0,r_1,\ldots,r_N)\in\UU^{N+1}\).
Then rewrite equation~\eqref{eq:uodes} as the apprently `forced' linear system \(\D t{\uv}=\cL\uv+\rv\) for a  block Toeplitz matrix\slash operator~\(\cL:\UU^{N+1}\to\UU^{N+1}\); that is,
\begin{equation}
\D t{\uv}=\underbrace{\begin{bmatrix} 
\fL_0&\fL_1&\fL_2&\cdots&\fL_N
\\ &\fL_0&\fL_1&\ddots&\vdots
\\ &&\fL_0&\ddots&\fL_2
\\ &&&\ddots&\fL_1
\\ &&&&\fL_0
\end{bmatrix}}_{\cL}
\uv+
\underbrace{\begin{bmatrix} r_0\\r_1\\\vdots\\r_{N-1}\\r_N 
\end{bmatrix}}_{\rv} .
\label{eq:odesn}
\end{equation}
This system of \ode{}s~\eqref{eq:odesn} is an exact statement of the dynamics in the locale of the station~\(X\).
System~\eqref{eq:odesn} might appear closed, but it is actually coupled by the derivatives~\(\Dn xk{u_N}\), \(k\geq1\), through the remainder~\eqref{eq:remain}, to the dynamics of neighbouring stations. 
Thus system~\eqref{eq:odesn} is two faced: when viewed globally as the union over all stations~\(X\in\XX\) it is a deterministic autonomous system; but when viewed locally at any one station~\(X\in\XX\) the inter-station coupling implicit in the remainder~\(\vec r\) appears as time dependent~`forcing'.

Our plan is to treat the remainders as `uncertainities' and derive models where the effects of the uncertain remainders can be bounded into the precise error statement~\eqref{eq:pderemain} for the models.
Roughly, since the remainder is linear in \(\Dn xk{u_N}\propto \Dn x{N+k}u\), for slowly varying fields~\(u\) these high derivatives are small and so the errors due to the uncertain remainder will be small.
If the field~\(u\) has any localised internal or boundary layers, then in these locales the errors due to the uncertain remainder will be appropriately large.

\subsection{Model the local `autonomous' system}
\label{sec:mlas}

To analyse the uncertainly `forced' system~\eqref{eq:odesn} we must first understand the autonomous local system 
\begin{equation}
\D t{\uv}=\cL\uv\,.
\label{eq:odesna}
\end{equation}
The invariant subspaces of~\cL\ are a key part of our understanding of the autonomous system~\eqref{eq:odesna}.
The linear operator~\(\cL\) is `block' upper triangular so the spectrum of~\(\cL\) is the same as each `block' on the diagonal, namely that of the eigenproblem \(\fL_0 v=\lambda v\) (subject to any boundary conditions on~\(\partial\YY\) implicit in the symbol~\(\fL_0\)).  
\begin{assumption} \label{ass:spec}
The Banach space~\(\UU\) is the direct sum of two closed \(\fL_0\)-invariant subspaces, \(\EE_c^0\) and~\(\EE_s^0\), and the corresponding restrictions of~\(\fL_0\) generate strongly continuous semigroups \cite[]{Gallay93, Aulbach96}.
Further, assume that the operator~\(\fL_0\) has a discrete spectrum of eigenvalues~\(\lambda_1,\lambda_2,\ldots\) (repeated according to multiplicity) with corresponding and \emph{complete} set of linearly independent (generalised) eigenvectors~\(v_1,v_2,\ldots\)\,. 
We assume the first \(m\)~eigenvalues \(\lambda_1,\ldots,\lambda_m\) of~\(\fL_0\) all have real part satisfying \(|\Re\lambda_j|\leq\alpha\) and hence span the \(m\)-dimensional \emph{centre subspace}~\(\EE_c^0\) \cite[Chapt.~4, e.g.]{Chicone2006}.%
\footnote{Potentially, the centre subspace~\(\EE_c^0\) could be an infinite-D Banach space, appropriate to pattern forming models with spanwise structures, but we leave this potential for future research.}
Also, assume that there is no unstable subspace: that is, all other eigenvalues~\(\lambda_{m+1},\lambda_{m+2},\ldots\) have real part negative and well separated from the centre eigenvalues, namely \(\Re\lambda_j\leq-\beta<-N\alpha\) for \(j=m+1,m+2,\ldots\)\,, and that there is a complete set of corresponding eigenvectors~\(v_{m+1},v_{m+2},\ldots\) which span the stable space~\(\EE_s^0\).
\end{assumption}
\begin{example}[shear dispersion continued] 
Here the cross-channel diffusion eigenproblem is \(\lambda v=\fL_0v=\DD yv\) with  Neumann boundary conditions at \(y=\pm 1\).
Here the Banach space \(\UU=\{v(y)\in H^2[-1,1]\mid \D yv=0 \text{ at }y=\pm1\}\).
This eigenproblem is straightforward giving,  for \(j=1,2,3,\ldots\), eigenfunctions \(v_j=\cos[(j-1)\pi(y+1)/2]\) with corresponding eigenvalues \(\lambda_j=-(j-1)^2\pi^2/4\)\,.
There is one eigenvalue of zero (hence \(\alpha=0\)) corresponding to the 1D centre subspace~\(\EE_c^0\) of fields constant across the channel.
The countably infinite other eigenvalues are all\({}\leq-\beta=-\pi^2/4<0\).
\end{example}

However, much of the following derivation and discussion applies to other cases that may be of interest in other circumstances.  
One may be interested in a centre subspace among both stable and unstable modes, or in a \emph{slow subspace} corresponding to pure zero eigenvalues, or in some other `normal mode' \emph{subcentre subspace} \cite[e.g.]{Lamarque2011}, or in the \emph{centre-unstable subspace}, and so on.
We focus on the centre subspace among otherwise decaying modes as then the centre subspace contains the long term dynamics from general initial conditions (\cite{Robinson96} called it asymptoticly complete).
  
Recall that the operator~\cL\ is block upper triangular with~\(\fL_0\) repeated \((N+1)\)~times on the diagonal blocks.
Thus the spectrum of~\cL\ is the spectrum of~\(\fL_0\) repeated \((N+1)\)~times.
As there are \(m\)~centre eigenvalues for each block~\(\fL_0\) on the diagonal, the operator~\cL\ has an \(m(N+1)\)-dimensional centre subspace, denoted~\(\EE_c^N\). 
Further, all other eigenvalues of~\cL\ have real part negative~(\(\leq-\beta<0\)).
Hence this \(m(N+1)\)-dimensional centre subspace is exponentially quickly attractive from all initial conditions: the longest lasting transients decay roughly like~\(e^{-\beta t}\).   
The evolution on the centre subspace thus forms a long term model of the autonomous system~\eqref{eq:odesna}.

\paragraph{Generalised eigenvectors span the centre subspace}
Because of its block Toeplitz structure, operator~\cL\ is generally non-normal and its eigenspaces involve many generalised eigenvectors.
Typically, the only `pure' centre eigenvectors (corresponding to the centre eigenvalues) of the non-normal~$\cL$ are $\vec v_k:=(v_k,0,\dots,0)\in\UU^{N+1}$ for \(k=1,\ldots,m\).  
Recall that Assumption~\ref{ass:spec} supposes a complete set of linearly independent eigenvectors is~\(\{v_1,\ldots,v_m\}\) (generalised if necessary) to form a basis for the centre subspace of~\(\fL_0\).
In applications, these \(m\)~eigenvectors correspond to well established neutral or oscillatory modes of the cross-sectional dynamics at station \(x=X\).  
The difference here is that we now explore longitudinal structures, via the generalised eigenvectors, without invoking the scaling heuristics of traditional slowly varying methodologies. 

The other centre eigenvectors of~\cL\ are (typically) generalised eigenvectors which straightforward linear algebra finds will form a toeplitz-like structure.
\footnote{Perhaps one reason why a rigorous justification of models `slowly varying' in space is difficult is that such modelling needs to invoke generalised eigenmodes and their dynamics.  
Physically, we need generalised eigenmodes to cope with, for example, initial conditions that transiently `feed' into organised structures before cross-sectional dissipation fully acts.}  
\begin{definition} \label{def:adj}
  Define an inner product \(\langle,\rangle:\UU\times\UU\to\RR\)\,, so there exists a corresponding adjoint~\(\tr\fL_0\).
  Also use this inner product symbol \(\langle,\rangle:\UU^{1\times m}\times\UU^{1\times m}\to\RR^{m\times m}\) to mean \(\langle[z_j],[v_j]\rangle:=\begin{bmatrix} \langle z_i,v_j \rangle \end{bmatrix}\) (a matrix of inner products).
  Form the `matrix'  \(V_0:=\begin{bmatrix} v_1&\cdots&v_m \end{bmatrix}\in\UU^{1\times m}\) of centre eigenvectors of~\(\fL_0\).
  Then elementary algebra assures us that there exists a projection `matrix' of left\slash adjoint eigenvectors~\(Z_0\in\UU^{1\times m}\) orthogonal to~\(V_0\), and there exists a matrix~\( A_0\in\RR^{m\times m}\), with eigenvalues~\(\{\lambda_1,\ldots,\lambda_m\}\), such that
  \begin{equation}
\fL_0V_0=V_0 A_0\,,\quad
\tr\fL_0Z_0=Z_0\tr A_0\,,\quad
\langle Z_0,V_0\rangle=I_m
\label{eq:vzadef}
\end{equation}
\end{definition}

\begin{example}[shear dispersion continued] 
Define the natural inner product to be the cross-channel average \(\langle z,v\rangle=\frac1{2}\int_{-1}^1 zv\,dy\).  Then here~\(\fL_0=\DD y{}\) is self-adjoint in this inner product, and with left and right centre (slow) eigenfunctions \(z_1=v_1=1\) corresponding to eigenvalue \(\lambda_1=0\)\,.  Consequently, \(Z_0=V_0=1\), and \(A_0=0\).
\end{example}

\paragraph{Recursively define generalised eigenvectors}
After solving the basic eigenproblem~\eqref{eq:vzadef} for~\( A_0\), \(V_0\) and~\(Z_0\), now recursively solve the following sequence of problems for \( A_n\in\RR^{m\times m}\)~and~\(V_n\in\UU^{1\times m}\), \(n=1,2,\ldots,N\), 
\begin{subequations}\label{eqs:toep}%
\begin{eqnarray}&&
 A_n:=
 \sum_{k=1}^n\langle Z_0,\fL_k V_{n-k}\rangle,
\label{eq:toeplam}
\\&&
\fL_0V_n-V_n A_0=
-\sum_{k=1}^n\fL_kV_{n-k}
+\sum_{k=1}^nV_{n-k} A_k\,,
\label{eq:topevn}
\\&&\langle Z_0,V_n\rangle=0_m\,.
\label{eq:toeport}
\end{eqnarray}%
\end{subequations}
In applications, the \(m\)~columns of each of these~\(V_n\) contain information about the interactions between longitudinal gradients of the field~\(u\), as felt through the mechanisms encoded in \(\fL_1,\fL_2,\ldots\), and the cross-sectional out-of-equilibrium dynamics encoded in~\(\fL_0\).

\begin{example}[shear dispersion continued] 
Via some tedious algebra, here the recursion~\eqref{eqs:toep} gives the well established structures
\begin{eqnarray*}
A_1 &=& 
-\Pe
,\\
V_1 &=& \Pe (
-\sfrac{7}{120}
+\sfrac{1}{4} y^2
-\sfrac{1}{8} y^4)
,\\
A_2 &=& 1
+\sfrac{2}{105} \Pe^2
,\\
V_2 &=& \Pe^2 (
-\sfrac{29}{201600}
-\sfrac{17}{3360} y^2
+\sfrac{17}{960} y^4
-\sfrac{7}{480} y^6
+\sfrac{3}{896} y^8)
,\\
A_3 &=& \sfrac{4}{17325} \Pe^3
,
\end{eqnarray*}
and so on.  Then, as the derivation of equation~\eqref{eq:ssmod} asserts, and choosing truncation \(N=3\), the evolution on the local slow subspace becomes 
\begin{equation*}
\D t{}\begin{bmatrix} c_0\\c_1\\c_2\\c_3 \end{bmatrix}
=\begin{bmatrix} 
0&-\Pe&1+\sfrac{2}{105} \Pe^2&\sfrac{4}{17325} \Pe^3 \\
0&0&-\Pe&1+\sfrac{2}{105} \Pe^2\\
0&0&0&-\Pe\\
0&0&0&0
\end{bmatrix}
\begin{bmatrix} c_0\\c_1\\c_2\\c_3 \end{bmatrix}
\end{equation*}
Then the next section proves that, in essence, the first line of this evolution supports the slowly varying model
\begin{equation*}
\D tc\approx 
-\Pe\D xc+(1+\sfrac{2}{105} \Pe^2)\DD xc
+\sfrac{4}{17325} \Pe^3\Dn x3c
\end{equation*}
for the long time dispersion of material along the channel.
\end{example}

\begin{lemma} 
  The recursive equation~\eqref{eq:topevn} is solvable for \(n=1,2,\ldots,N\).
\end{lemma}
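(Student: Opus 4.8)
The plan is to read the recursive equation~\eqref{eq:topevn} as a generalised Sylvester equation \(\fL_0 V_n - V_n A_0 = G_n\) for the unknown \(V_n\in\UU^{1\times m}\), whose right-hand side \(G_n:=-\sum_{k=1}^n\fL_k V_{n-k}+\sum_{k=1}^n V_{n-k}A_k\) depends only on the previously constructed \(V_0,\dots,V_{n-1}\) and on \(A_1,\dots,A_n\), the last of which is already fixed by~\eqref{eq:toeplam}. I would argue by induction on~\(n\), the inductive hypothesis being that \(V_0,\dots,V_{n-1}\) exist and obey the normalisation~\eqref{eq:toeport}, that is \(\langle Z_0,V_k\rangle=0_m\) for \(1\le k\le n-1\) while \(\langle Z_0,V_0\rangle=I_m\) by~\eqref{eq:vzadef}.

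The engine is the spectral splitting \(\UU=\EE_c^0\oplus\EE_s^0\) of Assumption~\ref{ass:spec}, with the centre projection written as \(P_c u=V_0\langle Z_0,u\rangle\) and \(P_s=I-P_c\), both subspaces being \(\fL_0\)-invariant. First I would apply \(\langle Z_0,\cdot\rangle\) to the Sylvester equation; using the adjoint relation \(\fL_0^\dagger Z_0=Z_0 A_0^\dagger\) and \(\langle Z_0,V_0\rangle=I_m\) from~\eqref{eq:vzadef}, the left-hand side collapses to the commutator \(A_0\langle Z_0,V_n\rangle-\langle Z_0,V_n\rangle A_0\), so the centre projection reads \([A_0,\langle Z_0,V_n\rangle]=\langle Z_0,G_n\rangle\). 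Imposing the required normalisation \(\langle Z_0,V_n\rangle=0_m\) thus demands the Fredholm-type consistency condition \(\langle Z_0,G_n\rangle=0\). The crux, and the reason~\eqref{eq:toeplam} is defined as it is, is that this holds automatically: in \(\sum_{k=1}^n\langle Z_0,V_{n-k}\rangle A_k\) the inductive normalisation annihilates every summand with \(k<n\) and leaves only \(\langle Z_0,V_0\rangle A_n=A_n\), while \(-\sum_{k=1}^n\langle Z_0,\fL_k V_{n-k}\rangle=-A_n\) by the very definition~\eqref{eq:toeplam}, so the two cancel. Consequently the centre component of \(V_n\) is forced to vanish, \(P_c V_n=0\), and \(G_n\) lies entirely in \(\EE_s^0\) columnwise.

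It then remains to solve the reduced equation \(\fL_0|_{\EE_s^0}V_n^s-V_n^s A_0=G_n\) for the stable component, and here the spectral gap does the work. The associated Sylvester operator \(V\mapsto \fL_0|_{\EE_s^0}V-V A_0\) has spectrum contained in \(\{\mu-\lambda:\mu\in\sigma(\fL_0|_{\EE_s^0}),\ \lambda\in\{\lambda_1,\dots,\lambda_m\}\}\); by Assumption~\ref{ass:spec} every such difference has real part at most \(-\beta+\alpha<0\), so \(0\) is not in its spectrum and the unique solution is given by the convergent representation \(V_n^s=\int_0^\infty e^{\fL_0|_{\EE_s^0}\,t}\,G_n\,e^{-A_0 t}\,dt\), the integrand decaying like \(e^{-(\beta-\alpha)t}\) by the semigroup bound on \(\EE_s^0\) and the eigenvalue bound on \(A_0\). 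Equivalently, expanding \(G_n\) and \(V_n^s\) in the complete stable eigenbasis \(\{v_j\}_{j>m}\) reduces the problem to the finite linear systems \(p_j^\top(\lambda_j I-A_0)=g_j^\top\), each uniquely solvable since \(\lambda_j\notin\sigma(A_0)\). Setting \(V_n:=V_n^s\) then satisfies both~\eqref{eq:topevn} and the normalisation~\eqref{eq:toeport}, and it is unique because any two solutions differ by an element of the kernel \(\{V_0B:[B,A_0]=0\}\), which is annihilated by \(\langle Z_0,\cdot\rangle\) and so must vanish. The main obstacle I anticipate is precisely this stable Sylvester step: confirming that the spectral separation delivers a bounded, convergent solution in a general Banach space, in particular controlling the semigroup decay on \(\EE_s^0\) and absorbing any polynomial growth from generalised eigenvectors (non-normality) of \(A_0\) into the exponential gap \(\beta>\alpha\).
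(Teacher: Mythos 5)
Your proposal is correct, and its crux is exactly the paper's proof: project \eqref{eq:topevn} with \(\langle Z_0,\cdot\rangle\), use \(\tr\fL_0 Z_0=Z_0\tr A_0\) and the normalisation \eqref{eq:toeport} to make the left-hand side vanish, and observe that the right-hand side projection cancels to \(0_m\) precisely because of the definition \eqref{eq:toeplam} of \(A_n\). Where you differ is that the paper stops there: it verifies only this Fredholm-type consistency condition and leaves the actual construction of \(V_n\) implicit. You go on to split \(\UU=\EE_c^0\oplus\EE_s^0\), note that consistency places \(G_n\) columnwise in the stable subspace, and solve the reduced Sylvester equation there using invertibility of \(V\mapsto\fL_0|_{\EE_s^0}V-VA_0\), whose spectrum has real part at most \(-\beta+\alpha<0\) by Assumption~\ref{ass:spec}; you also obtain uniqueness under \eqref{eq:toeport}, since kernel elements \(V_0B\) with \([B,A_0]=0\) are annihilated by the normalisation. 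This buys a genuinely complete existence-and-uniqueness argument, whereas the paper's proof, read literally, establishes only that the orthogonality constraint and the equation are compatible. One small slip: your integral representation needs a minus sign, \(V_n^s=-\int_0^\infty e^{\fL_0|_{\EE_s^0}t}\,G_n\,e^{-A_0t}\,dt\), because \(\int_0^\infty \tfrac{d}{dt}\bigl(e^{\fL_0 t}G_n e^{-A_0 t}\bigr)dt=-G_n\); the invertibility conclusion it supports is unaffected.
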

\begin{proof} 
   By the choice~\eqref{eq:toeplam}, as seen by considering \(\langle Z_0,\eqref{eq:topevn}\rangle\), the left-hand side of~\eqref{eq:topevn}, using the orthogonality~\eqref{eq:toeport}, becomes
\begin{eqnarray*}&&
\langle Z_0,\fL_0V_n\rangle-\langle Z_0,V_n A_0\rangle
=\langle \tr\fL_0Z_0,V_n\rangle-\langle Z_0,V_n\rangle A_0
\\&&{}
=\langle Z_0\tr A_0,V_n\rangle-0_m A_0
= A_0\langle Z_0,V_n\rangle
= A_00_m
=0_m\,;
\end{eqnarray*}
whereas the right-hand side, also using the orthogonality~\eqref{eq:toeport}, becomes
\begin{eqnarray*}&&
-\sum_{k=1}^n\langle Z_0,\fL_kV_{n-k}\rangle
+\sum_{k=1}^{n-1}\langle Z_0,V_{n-k} A_k\rangle+\langle Z_0,V_0 A_n\rangle
\\&&{}
=-\sum_{k=1}^n\langle Z_0,\fL_kV_{n-k}\rangle
+\sum_{k=1}^{n-1}\langle Z_0,V_{n-k}\rangle A_k+\langle Z_0,V_0\rangle A_n
\\&&{}
=-\sum_{k=1}^n\langle Z_0,\fL_kV_{n-k}\rangle
+\sum_{k=1}^{n-1}0_m A_k+I_m A_n
\\&&{}
=-\sum_{k=1}^n\langle Z_0,\fL_kV_{n-k}\rangle
+ A_n
=0_m
\end{eqnarray*}
by the choice~\eqref{eq:toeplam}.
\end{proof}

\begin{lemma}
For the homogeneous system~\eqref{eq:odesna}, a basis for the centre subspace is the collective columns of 
\begin{equation}
\vec V_n:=(V_n,\ldots,V_0,0_m,\ldots,0_m)\in\UU^{(N+1)\times m},
\quad n=0,1,\ldots,N\,.
\label{eq:genevec}
\end{equation}
\end{lemma}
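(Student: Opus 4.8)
The plan is to exhibit the columns of $\vec V_0,\ldots,\vec V_N$ as spanning an $\cL$-invariant subspace $W\subseteq\UU^{N+1}$ on which $\cL$ acts with spectrum exactly the $m$~centre eigenvalues, and to show $W$ has the full dimension $m(N+1)$; since the centre subspace~$\EE_c^N$ is the unique $\cL$-invariant subspace of that dimension carrying only centre spectrum, this forces $W=\EE_c^N$. The whole argument pivots on a single identity,
\[
\cL\vec V_n=\sum_{k=0}^{n}\vec V_{n-k}A_k,\qquad n=0,1,\ldots,N.
\]

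First I would establish this identity by matching block components. Writing $(\vec V_n)_i=V_{n-i}$ with the convention $V_j:=0$ for $j<0$, the $j$th block of $\cL\vec V_n$ (read off from the block-Toeplitz matrix~\eqref{eq:odesn}) is $\sum_{\ell=0}^{N-j}\fL_\ell V_{n-j-\ell}$, which for $j\le n$ collapses to $\sum_{\ell=0}^{n-j}\fL_\ell V_{n-j-\ell}$ and vanishes for $j>n$. The recursion~\eqref{eq:topevn}, rearranged into $\sum_{\ell=0}^{p}\fL_\ell V_{p-\ell}=\sum_{k=0}^{p}V_{p-k}A_k$ with the base case $p=0$ supplied by $\fL_0V_0=V_0A_0$ from~\eqref{eq:vzadef}, converts this block into $\sum_{k=0}^{n-j}V_{n-j-k}A_k$; comparing with the $j$th block of $\sum_{k=0}^{n}\vec V_{n-k}A_k$ yields the claimed identity. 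This index bookkeeping is the main technical obstacle: one must check that the truncation at $N$ never discards a needed term and that the $p=0$ base case is treated separately from the recursion (which the previous lemma only guarantees solvable for $n\ge1$).

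The identity then does most of the remaining work. Collecting the basis columns into $\vec V:=\begin{bmatrix}\vec V_0&\cdots&\vec V_N\end{bmatrix}$, it reads $\cL\vec V=\vec V M$ with $M$ a block-triangular Toeplitz matrix carrying $A_0$ on every diagonal block; hence $W:=\operatorname{span}(\text{columns of }\vec V)$ is $\cL$-invariant, and the spectrum of $\cL|_W$ equals the spectrum of $M$, namely that of $A_0$ repeated $N+1$ times, which by Definition~\ref{def:adj} is exactly the centre eigenvalues $\{\lambda_1,\ldots,\lambda_m\}$. Next I would verify linear independence of the columns from the triangular structure: projecting a vanishing combination $\sum_n\vec V_n c_n=0$ (with $c_n\in\RR^m$) onto block~$N$ gives $V_0c_N=0$, whence $c_N=0$ since the columns of $V_0$ are the independent centre eigenvectors; descending through blocks $N-1,\ldots,0$ forces $c_{N-1}=\cdots=c_0=0$. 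Thus $\dim W=m(N+1)$.

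Finally I would identify $W$ with $\EE_c^N$. The spectral splitting $\UU^{N+1}=\EE_c^N\oplus\EE_s^N$ of Assumption~\ref{ass:spec} provides a bounded projection commuting with~$\cL$; the stable projection $P_s$ restricted to $W$ intertwines $\cL|_W$ (centre spectrum) with $\cL|_{\EE_s^N}$ (spectrum $\le-\beta$), and since these spectra are disjoint the intertwiner vanishes, giving $W\subseteq\ker P_s=\EE_c^N$. As $\dim W=m(N+1)=\dim\EE_c^N$, equality follows, and so the columns of $\vec V_0,\ldots,\vec V_N$ form a basis of the centre subspace. In the infinite-dimensional setting this last step is legitimate precisely because $\EE_c^N$ is the finite-dimensional spectral subspace of Assumption~\ref{ass:spec}, so the disjoint-spectra vanishing argument applies.
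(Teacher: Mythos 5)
Your proof is correct and follows essentially the same route as the paper's: the key identity $\cL\cV=\cV\cA$ obtained from the recursion~\eqref{eq:topevn} (with base case~\eqref{eq:vzadef}), the spectrum read off from the block Toeplitz structure of~$\cA$ as the centre eigenvalues of~$A_0$ repeated $N+1$ times, linear independence from the triangular structure and independence of the columns of~$V_0$, and a final dimension count against the $m(N+1)$ centre eigenvalues of~$\cL$. The only difference is that you make the final identification of the span with~$\EE_c^N$ rigorous via the disjoint-spectra intertwining argument, where the paper simply matches the number of columns to the multiplicity of the centre spectrum---a worthwhile tightening, but not a different approach.
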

\begin{proof} 
  First prove the space spanned by~\(\{\vec V_0,\vec V_1,\ldots,\vec V_N\}\) is invariant. 
Define two important block Toeplitz `matrices': \(\cV:=\begin{bmatrix} \vec V_0&\vec V_1&\cdots&\vec V_N \end{bmatrix}\in\UU^{(N+1)\times m(N+1)}\), that is, 
\begin{equation}
\cV:=\begin{bmatrix} V_0&V_1&V_2&\cdots&V_N
\\0&V_0&V_1&\ddots&\vdots
\\0&0&V_0&\ddots&V_{2}
\\\vdots&\ddots& \ddots& \ddots&V_1
\\0& \cdots &0&0&V_{0}
\end{bmatrix},
\quad\text{and }
\cA:=\begin{bmatrix}  A_0& A_1& A_2&\cdots& A_N
\\0_m& A_0& A_1&\ddots&\vdots
\\0_m&0_m& A_0&\ddots& A_{2}
\\\vdots&\ddots& \ddots& \ddots& A_1
\\0_m& \cdots &0_m&0_m& A_{0}
\end{bmatrix}.
\label{eq:vamat}
\end{equation}
Consider the \(n\)th~block of~\(\cL\vec V_\ell\) (\(n=0,\ldots,N\)):
it is  \(\sum_{k=0}^{\ell-n}\fL_kV_{\ell-n-k}\) 
which by the recursion~\eqref{eq:topevn} is \(\sum_{k=0}^{\ell-n}V_{\ell-n-k} A_k\), and which in turn is the \((n,\ell)\)th~block of~\(\cV\cA\).
Hence \(\cL\cV\)~is in the space spanned by the columns of~\(\cV\).
Second, moreover, \(\cL\cV=\cV\cA\) so that the eigenvalues corresponding to the eigenspace spanned by~\cV\ are those of~\cA, which by its block Toeplitz structure are the centre eigenvalues of~\(A_0\) repeated \((N+1)\)~times.
Third, the columns of~\cV\ are linearly independent by its block Toeplitz form and the linear independence of the columns of~\(V_0\).
Lastly, there are \(m(N+1)\)~columns in~\cV\ to match the required number of centre eigenvalues of~\cL\ (counted according to multiplicity).
Denote the centre subspace of~\cL, spanned by columns of~\cV, by~\(\EE_c^N\).
\end{proof}

\paragraph{Parametrise evolution on the centre subspace}
To parametrise locations on the centre subspace~$\EE_c^N$ we use the columns of~\cV.  
Using variable name~\(c\) for `centre', let \(c_n\in\RR^m\) for \(n=0,\ldots,N\), and \(\cv:=(c_0,\ldots,c_N)\in\RR^{m(N+1)}\).
Then parametrise positions on the centre subspace as \(\uv=\cV\cv\).
In applications, the variables~\(c_n\) typically measure the \(n\)th~derivative in the longitudinal direction of the macroscopic components in~\(V_0\) at station~\(X\) at time~\(t\).

Evolution on the centre subspace~\(\EE_c^N\) is then characterised by evolving~\(\cv(t)\).  
From the autonomous system~\eqref{eq:odesna},
\(\cV\D t{\cv} =\D t{\uv}
=\cL\uv =\cL\cV\cv =\cV\cA\cv
\)\,.
Since the columns of~\cV\ are linearly independent it follows that
\begin{equation}
\D t{\cv}=\cA{\cv}\,.
\label{eq:ssmod}
\end{equation}
which then governs the evolution~\eqref{eq:odesna} within the centre subspace~\(\EE_c^N\). 
This system of \ode{}s forms a long term model of the dynamics of the autonomous~\eqref{eq:odesna}. 
These \ode{}s have no approximation, only neglect of transients: by the decay of the stable modes we know that all solutions of the autonomous~\eqref{eq:odesna} approach solutions of~\eqref{eq:ssmod} exponentially quickly.  
The decay is like~\(e^{-\gamma t}\) for any rate \(\gamma\in(\alpha,\beta)\) because of possible effects due to the generalised eigenvectors of the non-normal~\cL.

\subsection{How do we project uncertain forcing?}
\label{sec:hruf}

Our aim is not to model the autonomous~\eqref{eq:odesna}, but the exact system~\eqref{eq:odesn} with its  uncertain `forcing' by the coupling~\(\rv\) with neighbouring locales.
Let's proceed to project the uncertain forcing as if it was arbitrary.

\paragraph{Change basis to centre and stable variables}
Write \(\uv=\cV\cv+\cW\dv\) where the centre variables~\(\cv\) parametrise the centre subspace, and the variables~\(\dv\) parametrise the stable subspace.
Just like~\cV, the (block Toeplitz) operator~\cW\ is associated with the following properties:
\begin{itemize}
\item \cW\ spans the stable subspace~\(\EE_s^N\) of~\cL;
\item there exists a (block Toeplitz) operator~\(\cB:\EE_s^N\to\EE_s^N\) such that \(\cL\cW=\cW\cB\) and all eigenvalues of~\cB\ have real part\({}\leq-\beta\);
\item there exist projection operators~\cP\ and~\cZ\ such that \(\llangle \cP,\cW\rrangle=I\), \(\llangle \cP,\cV\rrangle=0\), \(\llangle \cZ,\cW\rrangle=0\), and \(\llangle \cZ,\cV\rrangle=I\).
\end{itemize}
Then writing the `forced' system~\eqref{eq:odesn} in separated variables \(\cv(X,t)\) and~\(\dv(X,t)\), by projecting with \(\llangle \cZ,\rrangle\) and~\(\llangle \cP,\rrangle\) respectively, we deduce
\begin{subequations}\label{eqs:ecde}%
\begin{eqnarray}&&
\D t{\cv}=\cA\cv+\Rv
\quad\text{where }\Rv=\llangle\cZ,\rv\rrangle\in\RR^{m(N+1)},
\label{eq:ecdec}
\\&&
\D t{\dv}=\cB\dv+\rvd
\quad\text{where }\rvd=\llangle\cP,\rv\rrangle.
\label{eq:ecdeq}
\end{eqnarray}%
\end{subequations}

Now consider the stable variables.
Since \cL\ generates a continuous semigroup, so does its restriction~\cB, and so we rewrite~\eqref{eq:ecdeq} in the integral equation form
\begin{equation}
\dv(t)
=e^{\cB t}\dv(0)+\int_0^t e^{\cB (t-s)}\rvd(s)\,ds
=e^{\cB t}\dv(0)+e^{\cB t}\star\rvd,
\label{eq:qfntime}
\end{equation}
as convolutions \(f(t)\star g(t)=\int_0^tf(t-s)g(s)\,ds\)\,.  Since all eigenvalues of~\cB\ have real part\({}\leq-\beta\), then for some decay rate~\(\gamma\in(\alpha,\beta)\)
\begin{equation}
\dv(t)=e^{\cB t}\star\rvd+\Ord{e^{-\gamma t}},
\quad\text{written}\quad
\dv(t)\simeq e^{\cB t}\star\rvd
\label{eq:rforcedq}
\end{equation}
upon invoking the following definition that \(f\simeq g\) to mean that \(f\)~and~\(g\) are equal apart from ignored exponentially rapid decaying transients. 
\begin{definition}
Define \(f(t)\simeq g(t)\) to mean \(f-g=\Ord{e^{-\gamma t}}\) as \(t\to\infty\) for some exponential rate \(\alpha<\gamma <\beta\)\,.
\end{definition}  
Consequently, equation~\eqref{eq:rforcedq} determines how the local stable variables~\(\dv\) are forced by the coupling with neighbouring stations via the remainder effects in~\(\rvd\).

\paragraph{The centre subspace dynamics with remainder}
Define the amplitude field of slowly varying solutions by the projection
\begin{equation}
c(x,t):=\langle Z_0,u(x,y,t)\rangle,
\label{eq:cssf}
\end{equation}
which as yet is distinct from the local centre variables~\(\cv\).
In order to discover how the amplitude field~\(c(x,t)\) evolves, our task is to now relate the field~\(c(x,t)\) to the local centre subspace variables~\(\cv\).
Recall from~\eqref{eq:utrtn} that expanded about the station~\(X\) the original field
\begin{equation*}
u(x,y,t)=u_0(X,y,t)+u_1(X,y,t)\pt1+\cdots+u_N(X,x,y,t)\pt N\,.
\end{equation*}
By projecting this expression, the centre field
\begin{equation}
c(x,t)=\sum_{n=0}^{N-1}\langle Z_0,u_n(X,y,t) \rangle\pt{n}
+\langle Z_0,u_N(X,x,y,t) \rangle\pt N\,.
\label{eq:csumzu}
\end{equation}
But \(\uv=\cV\cv+\cW\dv \simeq \cV\cv+\cW e^{\cB t}\star\rvd\).
Since equation~\eqref{eq:toeport} sets \(\langle Z_0,V_n\rangle=0\) for all \(n\neq0\), consequently
\begin{equation*}
\langle Z_0,u_n\rangle \simeq  
c_n(X,t)+\langle Z_0,\cW_{n:} e^{\cB t}\star\rvd\rangle
\quad\text{for }n=0,1,\ldots,N\,,
\end{equation*}
where \(\cW_{n:}\)~denotes the \(n\)th block-row of operator~\cW.
Thus equation~\eqref{eq:csumzu} becomes 
\begin{equation}
c(x,t) \simeq \sum_{n=0}^{N}\left[c_n(X,t)
+\langle Z_0,\cW_{n:} e^{\cB t}\star\rvd\rangle\right]\pt{n}\,.
\label{eq:csumc}
\end{equation}
This relates the centre field to the local centre variables: there is no approximation except the neglect of rapid transients.

Identity~\eqref{eq:csumc} together with evolution~\eqref{eq:ecdec} leads to the \pde\ governing the centre field.
Differentiating~\eqref{eq:csumc} \(\nu\)~times with respect to~\(x\), keeping constant time~\(t\) and station~\(X\), gives
\begin{eqnarray}
\left.\Dn x\nu c\right|_{x=X} &\simeq& 
\left.\sum_{n=\nu}^{N}\left[c_{n}
+\langle Z_0,\cW_{n:} e^{\cB t}\star\rvd\rangle\right]\pt{(n-\nu)}
\right|_{x=X}
\nonumber\\&&{}
=c_{\nu}
+\langle Z_0,\cW_{\nu:} e^{\cB t}\star\rvd\rangle
\quad\text{at }x=X\,.
\label{eq:rccnu}
\end{eqnarray}
Derivatives of the remainder factor~\(\rvd\) do not occur as the remainder is independent of~\(x\) by the necessary evaluation at \(x=X\) in its Definition~\eqref{eq:remain}:
sufficient information about spatial gradients are already encoded into the remainder through the derivatives \(\Dn xk{u_N}\) that appear in~\eqref{eq:remain}.
Consider the \(\nu=0\) instance of the identity~\eqref{eq:rccnu} and differentiate with respect to time to give
\begin{eqnarray}
\D tc&\simeq&\D t{c_0}
+\D t{}\langle Z_0,\cW_{0:} e^{\cB t}\star\rvd\rangle
\nonumber\\
&&{}=\cA_{0:}\cv+r_0
+\langle Z_0,\cW_{0:} \D t{}[e^{\cB t}\star\rvd]\rangle
\quad\text{[by~\eqref{eq:ecdec}]}
\nonumber\\
&&{}=\sum_{n=0}^NA_{n}c_n+r_0
+\langle Z_0,\cW_{0:} \cB e^{\cB t}\star\rvd\rangle
+\langle Z_0,\cW_{0:} \rvd\rangle.
\quad\text{[by \eqref{eq:qfntime}]}
\qquad\label{eq:cct}
\end{eqnarray}
From the identity~\eqref{eq:rccnu}, replace the local centre variables~\(c_n\) in favour of spatial gradients of the amplitude field~\(c\) to derive from~\eqref{eq:cct} that the amplitude field must satisfy the \pde\
\begin{equation}
\D tc\simeq\sum_{n=0}^N A_n\Dn xnc +\rho \,,
\label{eq:cpde}
\end{equation}
where the remainder term
\begin{equation}
\rho=r_0
+\langle Z_0,\cW_{0:} \cB e^{\cB t}\star\rvd\rangle
+\langle Z_0,\cW_{0:} \rvd\rangle
-\sum_{n=0}^NA_n\langle Z_0,\cW_{n:} e^{\cB t}\star\rvd\rangle.
\label{eq:pderemain}
\end{equation}
The \pde~\eqref{eq:cpde} applies at all stations~\(X\) in the domain~\(\XX\).
Strictly, the `\pde'~\eqref{eq:cpde} is actually a coupled differential-integral equation: the dynamics at each station~\(X\) being coupled by the gradients and their history convolution integrals occurring within the remainder~\eqref{eq:pderemain}.
But when the uncertain remainder term is negligible, as in slowly varying regimes where the remainder~\(\rho\) is~\Ord{\Dn x{N+1}u}, then equation~\eqref{eq:cpde} reduces to the longitudinal \pde\ closure~\eqref{eq:dcdta}.  
This completes the argument that establishes Proposition~\ref{thm:svpde}.

\subsection{Application: pattern diffusion in space}
\label{sec:egpes}

The Swift--Hohenberg equation is a well known, prototypical \pde\ for studying issues in pattern formation and evolution: nondimensionally it is 
\(\D tu=ru-(1+\delsq)^2u-u^3\) \cite[e.g.]{Cross93}.
Here just consider the long time evolution of small amplitude solutions of the Swift--Hohenberg system exactly at the borderline of instability and in one space dimension: a field~\(u(\ix,t)\) satisfies the linear \pde
\begin{equation}
\D tu=-(1+\partial_{\ix\ix})^2u
\label{eq:shpdem}
\end{equation}
on a domain~\(\XX\) of large extent in~\(\ix\).
The slow marginal modes are \(u\propto e^{\pm i\ix}\).
However, there are an infinity of modes arbitrarily close to marginal: the modes \(u\propto e^{\pm ik\ix}\) with spatial wavenumbers~\(k\) near one.
This infinity of modes means that physically we see the marginal modes~\(e^{\pm i\ix}\) being modulated in space over large distances.
The modelling challenge for this subsection is to establish a new approach that rigorously models the dynamics of these modulation patterns.

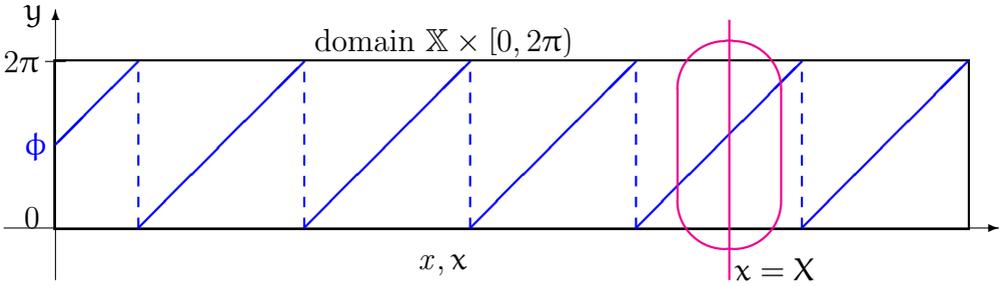
\begin{figure}
\centering
\setlength{\unitlength}{0.01\linewidth}
\begin{picture}(96,26)
\put(0,5){\vector(1,0){96}}
\put(40,1){\(\ix,\fx\)}
\put(5,0){\vector(0,1){26}}
\put(2,25){\(y\)}
\put(0,20){\(2\pi\)}\put(4,21){\line(1,0){2}}
\put(2,5){\(0\)}
\put(30,22){domain \(\XX\times[0,2\pi)\)}
\thicklines
\put(5,5){\framebox(88,16){}}
\color{blue}
\multiput(13,5)(16,0)5{
  \put(0,0){\line(1,1){16}}
  \multiput(0,0)(0,2)8{\line(0,1){1}}
  }
  \put(13,21){\line(-1,-1){8}}
  \put(2,12){\(\phi\)}
\color{magenta}
\put(70,0){\line(0,1){25}\,\color{black}\(\fx=X\)}
\put(70,13){\oval(10,20)}
\end{picture}
\caption{cylindrical domain of the embedding \pde~\eqref{eq:shembed} for field \(\fu(\fx,y,t)\).  Obtain solutions of the linear marginal Swift--Hohenberg \pde~\eqref{eq:shpdem} on the blue line as \(u(\ix,t)=\fu(\ix,\ix+\phi,t)\) for any constant phase~\(\phi\).}
\label{fig:pattensemble}
\end{figure}

Let's embed the \pde~\eqref{eq:shpdem} in a larger problem.
As indicated schematically in Figure~\ref{fig:pattensemble},
and in terms of a notionally new longitudinal variable~\(\fx\) and new phase variable~\(y\), consider a new field~\(\fu (\fx,y,t)\) satisfying the \pde
\begin{equation}
\D t\fu =-(1+\partial_{yy}+2\partial_{y\fx}+\partial_{\fx\fx})^2\fu ,
\quad \text{for }(\fx,y)\in \XX\times[0,2\pi),
\label{eq:shembed}
\end{equation}
where the field~\(\fu \) is \(2\pi\)-periodic in~\(y\).
Given any solution~\(\fu \) of the \pde~\eqref{eq:shembed}, elementary calculus shows that, for any chosen fixed phase~\(\phi\) and using that \(\fu \)~is \(2\pi\)-periodic in~\(y\), the field \(u(\ix,t)=\fu (\ix,\ix+\phi,t)\) is a solution of the linear marginal Swift--Hohenberg \pde~\eqref{eq:shpdem}, also indicated in Figure~\ref{fig:pattensemble}.
Thus modelling of the dynamics of the \pde~\eqref{eq:shembed} immediately leads to models for the dynamics of the linear marginal Swift--Hohenberg \pde~\eqref{eq:shpdem}.
Crucially, the rigorous embedding here replaces the heuristic multiple space and time scale assumptions traditionally employed in asymptotic analysis \cite[e.g.]{Cross93, Vandyke87}.

The techniques and results of this section apply to the embedding \pde~\eqref{eq:shembed}.
The \pde~\eqref{eq:shembed} is of the form of the general \pde~\eqref{eq:upde} with
\begin{eqnarray}&&
\fL_0=-(1+\partial_{yy})^2,\quad
\fL_1=-4(\partial_y+\partial_{yyy}),\quad
\fL_2=-2-6\partial_{yy},
\nonumber\\&&
\fL_3=-4\partial_y,\quad
\fL_4=-1\,.
\label{eq:sheln}
\end{eqnarray}
The basic eigenproblem at a station \(\fx=X\) is then \(\lambda v=\cL_0 v=-(1+\partial_{yy})^2v\).
Using the \(2\pi\)-periodicity in cross-sectional variable~\(y\), the eigenfunctions are~\(v_k=e^{\pm iky}\) for index \(k=0,1,2,3,\ldots\).
The corresponding eigenvalues are \(\lambda_k=-(1-k^2)^2\) giving a discrete spectrum of~\(\{-1,0,-9,-64,\ldots\}\).
Thus there are two eigenvalues of zero corresponding to the basic spatial pattern~\(e^{\pm iy}\), and all other eigenvalues are\({}\leq-\beta=-1<0\).
That is, the \pde~\eqref{eq:shembed} satisfies Assumption~\ref{ass:spec}.

Consequently, Proposition~\ref{thm:svpde} asserts there are models of the \pde~\eqref{eq:shembed} in the form~\eqref{eq:dcdta} that emerge exponentially quickly and to a quantifiable error.
Interpreting these results for the field \(u(\ix,t)=\fu (\ix,\ix+\phi,t)\), for any phase~\(\phi\), leads to predictions about the pattern evolution of the linear marginal Swift--Hohenberg \pde~\eqref{eq:shpdem}.%
\footnote{A similar argument could be given for the modelling of wave modulation.
Such an approach would discretise the wave spectrum into distinct oscillating modes and one would choose one wavenumber on which to base a subcentre manifold (as defined by \cite{Sijbrand85}).
However, there would be no straightforward guarantee that the described wave modulation would emerge from general initial conditions.}

The model here is particularly straightforward.
Let's use the complex exponentials~\(e^{\pm iy}\) as the two basis functions in~\(V_0\) to span the basic slow subspace corresponding to the eigenvalues of \(A_0=0_2\).
We need an inner product, Definition~\ref{def:adj}, so introduce the mean \(\langle z,v\rangle=\frac1{2\pi}\int_0^{2\pi} zv\,dy\).
Then the adjoint eigenfunctions \(Z_0=V_0\).
Recursively solving equation~\eqref{eqs:toep} leads to all \(V_n=0\) for \(n\geq 1\) (that is, here \cV~is block-diagonal).
Further, the evolution on the slow subspace is determined by
\(A_2=\diag(4,4)\), \(A_3=\diag(-4i,4i)\), \(A_4=\diag(-1,-1)\) and all others zero.
Hence Proposition~\ref{thm:svpde} assures us that to an error quantified by some remainder terms~\eqref{eq:pderemain}, the slow dynamics 
\begin{equation}
\D t{c_\pm}\simeq 4\DD \fx{c_\pm} \mp 4i\Dn \fx3{c_\pm} -\Dn \fx4{c_\pm}
\label{eq:shdiff}
\end{equation}
will emerge exponentially quickly from general initial conditions.
These \pde{}s, of course, match the dispersion relation of the marginal Swift--Hohenberg \pde~\eqref{eq:shpdem} near the critical wavenumbers.

The leading order model is that the spatial pattern diffuses: \(\D t{c_\pm}\simeq 4\DD \fx{c_\pm}\)\,.  The corresponding, slow subspace, embedding field is \(\fu(\fx,y,t)\simeq c_+(\fx,t)e^{iy}+c_-(\fx,t)e^{-iy}\) which predicts an emergent physical field of \(u(\ix,t)=\fu(\ix,\ix+\phi,t)\simeq c_+(\ix,t)e^{i\ix+i\phi}+c_-(\ix,t)e^{-i\ix-i\phi}\) for any constant phase~\(\phi\) (the phase~\(\phi\) could be absorbed into~\(c_\pm\)).

Initial conditions for the embedding \pde~\eqref{eq:shembed} appear paradoxical.
On the one hand, the embedding \pde~\eqref{eq:shembed} describes dynamics along lines \(y=\fx+\phi\pmod{2\pi}\) which are completely decoupled for different phase~\(\phi\): consequently, one could have completely disparate solutions on neighbouring~\(\phi\).
On the other hand, the spectrum of the operator~\(\fL_0\) appears to guarantee a rapid relaxation to an equilibrium structure with basis~\(e^{\pm iy}\).
This apparent paradox is rationalised by the uncertain coupling between neighbouring stations~\(X\):
a rapid relaxation to a smooth slowly varying field is only guaranteed to occur for initial conditions where the uncertain coupling in the remainder terms are small enough;
that is, only for initial conditions which are sufficiently smooth.
It is only when the ensemble of solutions over all phases~\(\phi\) are smooth enough that the errors in the modelling~\eqref{eq:shdiff} will be acceptable.
Thus we can only have acceptable errors when the ensemble is chosen to be not disparate.
The remainder terms~\eqref{eq:pderemain} quantify this error for us.

\section{Nonlinear heat exchanger modelling}
\label{sec:nhem}

Reconsider the heat exchanger of Figure~\ref{fig:hescheme}. 
Now we include a nonlinear (quadratic) reaction in each pipe.
This section uses this example to introduce how to adapt the approach of previous sections to model nonlinear dynamics in cylindrical domains.
Section~\ref{sec:mndcd} develops these ideas to nonlinear theory for general systems.

In the nonlinear heat exchanger suppose the governing \pde{}s are
\begin{equation}
\D ta=+U\D xa+\frac R2(b-a)-\sigma a^2,\quad
\D tb=-U\D xb+\frac R2(a-b)+\sigma b^2,
\label{er:hedim}
\end{equation}
for flow to the left and right at equal and opposite velocities~$\pm U$, for some inter-pipe exchange at rate~$R$, and some quadratic reaction in one pipe and corresponding quenching in the other pipe, both of strength~\(\sigma\).
Non-dimensionalise space and time by choosing the reference time~$1/R$, the reference length~$U/R$, and reference field value~\(R/(2\sigma)\) to give the non-dimensional \pde{}s
\begin{equation}
\D ta=+\D xa+\rat12(b-a)-\rat12a^2,\quad
\D tb=-\D xb+\rat12(a-b)+\rat12b^2.
\label{er:hepde}
\end{equation}
These \pde{}s would be modelled with boundary conditions, such as $a={}$hot at $x=L$, and $b={}$cold at $x=0$.
However, we leave appropriate boundary conditions for further research \cite[e.g.]{Roberts92c}, and here focus on the evolution in the interior.
This section finds the model that in the interior the mean temperature, $c(x,t)=\rat12(a+b)$ satisfies a Burgers'-like \pde 
\begin{equation}
\D tc\approx-2c\D xc+\DD xc+\rat12c^3;
\label{eq:dcdtblpde}
\end{equation}
further, the aim is to certify this approximation with a novel error estimate and as the emergent dynamics.

To make the analysis more straightforward, let's transform the non-dimensional \pde{}s~\eqref{er:hepde} to mean and difference fields~\eqref{eq:meandiff};
that is, \(a=c+d\) and \(b=c-d\)\,.
Rearranging the mean and difference of the \pde{}s~\eqref{er:hepde} gives the equivalent \pde\ system
\begin{equation}
\D tc=\D xd-cd\,,\quad
\D td=-d+\D xc-\rat12(c^2+d^2).
\label{er:hecann}
\end{equation}
In this form we readily see that the difference field~$d$ tends to decay exponentially quickly, albeit with the quadratic reaction forcing some difference, but that interaction between gradients of the mean and difference fields generates other effects.

\subsection{In the interior}

Fix upon any station along the pipe, say at $x=X$\,, and consider the mean and difference fields in the vicinity of $x=X$.  
In the vicinity express the fields as
\begin{subequations}\label{ers:trt}%
\begin{eqnarray}
c(x,t)&=&c_0(X,t)+c_1(X,t)\pt1 +c_2(X,x,t)\pt2,
\label{er:ctrt}
\\
d(x,t)&=&d_0(X,t)+d_1(X,t)\pt1 +d_2(X,x,t)\pt2,
\label{er:dtrt}
\end{eqnarray}%
\end{subequations}
where by Taylor's Remainder Theorem $c_n(X,t)=\Dn xnc|_{x=X}$ except for the case $n=2$ where $c_2(X,x,t)=\Dn x2c|_{x=\xec}$ for some unknown~\(\xec\) satisfying \(X \lessgtr\xec(X,x,t) \lessgtr x\).
Similarly, $d_n(X,t)=\Dn xnd|_{x=X}$ except the case $d_2(X,x,t)=\Dn x2d|_{x=\xed}$ for some unknown~\(\xed\) satisfying \(X \lessgtr\xed(X,x,t) \lessgtr x\).
That is, \(c_2\)~and~\(d_2\) are second order derivatives but evaluated at some nearby but uncertain and typically moving locations (although soon we will evaluate them also at \(x=X\) and consequently thereafter \(c_2,d_2\) and their derivatives only depend upon \(X\)~and~\(t\)).

For definiteness and reasonable conciseness, in this section we truncate the Taylor series approximation to second order---the case \(N=2\).
Appendix~\ref{sec:camnhe} lists computer algebra code that not only generates the intermediate steps and  results here, but also does so for any truncation of the Taylor series---any~\(N\leq9\) was tested. 

\paragraph{Local ODEs}
As before, substitute the Taylor expansions~\eqref{ers:trt} into the governing \pde{}s~\eqref{er:hecann}.
The computed residuals of the \pde{}s are exact everywhere. 
But they are useful near the section \(x=X\)\,.
To find a set of linearly independent equations just differentiate the residuals and evaluate at $x=X$\,.
The first of the \pde{}s~\eqref{er:hecann} give three \ode{}s for the $c_n$~coefficients:
\begin{subequations}%
\begin{eqnarray}&&
\dot c_{0}=d_{1}-c_0d_0, \label{sq:c0}\\&&
\dot c_{1}=d_{2}-c_0d_1-c_1d_0, \label{sq:c1}\\&&
\dot c_{2}=3d_{2x}-c_0d_2-2c_1d_1-c_2d_0. \label{sq:c2}
\end{eqnarray}%
\end{subequations}
Analogously, the second of the \pde{}s~\eqref{er:hecann} give three  \ode{}s for the $d_n$~coefficients:
\begin{subequations}%
\begin{eqnarray}&&
\dot d_{0}=-d_0+c_{1}-\rat12(c_0^2+d_0^2), \label{sq:d0}\\&& 
\dot d_{1}=-d_1+c_{2}-c_0c_1-d_0d_1, \label{sq:d1}\\&&
\dot d_{2}=-d_2+3c_{2x}-c_1^2-c_0c_2-d_1^2-d_0d_2. \label{sq:d2}
\end{eqnarray}%
\end{subequations}
In this set of six coupled \ode{}s, and hereafter, the variables~\(c_2\) and~\(d_2\) are only a function of \(X\)~and~\(t\) as they have been evaluated at \(x=X\) (the uncertain locations~\xec\ and \xed\ have also been squeezed to \(\xec=\xed=X\) by this evaluation).  The uncertainty only appears via the occurrence of the coupling derivatives~\(c_{2x}\) and~\(d_{2x}\) at the station~\((X,t)\).

Define the state vector $\uv =(c_0,d_0,c_1,d_1,c_2,d_2)$ and group these six \ode{}s into the matrix-vector system, of the form $\de t{\uv }=\cL\uv +\vec f(\uv )+\vec r(t)$,
\begin{equation}
\de t{\uv }=\underbrace{\begin{bmatrix} 
0&0&0&1\\ 0&-1&1&0\\
&&0&0&0&1\\ &&0&-1&1&0\\
&&&&0&0\\ &&&&0&-1\\
\end{bmatrix}}_{\cL}\uv 
+\underbrace{\begin{bmatrix} -c_0d_0\\-\rat12(c_0^2+d_0^2)
\\-c_0d_1-c_1d_0\\-c_0c_1-d_0d_1
\\c_0d_2-2c_1d_1-c_2d_0 \\ -c_1^2-c_0c_2-d_1^2-d_0d_2 \end{bmatrix}}_{\vec f(\vec u)}
+\underbrace{\begin{bmatrix} 0\\0\\0\\0\\
3d_{2x}\\3c_{2x} \end{bmatrix}}_{\vec r}
\label{er:odes}
\end{equation}
where $d_{2x}$~and~$c_{2x}$ give some definite but uncertain inter-station coupling. 
Crucially this transformation pushes the coupling to as high order as required, is carried through the analysis, and then estimates an error.

\subsection{The slow manifold emerges}
\label{sec:sme}

The system~\eqref{er:odes} appears in the form of a `forced' nonlinear system.
So our first task is to understand the linear homogeneous system obtained by omitting the nonlinearity and the `forcing' (although here the the `forcing' is actually coupling with neighbouring dynamics).
Subsequently, we invoke centre manifold theorems to deduce existence and emergence of a slow manifold model for the `forced' nonlinear dynamics.

\paragraph{Slow subspace}
The linearised homogeneous system~\eqref{er:odes} is upper triangular (also block toeplitz), so its eigenvalues are the diagonal of~\(\cL\), namely $0$~and~$-1$ each with multiplicity three.
The eigenvalues~$-1$ indicate that after transients in time, \Ord{e^{-\gamma t}} for any \(\gamma\in(0,1)\), the evolution lies on the 3D slow subspace of the zero eigenvalue.
Two eigenvectors corresponding to the zero eigenvalue are straightforward to find, namely
\begin{equation*}
\vec v_0=(1,0,0,0,0,0),\quad \vec v_1=(0,1,1,0,0,0).
\end{equation*}
Another eigenvector is generalised and come from solving $\cL \vec v_2=\vec v_0$ (and more generalised eigenvectors in the cases of truncations \(N>2\)):
\begin{align*}&
\vec v_2=(0,0,0,1,1,0).
\end{align*}
Letting the matrix $\cV =\begin{bmatrix} \vec v_0&\vec v_1&\vec v_2 \end{bmatrix}$, the slow subspace is then $\uv =\cV \vec c$ where we use $\vec c=(c_0,c_1,c_2)$ to directly parametrise the slow subspace (empowered by the form chosen for the eigenvectors~$\vec v_k$); denote the slow subspace by~\(\EE^2_{c}(X)\).
On this slow subspace~\(\EE^2_{c}(X)\) the evolution is guided by a toeplitz matrix, namely
\begin{equation}
\de t{\vec c}=\cA\vec c=\begin{bmatrix} 0&0&1\\ 
0&0&0\\ 0&0&0 \end{bmatrix}\vec c\,.
\label{er:linhomo}
\end{equation}

On this slow subspace~\(\EE^2_{c}(X)\), from the eigenvectors via $\uv =\cV \vec c$\,, the difference components 
\begin{equation*}
\vec d=(d_0,d_1,d_2)=(c_1,c_2,0).
\end{equation*}

\paragraph{There exists an emergent, infinite dimensional, slow manifold}
%
Consider the system~\eqref{er:odes} over a set of stations~\XX: then system~\eqref{er:odes} over all stations \(X\in\XX\) is well-posed and autonomous, except for non-autonomous forcing across the boundary~\(\partial\XX\).
The system~\eqref{er:odes} has two closed \cL-invariant subspaces with a spectral gap: for example, \(\bigtimes_{X\in\XX}\EE^2_{c}(X)\)~is the slow subspace.
Thus the general Proposition~\ref{thm:gpdecm} of section~\ref{sec:mndcd} applies to ensure the existence of an emergent slow manifold \(\vec d=\vec h(\vec c;X)\), denoted~\(\cM_c^2\), representing the slow dynamics across the domain~\XX.


Proposition~\ref{thm:gpdecm} assures us the slow manifold~\(\cM_c^2\) exists and emerges provided the resultant model is restricted to domains~\XX\ where the coupling derivatives \(c_{2x}\) and~\(d_{2x}\) are small enough.
It is in only this statement that we need make the slowly varying assumption of multiscale modelling.
This slowly varying restriction need not be imposed on the construction of the slow manifold model (section~\ref{sec:ufnsm}); it only need be a restriction on the domain~\XX\ to which the model is applied.
Thus the slowly varying nature only need restrict the regime of use of the model, not its construction.


\subsection{Uncertainly coupled nonlinear slow manifold}
\label{sec:ufnsm}

We need to construct the emergent slow manifold of the nonlinear local system~\eqref{er:odes} when the system is `forced' by the uncertain coupling and `bent' by the nonlinearity.

The slow manifold is to be constructed to some order in the variables and the uncertain forcing.
In principle, we could construct the slow manifold of the system~\eqref{er:odes} to arbitrarily high order and to a huge variety of relative weights of variables \cite[e.g.]{Zhenquan00}.
In practice, we want to construct an approximation consistent with the Taylor series truncation~\eqref{ers:trt}, and consistent with the notion that the solution fields \(c(x,t)\) and~\(d(x,t)\) are slowly varying in space.
To correspond to slow space variations, define the state vector~\(\uv \) to have amplitude (not a norm\footnote{The amplitude~\(\nuv\) is not a norm as it fails the absolute homogeneity property.})
\begin{equation}
\nuv:=|c_0|+|d_0|+|c_1|^{1/2}+|d_1|^{1/2}+|c_2|^{1/3}+|d_2|^{1/3}.
\label{er:unorm}
\end{equation}
Some consequences of this definition are that  
\begin{equation}
c_0,d_0=\Ord{\nuv},\quad
c_1,d_1=\Ord{\nuv^2},\quad 
c_2,d_2=\Ord{\nuv^3},
\quad\text{as }\nuv\to 0\,.
\label{er:cdmag}
\end{equation}
Because \(c_n,d_n\) represent \(n\)th~space derivatives, this choice of amplitude corresponds to the traditional conventional assumption that each space derivative is roughly of the same order of magnitude as the amplitude of the field itself (although I do use the order symbol in its strict sense that the left-hand side could be also vanishing relative to the right-hand side).\footnote{
By defining different amplitudes for the state vector~\(\uv \) we could make quite different assumptions about the relative order  of spatial derivatives, and even different assumptions about the relative magnitude of the fields \(c\) and~\(d\).  
Different choices correspond to adopting different views of the dynamics in the state space~\(\uv \).
The choice of amplitude~\eqref{er:unorm} appears the simplest and with the strongest connection to other methodologies.}
But in our approach the interpretation is fundamentally different to the traditional: 
here we recognise that the dynamics of the system~\eqref{er:odes} is what it is; 
our choice of amplitude merely affects how we describe geometric objects in the state space;
the choice~\eqref{er:unorm} corresponds to us choosing to describe the dynamics to multinomial terms of high order in~\(c_0,d_0\), intermediate order in~\(c_1,d_1\), and low order in~\(c_2,d_2\).
That is all that is implied by the amplitude.

Lastly, to be consistent with the Taylor series truncation~\eqref{ers:trt}, here we construct the slow manifold to an \emph{absolute error}~\Ord{\nuv^5}:
the exponent \(5=N+2+1\) since  \(N\)~orders are due to the \(N=2\) space derivatives in the truncation~\eqref{ers:trt}, two orders due to the quadratic nonlinearity in this particular problem, and the last one order to move to the leading error rather than the least significant order. 

The details of the construction of the slow manifold approximation are left to the computer algebra of Appendix~\ref{sec:camnhe}.
The computer algebra iteratively refines the description of the time dependent, nonlinear, slow manifold until the governing equations~\eqref{er:odes} are satisfied to the specified order of error, here the residuals are~\Ord{\nuv^5}.
Then \cite{Potzsche2006} [Proposition~3.6] assure us that the slow manifold is approximated to the same order of error.

We choose to parametrise the slow manifold in terms of the mean field variables~\(c_n\) as then the relation to the physical mean field is most direct.
As in the linear dynamics, the description involves convolutions,~\(\ou\big(,tt,-\big)\), over the past history of the uncertain coupling where the convolution is defined by~\eqref{eq:con}.
Appendix~\ref{sec:camnhe} then finds the local slow manifold to be
\begin{subequations}
\begin{eqnarray}
d_0&=&-\rat12c_0^2+c_1
+\rat38c_0^4-3c_0^2c_1+\rat32c_1^2+3c_0c_2
\nonumber\\&&{}
-3\Z\Z c_{2x}
-9c_0\Z d_{2x}-9c_0\Z{\Z d_{2x}},
\label{eqq:smd00}
\\
d_1&=&-c_0c_1+c_2-3\Z d_{2x}
+6c_0\Z{\Z c_{2x}},
\\
d_2&=& 
-c_1^2-c_0c_2
+3\Z c_{2x} +3c_0\Z d_{2x}.
\end{eqnarray}\label{eqq:smd0}
\end{subequations}
On this slow manifold the evolution is
\begin{subequations}
\begin{eqnarray}
\dot c_0&=& c_2-2c_0c_1+\rat12c_0^3-3\Z d_{2x}
+9c_0\Z{\Z c_{2x}},
\label{eqq:smdcdt0}
\\
\dot c_1&=&  
-2c_0c_2-2c_1^2+\rat32c_0^2c_1 
+3\Z c_{2x} +6c_0\Z d_{2x},
\\
\dot c_2&=& 3d_{2x}-3c_0\Z c_{2x}.
\end{eqnarray}\label{eqq:smdcdt}%
\end{subequations}

\subsection{The slow manifold represents a slowly varying model}

As established by section~\ref{sec:sme}, the slow manifold emerges exponentially quickly from all nearby initial conditions.  
To find the evolution on the slow manifold, recall the exact Taylor polynomial~\eqref{er:ctrt} for the mean field:
\(c(x,t)=c_0(X,t)+c_1(X,t)(x-X) +c_2(X,x,t)\rat12(x-X)^2\).
To obtain a \pde\ of the slow variations in the mean field~\(c\), first take the time derivative of~\eqref{er:ctrt} (keeping constant \(x\) and~\(X\)) and evaluate at \(x=X\), and second take the space derivatives and evaluate at \(x=X\):\footnote{Remember that the definition of~\(c_N(X,x,t)\) accounts for the uncertain variation of~\xec\ in time~\(t\).}
then
\begin{align}
&\left.\D tc\right|_{x=X}=\D t{c_0}\,,
&&\left.c\right|_{x=X}={c_0}\,,
&&\left.\D xc\right|_{x=X}={c_1}\,,
&&\left.\DD xc\right|_{x=X}={c_2}\,.
\label{er:cdiffid}
\end{align}
Substitute into the slow manifold evolution~\eqref{eqq:smdcdt0} for~\(c_0\) and obtain
\begin{align*}
\left.\D tc\right|_{x=X}
={}&\left.\left(\DD xc-2c\D xc+\rat12c^3
-\sigma\ou\big(w_{d},tt,-\big)
+9 c\ou\big(\ou\big(w_{c},tt,-\big),tt,-\big)
\right)\right|_{x=X}
\\&{}
+\Ord{\nuv^5,e^{-\gamma t}}.
\end{align*}
Recall that \(x=X\) is a generic station in the interior of the domain, thus the above evolution holds everywhere in the interior giving the model for the mean field to be the reaction-advection-diffusion \pde
\begin{equation}
\D tc
=\DD xc-2c\D xc+\rat12c^3
-3\ou\big(d_{2x},tt,-\big)
+9 c\,\ou\big(\ou\big(c_{2x},tt,-\big),tt,-\big)
+\Ord{\nuv^5,e^{-\gamma t}}.
\label{er:mpdemf}
\end{equation}
The rigorous slowly varying model is then the \pde~\eqref{er:mpdemf} with \(\Ord{e^{-\gamma t}}\) neglected as a quickly decaying transient, with \Ord{\nuv^5}~neglected as a nonlinear error, and  the unknown $-3\ou\big(d_{2x},tt,-\big)+9c\,\ou\big(\ou\big(c_{2x},tt,-\big),tt,-\big)$ neglected as the leading coupling error.

The reaction modified Burgers' \pde~\eqref{er:mpdemf} is the \pde\ one would obtain via a variety of systematic methods.  
What is new is the rigorous emergence at every interior locale (away from boundary layers and shocks) from a finite domain of initial conditions, and the novel leading order estimate of the spatial coupling error.

To find the slow manifold itself, recall the exact Taylor polynomial~\eqref{er:dtrt} for the difference field:
\(d(x,t)=d_0(X,t)+d_1(X,t)(x-X) +d_2(X,x,t)\rat12(x-X)^2\) so that
\(\left.d\right|_{x=X}={d_0}\)\,.
Substitute this and the expressions~\eqref{er:cdiffid} for~\(c_n\)  into the slow manifold expression~\eqref{eqq:smd00} for~\(d_0\) and obtain
\begin{align*}
\left.d\right|_{x=X}
={}&\left.\left[\D xc-\rat12c^2
+3c\DD xc+\rat32\left(\D xc\right)^2-3c^2\D xc+\rat38c^4
\right.\right.\\&\left.\left.\phantom{\D xc}
-3\ou\big(\ou\big(w_{c},tt,-\big),tt,-\big)
-9 c\ou\big(w_{d},tt,-\big)
-9 c\ou\big(\ou\big(w_{d},tt,-\big),tt,-\big)
\right]\right|_{x=X}
\\&{}
+\Ord{\nuv^5,e^{-\gamma t}}.
\end{align*}
Recall that \(x=X\) is a generic station in the interior of the domain, thus the above equation holds everywhere in the interior giving the difference field 
\begin{align}
d(x,t)&=
\D xc-\rat12c^2
+3c\DD xc+\rat32\left(\D xc\right)^2-3c^2\D xc+\rat38c^4
\nonumber\\&\quad{}
-3\ou\big(\ou\big(c_{2x},tt,-\big),tt,-\big)
-9 c\,\ou\big({},tt,-\big)(1+\ou\big({},tt,-\big))d_{2x}
+\Ord{\nuv^5,e^{-\gamma t}}.
\label{er:msmmf}
\end{align}
The rigorous slow manifold is then~\eqref{er:msmmf} with \(\Ord{e^{-\gamma t}}\) neglected as a quickly decaying transient, with \Ord{\nuv^5} and the unknown coupling via~$d_{2x}$ and~$c_{2x}$ neglected as errors.

Importantly, in any particular situation we are now empowered to estimate the local errors by constructing to higher orders in~\(\nuv\), and we can bound the spatial coupling errors in terms of \(c_{Nx}\) and~\(d_{Nx}\).

\subsection{The generating function simplifies}
\label{sec:gfs}

To empower dealing with the hierarchy of \ode{}s~\eqref{er:odes} in a compact form, and making a direct connection with the method of multiple scales, let's introduce two generating functions (polynomials) that encapsulate the three local derivatives within the data structure of a quadratic polynomial:
\begin{subequations}\label{er:genfun}%
\begin{eqnarray}&&
\tc(\xi ,X,t):=c_0(X,t)\xit 0 +c_1(X,t)\xit 1 +c_2(X,t)\xit 2\,,\quad
\\&&
\td(\xi ,X,t):=d_0(X,t)\xit 0 +d_1(X,t)\xit 1 +d_2(X,t)\xit 2 
\end{eqnarray}
\end{subequations}
(recalling \(c_2(X,t):=c_2(X,X,t)\) and \(d_2(X,t):=d_2(X,X,t)\)).
Then by the sums \(\eqref{sq:c0}+\xi\eqref{sq:c1}+\rat12\xi^2\eqref{sq:c2}\) and \(\eqref{sq:d0}+\xi\eqref{sq:d1}+\rat12\xi^2\eqref{sq:d2}\), the system of six \ode{}s~\eqref{er:odes} are precisely the pair of coupled \ode{}s
\begin{eqnarray*}&&
\D t{\tc}=+\D \xi {\td}-\tc\td+\xit23d_{2x}
+\rat12\xi ^3(c_1d_2+c_2d_1)+\rat14\xi ^4c_2d_2 \,,
\\&&
\D t\td=-\td+\D \xi \tc-\rat12(\tc^2+\td^2)+\xit23c_{2x}
+\rat12\xi ^3(c_1c_2+d_1d_2)+\rat18\xi ^4(c_2^2+d_2^2).
\end{eqnarray*}
The explicit cubic and quartic terms in~\(\xi\) exactly cancel with the cubic and quartic terms in~\(\xi \) that are implicit in the nonlinear terms \(\tc\td\), \(\tc^2\) and~\(\td^2\).
We write an equivalent version of the above form by noting that \(c_1=\tc_\xi-\xi\tc_{\xi\xi}\), \(c_2=\tc_{\xi\xi}\) and similarly for~\(d_n\), then closed exact statements of the coupled \ode{}s are
\begin{eqnarray*}&&
\D t{\tc}=+\D \xi {\td}-\tc\td+\xit23d_{2x}
\\&&\qquad{}
+\rat12\xi ^3\left[(\tc_\xi-\xi\tc_{\xi\xi})\td_{\xi\xi}
+(\td_\xi-\xi\td_{\xi\xi})\tc_{\xi\xi}\right]+\rat14\xi ^4\tc_{\xi\xi}\td_{\xi\xi} \,,
\\&&
\D t\td=-\td+\D \xi \tc-\rat12(\tc^2+\td^2)+\xit23c_{2x}
\\&&\qquad{}
+\rat12\xi ^3\left[(\tc_\xi-\xi\tc_{\xi\xi})\tc_{\xi\xi}
+(\td_\xi-\xi\td_{\xi\xi})\td_{\xi\xi}\right]
+\rat18\xi ^4(\tc_{\xi\xi}^2+\td_{\xi\xi}^2).
\end{eqnarray*}
The generating polynomial transform~\eqref{er:genfun} maps from the vector~\(\uv \) of variables in the state space into (quadratic) polynomials in~\(\xi \).
Differentiation~\(\Dn \xi n{}\) and evaluation at \(\xi =0\) transforms back from the generating polynomials to the state space vector of variables. 
For example, the cubic and quartic terms disappear when differentiating up to twice and evaluating at \(\xi=0\).
This back transform is impervious to any terms of higher order than quadratic in~\(\xi\) as we only address dynamics up to \(c_2\) and~\(d_2\),
thus let's lump the explicit higher order terms into one qualitative order term:
\begin{subequations}\label{ers:tt}%
\begin{eqnarray}&&
\D t{\tc}=+\D \xi {\td}-\tc\td+\xit23d_{2x}
+\Ord{\xi ^3} \,,
\label{er:tct}
\\&&
\D t\td=-\td+\D \xi \tc-\rat12(\tc^2+\td^2)+\xit23c_{2x}
+\Ord{\xi ^3}.
\label{er:tdt}
\end{eqnarray}%
\end{subequations}
It may be useful to remember that these order terms are not errors: instead within the nonlinearities there are implicit cubic and quartic terms in~\(\xi \) that these order terms cancel.

Amazingly, this generating polynomial form~\eqref{ers:tt} is nearly identical to the original non-dimensional physical \pde{}s~\eqref{er:hecann}.
The differences are:
\begin{itemize}
\item symbolically \(\tc,\td\) replace \(c,d\), and `artificial' \(\D \xi {}\) replaces spatial \(\D x{}\);
\item \eqref{ers:tt} identifies the exact remainder terms, from Taylor's Remainder Theorem, through the terms~\(\xit23c_{2x}\) and~\(\xit23d_{2x}\).

\end{itemize}

The nonlinear analysis needs to be careful with the magnitude of variables and effects.
Via the definition of the amplitude~\eqref{er:unorm}, recognising the order of magnitudes~\eqref{er:cdmag}, given that the artificial~\(\xi\) is finite, and all as \(\nuv\to0\)\,,
\begin{subequations}%
\begin{eqnarray}
|\tc|&\leq& |c_0|+|\xi||c_1|+\rat12|\xi|^2|c_2|
\nonumber\\&&{}
=\Ord{\nuv+|\xi|\nuv^2+|\xi|^2\nuv^3}=\Ord{\nuv},
\label{er:qnormt}
\\
\left|\D\xi\tc\right|
&=&|c_1+\xi c_2|
\leq |c_1|+|\xi||c_2|
=\Ord{\nuv^2+|\xi|\nuv^3}
=\Ord{\nuv^2},
\qquad\\
\left|\DD\xi\tc\right|
&=&|c_2|
=\Ord{\nuv^3},
\end{eqnarray}
\end{subequations}
and similarly for the \(d_n\)~variables.
Let's exploit these orders of magnitude in the \pde{}s~\eqref{ers:tt} by labelling each term in the \pde{}s with its \emph{relative} order in~\(\nuv\).
Invoke this labelling explicitly in the \pde{}s by introducing an artificial parameter~\(\epsilon\) that counts the relative order of each term: for example, \(\D\xi\tc=\Ord{\nuv^2}\) and so is labelled with a multiplication by~\(\epsilon^2\) as it is second order, but then the whole \pde\ is divided by~\(\epsilon\) so that the term appears as \(\epsilon\D\xi\tc\) in the \pde{}s.
The \pde{}s~\eqref{ers:tt} then appear as%
\footnote{The uncertain coupling terms are unlabelled in~\eqref{ers:tte} as the coupling needs to match the implicit unlabelled components \(\xit2\D t{c_2}\) and \(\xit2\D t{d_2}\) on the left-hand sides.}
\begin{subequations}\label{ers:tte}%
\begin{eqnarray}&&
\D t{\tc}=+\epsilon\D \xi {\td}-\epsilon\tc\td+\xit23d_{2x}
+\Ord{\epsilon\xi ^3},
\label{er:tcte}
\\&&
\D t\td=-\td+\epsilon\D \xi \tc-\epsilon\rat12(\tc^2+\td^2)+\xit23c_{2x}
+\Ord{\epsilon\xi ^3}.
\label{er:tdte}
\end{eqnarray}%
\end{subequations}
This form corresponds closely to the classic scaled equations used in multiple scale modelling \cite[e.g.]{Roberts88a, Vandyke87, Nayfeh71b, Nayfeh2005}: in applying the method of multiple scales to the \pde~\eqref{er:hecann} one would 
\begin{itemize}
\item introduce a `slow space scale' \(\xi=\epsilon x\),
\item focus on small amplitude solutions by scaling fields \(c=\epsilon\tc(\xi,t)\) and \(d=\epsilon\td(\xi,t)\),
\end{itemize}
and then straightforward change of variables derives the `local' parts of the system~\eqref{ers:tte}, symbolically identically.
One difference is that we now include the uncertain coupling terms that form the leading error in a multiple scale analysis.  
Further, the interpretation is quite different to classic multiple scale modelling: 
here this form arises as a consequence of the convenient data structure of the generating polynomial, with the \(\epsilon\)~factors just doing some bookkeeping for us.
The data structure is convenient because it greatly simplifies, compared to sections~\ref{sec:sme}--\ref{sec:ufnsm}, the details of slow manifold modelling.

Note that in constructing the slow manifold, derivatives~\(\D\xi{}\) are always multiplied by~\(\epsilon\) so although lower powers of~\(\xi\) are generated by the unwanted~\Ord{\xi^3} terms, albeit implicit in~\eqref{ers:tte}, such lower powers come with higher powers of~\(\epsilon\).  
Since variable~\(\epsilon\) just counts order, such lower powers of~\(\xi\) remain of higher order in the construction.

\subsubsection{Establish the slow manifold model}

The practical procedure to construct a slow manifold model of the system~\eqref{ers:tte} follows a straightforward formal procedure first detailed decades ago \cite[]{Roberts88a}, but modified to now include the novel explicit uncertain coupling using techniques developed for non-autonomous deterministic  \cite[e.g.]{Potzsche2006} or stochastic systems  \cite[e.g.]{Arnold93, Roberts06k}.
However, in this application of the approach, the interpretation and justification of the formal procedure is different.

Firstly, the system~\eqref{ers:tte} looks like \pde{}s because of the derivatives~\(\D\xi{}\). 
But the system is not a \pde\ because these derivatives just access different components in the generating polynomials: the system is a set of \ode{}s.
The system is a set of \ode{}s at each station~\(X\), uncertainly coupled by \(c_{2x}\) and~\(d_{2x}\) to \ode{}s at all other stations.
The `infinite dimensionality' of the original physical \pde{}s~\eqref{er:hecann} arises via the uncertain coupling between locales in the system~\eqref{ers:tte}.

\paragraph{Equilibrium}
The slow manifold is based upon the equilibrium at the origin \(\tc=\td=0\) for the system~\eqref{ers:tte}.
When zero throughout the domain~\XX, then the uncertain coupling is also zero giving an equilibrium over the whole interior.

\paragraph{Linearisation}
In the system~\eqref{ers:tte}, the terms~\(\D\xi{}\) just represent the off-diagonal blocks in the block upper triangular matrices~\cL\ of~\eqref{er:odes}.
The terms~\(\D\xi{}\) are negligible in the sense that they do not affect the eigenvalues: not that they are multiplied by~\(\epsilon\) (which is only a convenient counter); nor that they are of `higher order' (as in multiple scales).
That is, the spectrum of the linearisation about the origin is the same as that for 
\begin{equation*}
\D t\tc=0\,,\quad\D t\td=-\td\,,
\end{equation*}
namely, eigenvalues \(\lambda\in\{0,-1\}\), each of multiplicity three (once for each component in~\(\xi^n\)).

But this spectrum only accounts for the local dynamics at a station.  
It is only in accounting for the uncertain coupling between neighbouring stations that we make the slowly varying assumption (as section~\ref{sec:sme} discusses): we are only interested in regimes where the uncertain coupling is a negligible influence.
That is, we assume that solutions vary smooth enough in the domain that the terms of~\eqref{ers:tte} in~\(c_{2x}\) and~\(d_{2x}\) are a negligible perturbing influence---quantified by tracking their perturbative effects.
In this approach, the `slowly varying' assumption only directly involves these gradients of the highest resolved derivatives (section~\ref{sec:sme}); we do not need to make restrictive assumptions about the magnitude of the other derivatives.
Returning to the spectrum, and upon recognising the coupling, the eigenvalues \(\lambda\in\{0,-1\}\) are repeated an `infinite number' of times for all the stations in the domain~\XX. 
As established by Proposition~\ref{thm:gpdecm} in section~\ref{sec:cmtsm}, theory by \cite{Aulbach96, Aulbach2000} assures us that an emergent slow manifold then exists for the generating polynomial system~\eqref{ers:tte}.

\paragraph{Construct the slow manifold model}
Appendix~\ref{sec:smvgf} lists and describes computer algebra code that, in essence, implements the earlier formal procedure \cite[]{Roberts88a, Roberts96a}. 
But to cater for the uncertain coupling in~\eqref{ers:tte}, the procedure is extended using techniques developed for non-autonomous and stochastic systems  \cite[e.g.]{Chao95} and validated by Proposition~3.6 of \cite{Potzsche2006}.

Appendix~\ref{sec:smvgf} iteratively improves a description of the slow manifold and evolution thereon until the residuals of the system~\eqref{ers:tte} are~\Ord{\nuv^{4}+\xi^{4}}, relative to~\(\nuv\).
That is, the residuals are~\Ord{\nuv^{5}+\xi^{5}}, in absolute terms, to correspond to the order of error adopted by section~\ref{sec:ufnsm}.
Appendix~\ref{sec:ctsmv} then lists code that unpacks from this new description the Taylor series description and confirms that it is identical to the previously derived slow manifold~\eqref{eqq:smd0} and evolution~\eqref{eqq:smdcdt}.
This agreement holds for all tested truncations, namely \(N\leq9\).

However, the derivation here is more compact (as well as directly connecting to and extending previous methodologies).
Here, Appendix~\ref{sec:smvgf} a slow manifold in the form
\begin{align}
\td={}&\left[-\rat12\tc^2+\D\xi\tc\right]
+\left[\rat38\tc^4-3\tc^2\D\xi\tc+\rat32\left(\D\xi\tc\right)^2
+3\tc\DD\xi\tc-\Dn\xi3\tc\right]
\nonumber\\&{}
+\left[\xit23\Z c_{2x}
-\xit13\Z d_{2x}+\xit23\Z d_{2x}
-3\Z{\Z c_{2x}}
\right.\nonumber\\&\left.\vphantom{\xit2}{}
+\xit1\tc\Z{\Z c_{2x}}
-9\tc\Z d_{2x}-9\tc\Z{\Z d_{2x}}
\right]
+\Ord{\nuv^{5}+\xi^{5}}.
\label{eq:hesmgf}
\end{align}
The first bracketed terms form the leading, second order, quasi-equilibria, estimate of the difference field, where \(\xi\)~derivatives correspond to spatial derivatives.
The second bracketed terms give fourth order corrections in this mixed order description.
The third bracketed terms form an estimate of the error induced by coupling with neighbouring stations: the different powers in~\(\xi\) label the different errors for the various spatial derivatives of the field~\(\td\).
The computer algebra simultaneously finds that on the slow manifold~\eqref{eq:hesmgf} the evolution is
\begin{eqnarray}
\D t\tc&=&
\left[\rat12\tc^3-2\tc\D\xi\tc+\DD\xi\tc\right]
+\left[\xit23d_{2x}
+\xit13\Z c_{2x}-\xit23\Z c_{2x}
\right.\nonumber\\&&\left.\vphantom{\xit2}{}
-3\Z d_{2x}+\xit16\tc\Z d_{2x} 
+9\tc\Z{\Z c_{2x}}
\right]
+\Ord{\nuv^{5}+\xi^{5}}.
\quad\label{eq:hesmegf}
\end{eqnarray}
The first bracketed terms gives the leading, third order, model~\eqref{eq:dcdtblpde} of Burgers'-like advection-diffusion with a cubic reaction.
The second bracketed terms additionally estimate the error induced by coupling with neighbouring stations: again, the different powers in~\(\xi\) label the different errors for the various spatial derivatives of the time derivative~\(\D t\tc\).

In short, and because of the symbolic identity between \(\xi\) and~\(x\) derivatives, and because of the general emergence of slow manifolds in some domain, the generating polynomial approach directly, compactly and efficiently derives the slow manifold model~\eqref{er:mpdemf}--\eqref{er:msmmf}.

\section{Model nonlinear dynamics in cylindrical domains}
\label{sec:mndcd}

Inspired by the modelling of the nonlinear heat exchanger (section~\ref{sec:nhem}), this section extends the general linear analysis of section~\ref{sec:pmid} to general nonlinear dynamics in cylindrical domains.

Adding nonlinearity to the class of \pde{}s~\eqref{eq:upde}, this section develops a rigorous approach, Proposition~\ref{thm:gpdecm}, to modelling the dynamics of \pde{}s in the class
\begin{eqnarray}
\D tu&=&\fL[u]+f[u]
\nonumber\\&=&
\fL_0u+\fL_1\D xu+\fL_2\DD xu+\cdots
+f\left(u,\D xu,\DD xu,\ldots\right),
\label{eq:upden}
\end{eqnarray}
where, as in section~\ref{sec:pmid}, the \pde{} holds on a cylindrical domain~\(\XX\times\YY\) for some field~\(u(x,y,t)\) in a given Banach space~\UU, where \(u\)~is a function of 1D longitudinal position \(x\in\XX\subset \RR\), cross-sectional position \(y\in\YY\subset\RR^\ydim\), and time \(t\in\RR\).
The square brackets notation on functions such as~\(f[u]\) denotes a dependence upon values of the field~\(u\) locally in~\(x\), namely upon~\(u\) and its derivatives (although it may be nonlocal in~\(y\)), as alternatively explicitly expressed in the parentheses of~\(f(u,\D xu,\DD xu,\ldots)\).%
\footnote{Further research aims to generalise this scenario to nonlocal operators~\(\fL_n\), nonlocal nonlinearity~\(f\) and nonautonomous systems.}
The nonlinearity function \(f[]:\UU\to\UU\) has no linear terms, formally \(f[u]=\Ord{u^2}\) as \(u\to0\)\,.

\begin{assumption}\label{ass:nonlin}
The operators~\(\fL_\ell\) continue to satisfy Assumption~\ref{ass:spec}. 
The nonlinearity~\(f()\) is autonomous and independent of longitudinal position~\(x\).
Extending section~\ref{sec:pmid}, the nonlinear function~\(f\) must
be~\Ord{|u|^p} as \(u\to0\)\,, \(p\geq2\)\,, and sufficiently smooth to have at least \(N+1+p\)~derivatives in a suitable domain about \(u=0\).
\end{assumption}

So far we have used \XX~to denote the open set of the physical domain (primarily because in linear dynamics we could address the dynamics at any station~\(X\) `independent' of other locales).
In nonlinear systems we address the dynamics as a collective whole across all relevant space---all relevant stations.
Thus we slightly change~\XX\ to now denote an open subset of the physical domain where boundary layers and internal shocks are excised from~\XX\ (if the domain is \(L\)-periodic, then \XX~could still be the entire spatial domain).
This qualitative redefinition is to ensure that the uncertain coupling is `small' for all stations \(X\in\XX\)\,:
the smaller a desired error in the modelling, the more restrictive the physical domain~\XX\ over which the modelling is valid.

\subsection{The generating function has equivalent dynamics}
\label{sec:gfhed}

This section establishes the following proposition.
The next section~\ref{sec:cucncm} then uses this form to establish a practical approach to constructing models of slow space-time evolution.

\begin{proposition}[nonlinear equivalence] \label{thm:nsv}
Let \(u(x,y,t)\) be governed by a \pde\ of the form~\eqref{eq:upden}.  
Then the dynamics at all locales \(X\in\XX\) are equivalently governed by the equation
\begin{equation}
\D t\tu=\sum_{\ell}\fL_\ell\Dn\xi\ell\tu
+f\left(\tu,\D\xi\tu,\DD\xi\tu,\ldots\right)
+r[u],
\label{eq:updeng}
\end{equation}
for the generating function polynomial~\(\tu(X,\xi,y,t)\) defined in~\eqref{eq:ugen}, and
for the `uncertain' coupling term~\(r[u]:\UU\to\UU\) given by~\eqref{eq:grun}.
\end{proposition}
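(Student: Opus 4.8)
The plan is to reproduce, at the level of general jets, the concrete collapse observed for the heat exchanger in Section~\ref{sec:gfs}. First I would fix a station $X\in\XX$ and invoke Taylor's Remainder Theorem exactly as in~\eqref{eq:utrtn}, writing $u(x,y,t)=\sum_{n=0}^{N-1}u_n(X,y,t)\pt n+u_N(X,x,y,t)\pt N$, and then define the generating polynomial $\tu(X,\xi,y,t):=\sum_{n=0}^N u_n(X,y,t)\xit n$ as in~\eqref{eq:ugen}. Substituting this Taylor polynomial into the \pde~\eqref{eq:upden}, differentiating up to $N$~times in~$x$, and evaluating at $x=X$---the very step that produced the linear \ode{}s~\eqref{eq:uodes}---now yields an augmented local system $\D t{u_n}=\sum_{\ell=0}^{N-n}\fL_\ell u_{n+\ell}+f_n+r_n$, where $f_n$ collects the coefficients contributed by the nonlinearity and $r_n$ is the remainder~\eqref{eq:remain} that couples to neighbouring stations.

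The heart of the matter is that the exponential generating function is multiplicative. If two fields have $x$-jets $\{p_n\}$ and $\{q_n\}$ at~$X$, then the Leibniz rule $(pq)_n=\sum_k\binom nk p_k q_{n-k}$ is precisely the assertion that $\tilde p\,\tilde q$ and the generating polynomial of the product~$pq$ agree up to terms of $\xi$-degree exceeding~$N$. Hence the jet-to-polynomial map $\{u_n\}\mapsto\tu$ is a truncated algebra homomorphism; and since the jet of $\D xu$ is the shifted jet $\{u_{n+1}\}$ whose generating polynomial is exactly $\D\xi\tu$, it follows that any monomial nonlinearity obeys $f(u,\D xu,\ldots)\mapsto f(\tu,\D\xi\tu,\DD\xi\tu,\ldots)$ modulo $\xi$-degree $>N$. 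I would therefore weight the $n$th \ode\ by~$\xit n$ and sum over $n=0,\ldots,N$: the left-hand side collapses to $\D t\tu$ by definition of~$\tu$; the linear terms reassemble as $\sum_\ell\fL_\ell\Dn\xi\ell\tu$ through the reindexing $\sum_n\xit n\,\fL_\ell u_{n+\ell}=\fL_\ell\Dn\xi\ell\tu$; and the nonlinear terms reassemble as $f(\tu,\D\xi\tu,\ldots)$ by the homomorphism property. Finally, the weighted remainders $\sum_n\xit n r_n$, together with the leftover high-degree pieces, are exactly what define the single coupling term $r[u]$ of~\eqref{eq:grun}, establishing~\eqref{eq:updeng}.

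Two points need care, and the second is the genuine obstacle. First, $\tu$ is a polynomial of degree~$N$, so~\eqref{eq:updeng} must be read coefficient-by-coefficient up to order~$\xi^N$, that is, recovered by the back-transform $\tu\mapsto\Dn\xi n\tu|_{\xi=0}=u_n$; the spurious terms of degree $>N$ generated by the truncated homomorphism are annihilated by this back-transform and so leave the original local \ode{}s untouched, which is the cancellation already witnessed after~\eqref{ers:tt}. Second, and hardest, is handling a \emph{general} smooth nonlinearity rather than a single monomial. For this I would expand~$f$ in its Taylor series about $u=0$ and apply multiplicativity to each monomial separately, using a Faa di Bruno / multinomial bookkeeping to follow how the jets of each $\Dn xj{u}$ propagate; Assumption~\ref{ass:nonlin}, which supplies at least $N+1+p$ derivatives of an \Ord{|u|^p} nonlinearity, is exactly what guarantees this expansion is valid to the retained order and that every discarded high-$\xi$ contribution is legitimately absorbed into~$r[u]$ without perturbing the coefficients up to~$\xi^N$. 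Checking that these two accountings remain mutually consistent---that each surplus term either dies under the back-transform or lands cleanly in~$r[u]$---is the step I expect to demand the most care.
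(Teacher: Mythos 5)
Your proposal is correct and follows essentially the same route as the paper: your ``weight the $n$th local \textsc{ode} by $\xit n$ and sum'' operation is precisely the paper's operator $\cG$ of~\eqref{eq:Gn}, your truncated-algebra-homomorphism observation via the Leibniz rule is the content of the paper's product lemma (proved there by induction on multinomial order, then extended to general smooth $f$ through its multivariate Taylor series under Assumption~\ref{ass:nonlin}), and your funnelling of the top-degree derivative discrepancies plus the high-order nonlinear leftovers into $r[u]$ matches the paper's Lemma~\ref{lem:dudxlg} and the error terms of~\eqref{eq:grun}. The only caveat is presentational: your statement that the jet of $\D xu$ has generating polynomial ``exactly'' $\D\xi\tu$ holds only modulo the top-degree term in $\D x{u_N}$, but since you already route those discrepancies into the $r_n$ at the component-\textsc{ode} level, your bookkeeping is consistent with the paper's.
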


As in section~\ref{sec:pmid} for linear \pde{}s, for nonlinear \pde{}s in the general form~\eqref{eq:upden}, assume the field~\(u\) is smooth enough to have continuous \(2N\)~derivatives in~\(x\) for some pre-specified Taylor series truncation~\(N\).  
Choose an arbitrary cross-station \(X\in\XX\).
Then write the field~\(u\) in terms of a local polynomial~\eqref{eq:utrtn} about the cross-section \(x=X\)\,.
As in section~\ref{sec:pmid}, \(u_N(X,x,y,t)\) is the \(N\)th~derivative at some implicit uncertain location~\(\xey\).
Define the generating polynomial
\begin{equation}
\tu(X,\xi,y,t):=\sum_{n=0}^{N-1}\xit n u_n(X,y,t)+\xit Nu_N(X,X,y,t),
\label{eq:ugen}
\end{equation}
\(\tu:\XX\times\Xi\times\YY\times\RR\to\UU\) for an arbitrary open interval~\(\Xi\subset\RR\) containing zero.
The first aim of this section is to prove that systematic modelling of the \pde~\eqref{eq:upden} is equivalent to well-known heuristic procedures expressed in terms of this generating polynomial.

A key task is to relate fields in physical space with their corresponding field in the `generating polynomial space'.
Define the operator
\begin{equation}
\cG:=\left[\sum_{n=0}^N\xit n\Dn xn{}\right]_{x=X}
=\left[1+\xit1\D x{}+\cdots+ \xit N\Dn xN{}\right]_{x=X},
\label{eq:Gn}
\end{equation}
where these brackets denote evaluation.
This operator is denoted by~\cG\ to signify it determines the generating polynomial corresponding to a given field: for example, it is straightforward to deduce from the Taylor polynomial~\eqref{eq:utrtn} and the Definition~\eqref{eq:ugen} that
\begin{equation}
\cG u(x,y,t)=\tu(X,\xi,y,t).
\label{eq:}
\end{equation}
But to use operator~\(\cG\) observe from~\eqref{eq:Gn} that 
\begin{equation}
\cG=\left[e^{\xi \partial_x}+\Ord{\xi^{N+1}}\right]_{x=X}
=\left[\cdot\vphantom{e^\xi}\right]_{x=X+\xi}+\Ord{\xi^{N+1}}
\label{eq:Gnshft}
\end{equation}
\cite[p.65, e.g.]{npl61}; that is, the generating polynomial is equivalent, to errors~\Ord{\xi^{N+1}}, to evaluation a distance~\(\xi\) from the chosen cross-section~\(X\in\XX\).
This equivalence of~\(\xi\) and space~\(x\) is the key to the equivalence between our rigorous approach to modelling and the well established heuristic of slow scaling of the space variables.

Crucially, differences arise between the equivalence, and these differences lead to our derivation of remainder terms that combine to form a systematic description of the modelling error.
The differences arise in spatial gradients.
\begin{lemma} \label{lem:dudxlg}
Use \(u_N^{(p)}\) to denote the \(p\)th derivative~\(\Dn xp{u_N}\). Then for \(\ell=0,\ldots,N\),
\begin{equation}
\cG\Dn x\ell u
=\Dn\xi\ell\tu 
+\sum_{n=N-\ell+1}^N\binom{n+\ell}{N}u_N^{(n+\ell-N)}(X,X,y,t)\xit n
+\Ord{u_N\xi^{N+1}}.
\label{eq:dudxlg}
\end{equation}
\end{lemma}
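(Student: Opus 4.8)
The plan is to compute $\cG\Dn x\ell u$ directly from the exact local expansion and compare it term by term with $\Dn\xi\ell\tu$. First I would record that, by Definition~\eqref{eq:ugen} together with~\eqref{eq:Gn}, the generating polynomial $\tu=\cG u$ is precisely the degree-$N$ Taylor polynomial of $u$ about $x=X$ (with $(x-X)$ replaced by~$\xi$); hence differentiating in~$\xi$ gives the clean truncated expression $\Dn\xi\ell\tu=\sum_{n=0}^{N-\ell}u_{n+\ell}\xit n$, where each $u_{n+\ell}$ is the honest derivative $\Dn x{n+\ell}u|_{x=X}$ and the top coefficient (at $n=N-\ell$) is $u_N(X,X,y,t)$. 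The whole content of the lemma is therefore to pin down the difference $\cG\Dn x\ell u-\Dn\xi\ell\tu$.

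For the main computation I would substitute the exact expansion~\eqref{eq:dudxl} of $\Dn x\ell u$ and apply $\cG$ to each summand. The summands carrying constant-in-$x$ coefficients $u_{n+\ell}$ (those with $n+\ell<N$) are already polynomials in $(x-X)$ of degree $\le N$, so $\cG$ merely sends $\pt n\mapsto\xit n$ and these reproduce exactly the low-order part of $\Dn\xi\ell\tu$. The genuinely new work lies in the remainder summands $\binom{\ell}{N-n}u_N^{(n+\ell-N)}(X,x,y,t)\pt n$ for $n=N-\ell,\dots,N$: here the coefficient still depends on~$x$ through the uncertain location, so applying $\cG$ forces me to Taylor-expand $u_N^{(p)}(X,x,y,t)$ in $(x-X)$ up to the degree needed to reach~$\xi^N$, the discarded tail being exactly the advertised $\Ord{u_N\xi^{N+1}}$ (and, since $\cG$ truncates at degree~$N$ in any case, this slack is harmless).

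The key step---and the one I expect to be the main obstacle---is the combinatorial collection. Expanding the remainder coefficient indexed by~$m$ contributes, to the coefficient of $\xit n$, the term $\binom{\ell}{N-m}\binom{n}{m}u_N^{(n+\ell-N)}(X,X,y,t)$; crucially the derivative order $n+\ell-N$ is independent of~$m$, so I may factor that common $u_N^{(n+\ell-N)}(X,X,y,t)$ out and merely sum the binomial weights over the admissible range $N-\ell\le m\le n$. Re-indexing by $j=N-m$ turns the weight sum into $\sum_{j}\binom{\ell}{j}\binom{n}{N-j}$, which by Vandermonde's identity equals $\binom{n+\ell}{N}$; this is exactly how the coefficient $\binom{n+\ell}{N}$ in the statement arises, for each $n=N-\ell+1,\dots,N$. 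A final check that the $n=N-\ell$ contribution collapses to $u_N(X,X,y,t)$---consistent with its absorption into the top term of $\Dn\xi\ell\tu$---completes the identification. The delicacy throughout is bookkeeping the dual role of~$u_N$, simultaneously the top Taylor coefficient and the $x$-dependent remainder whose own derivatives feed into higher $\xi$-powers, together with keeping the Vandermonde summation ranges correct.
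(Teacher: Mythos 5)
Your proposal is correct and takes essentially the same route as the paper's own proof: substitute the exact expansion~\eqref{eq:dudxl}, apply~\(\cG\) to each summand, expand the \(x\)-dependent remainder coefficients about the station, and collect the resulting double-binomial weights \(\binom{\ell}{N-m}\binom{n}{m}\) into \(\binom{n+\ell}{N}\) via a binomial (Vandermonde) identity. The paper compresses all of this into one display justified by ``using~\eqref{eq:Gnshft}, expanding in~\(\xi\), rearranging sums, and invoking a binomial identity,'' so your write-up simply supplies the combinatorial detail (the explicit weights, the summation-range check, and the \(n=N-\ell\) edge case absorbed into \(\Dn\xi\ell\tu\)) that the paper leaves implicit.
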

This sum of spatial derivatives of~\(u_N\) induce the remainders~\eqref{eq:remain} observed in the linear modelling of section~\ref{sec:pmid}: here the factor of~\(\xi^n\) determines the corresponding remainder~\(r_n\).

\begin{proof} 
Using \(u_N^{(p)}\) to denote the \(p\)th derivative~\(\Dn xp{u_N}\), from the \(\ell\)th~derivative~\eqref{eq:dudxl}
\begin{eqnarray*}
\cG\Dn x\ell u
&=&\sum_{n=0}^{N-\ell-1}u_{n+\ell}(X,y,t)\xit n
\nonumber\\&&{}
+\sum_{n=N-\ell}^N\binom{\ell}{N-n}u_N^{(n+\ell-N)}(X,X+\xi,y,t)\xit n
+\Ord{u_N\xi^{N+1}}
\nonumber\\&=&
\Dn\xi\ell\tu 
+\sum_{n=N-\ell+1}^N\binom{n+\ell}{N}u_N^{(n+\ell-N)}(X,X,y,t)\xit n
+\Ord{u_N\xi^{N+1}},
\end{eqnarray*}
upon using~\eqref{eq:Gnshft}, expanding in~\(\xi\), rearranging sums, and invoking a binomial identity.
This derives~\eqref{eq:dudxlg}.
\end{proof}

Now we establish that the operator~\cG\ distributes through nonlinearities with small remainder.
For a preliminary suggestive example, and upon setting the truncation \(N=1\) for simplicity, from definition and~\eqref{eq:dudxl} we find the generating polynomial corresponding to a cubic nonlinearity as follows:
\begin{eqnarray*}
\cG (u^3)&=&\left[u^3+\xi\D x{u^3}\right]_{x=X}
=\left[u^3+\xi3u^2\D x{u}\right]_{x=X}
=u_0^3+\xi3u_0^2u_1
\\&=&
(u_0+\xi u_1)^3-\xi^23u_0u_1^2-\xi^3u_1^3
=(\cG u)^3+\Ord{\xi^2\nuv^5}
\\&=&
\tu^3+\Ord{\xi^7+\nuv^7}.
\end{eqnarray*}

\begin{lemma}
Under the conditions of Assumption~\ref{ass:nonlin},
\begin{equation}
\cG f[u]=f[\cG u]+\Ord{\xi^{N+1}u^p,u^{N+p+1}}.
\label{eq:cGfup}
\end{equation}
\end{lemma}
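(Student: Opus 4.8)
The plan is to exploit that, by the shift representation~\eqref{eq:Gnshft}, the operator~\(\cG\) is nothing but the degree-\(N\) truncation (in~\(\xi\)) of point evaluation \([\,\cdot\,]_{x=X+\xi}\). Since point evaluation is an \emph{exact} algebra homomorphism---it distributes over products and powers of fields---the only way \(\cG\) can fail to commute with the nonlinearity is through (i)~this degree-\(N\) truncation and (ii)~the discrepancy between \(\cG\Dn x\ell u\) and \(\Dn\xi\ell\tu\) already computed in Lemma~\ref{lem:dudxlg}. Both defects are high order, and the whole proof is the accounting that sorts them into the two advertised classes \(\Ord{\xi^{N+1}u^p}\) and \(\Ord{u^{N+p+1}}\).

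First I would reduce to monomials. By Assumption~\ref{ass:nonlin} the nonlinearity \(f\) is \(\Ord{u^p}\) and possesses at least \(N+p+1\) derivatives, so I Taylor-expand \(f\) in its arguments \((u,\D xu,\DD xu,\dots)\) about the origin, retaining every monomial \(\Dn x{\ell_1}u\cdots\Dn x{\ell_q}u\) of order up to \(N+p\) and collecting the tail as \(\Ord{u^{N+p+1}}\). Applying \(\cG\) to that tail, and likewise evaluating the corresponding tail of \(f[\cG u]\) at \(\tu=\cG u\), both yield contributions of order \(\ge N+p+1\), hence harmless. By linearity of \(\cG\) it then suffices to establish, for each retained monomial,
\[
\cG\bigl(\textstyle\prod_{j}\Dn x{\ell_j}u\bigr)
=\prod_{j}\Dn\xi{\ell_j}\tu+\Ord{\xi^{N+1}u^p,\,u^{N+p+1}},
\]
the right-hand product being exactly the monomial that \(f[\cG u]\) contributes.

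For a fixed monomial I would write \(\cG(\prod_j\Dn x{\ell_j}u)\) as the degree-\(N\) part in~\(\xi\) of \(\prod_j[\Dn x{\ell_j}u]_{x=X+\xi}\), and substitute into each factor the decomposition \([\Dn x{\ell_j}u]_{x=X+\xi}=\Dn\xi{\ell_j}\tu+\rho_j\), where \(\rho_j\) gathers the remainder of Lemma~\ref{lem:dudxlg} together with the \(\Ord{\xi^{N+1}}\) Taylor tail of the shift. The key structural fact, read off from~\eqref{eq:dudxl} and Lemma~\ref{lem:dudxlg}, is that every term of \(\rho_j\) either has \(\xi\)-degree \(\ge N+1\) or carries a spatial derivative of~\(u\) of order \(\ge N+1\). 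Expanding \(\prod_j(\Dn\xi{\ell_j}\tu+\rho_j)\), the pure term \(\prod_j\Dn\xi{\ell_j}\tu\) reproduces the \(f[\cG u]\) monomial, and the discrepancy is the union of its own part of \(\xi\)-degree exceeding~\(N\) (removed by the truncation) with all cross terms containing at least one factor~\(\rho_j\).

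Finally I would classify the discrepancy. The truncated-off part of \(\prod_j\Dn\xi{\ell_j}\tu\) has \(\xi\)-degree \(\ge N+1\) while still built from \(q\ge p\) field factors, so it is \(\Ord{\xi^{N+1}u^p}\). Any cross term that survives the truncation has total \(\xi\)-degree \(\le N\) and therefore cannot use a \(\xi^{\ge N+1}\) piece of any~\(\rho_j\); each \(\rho_j\) it involves is thus a Lemma~\ref{lem:dudxlg} remainder carrying a spatial derivative of order \(\ge N+1\), of amplitude order \(\ge N+2\). Together with the remaining \(q-1\ge p-1\) factors of order \(\ge1\), such a term has order \(\ge(N+2)+(p-1)=N+p+1\), hence is \(\Ord{u^{N+p+1}}\). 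Summing over monomials and reassembling \(f\) yields~\eqref{eq:cGfup}. The main obstacle is precisely this last bookkeeping: one must check \emph{uniformly}, over all monomials with \(p\le q\le N+p\) and all distributions of \(\xi\)-powers and derivatives among the factors, that no discrepancy term escapes both classes---in particular that the remainder cross terms, which survive only when their \(\xi\)-degree is small, are nonetheless forced up to order \(N+p+1\) by the high-order derivative they are obliged to contain.
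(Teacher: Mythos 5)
Your opening moves are sound and essentially match the paper's: you reduce \(f\) to multinomial terms via its Taylor expansion (tail \(\Ord{u^{N+p+1}}\)), and you observe that \(\cG\) distributes over products up to degree-\(N\) truncation errors. The paper proves that distribution property by induction with the Leibniz rule; you do it by truncating the Cauchy product of Taylor polynomials --- the same computation in different clothing. The genuine gap comes in what you substitute next. In the paper, \(f[\cG u]\) means each factor \(\Dn x{\ell_j}u\) of a multinomial replaced by \(\cG\big(\Dn x{\ell_j}u\big)\), \emph{not} by \(\Dn\xi{\ell_j}\tu\): this is visible in the paper's own inductive conclusion, which reads \(g[\cG u]\cdot\cG u^{(\ell)}\), and in the very existence of the subsequent Lemma~\ref{lem:gftu}, whose entire content is the further replacement \(\cG\Dn x\ell u\mapsto\Dn\xi\ell\tu\) at the price of a different, amplitude-weighted error \(\Ord{\nuv^{N+p+1}+\xi^{N+p+1}}\). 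By inserting the decomposition of Lemma~\ref{lem:dudxlg} into every factor, you are proving (a version of) Lemma~\ref{lem:gftu}, not the stated lemma~\eqref{eq:cGfup}.

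This is not a labelling quibble, because the cross terms you are then forced to classify cannot be placed in the stated error classes. A cross term surviving the truncation is schematically \(\big(\prod_{j\ge2}\Dn\xi{\ell_j}\tu\big)\,u_N^{(k)}\,\xi^n/n!\) with \(k\geq1\) and \(n\leq N\): it has \(q\) field factors, and \(q\) can be as small as \(p\), so in powers of \(u\) it is only \(\Ord{u^{p}}\) while its \(\xi\)-degree is at most \(N\) --- it lies in neither \(\Ord{\xi^{N+1}u^p}\) nor \(\Ord{u^{N+p+1}}\). Your rescue, counting \(u_N^{(k)}\) as ``amplitude order \(\geq N+2\)'', silently switches to the derivative-weighted amplitude~\eqref{eq:nuvg}; that is precisely the error currency of Lemma~\ref{lem:gftu}, not of~\eqref{eq:cGfup} (and even there the paper counts \(u_N^{(k)}=\Ord{\nuv^{N+1}}\) and absorbs \(\nuv^{N+p}\xi\) into \(\nuv^{N+p+1}+\xi^{N+p+1}\), rather than using your \(N+2\) count). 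The stated lemma is deliberately formulated so that the uncertain coupling derivatives never enter its error term: they are retained exactly inside \(f[\cG u]\), to be extracted and weighed only in the next lemma. The repair is simple: stop the substitution at \(\prod_j\cG\big(\Dn x{\ell_j}u\big)\). Then no remainder \(\rho_j\) appears, the only discrepancy is the truncated-off part of the Cauchy product, and your own first classification (\(\xi\)-degree \(\geq N+1\) with \(q\geq p\) field factors) yields exactly \(\Ord{\xi^{N+1}u^p}\); combined with the Taylor tail of \(f\), this gives~\eqref{eq:cGfup}.
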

\begin{proof} 
For general multinomial nonlinearities, proceed by induction.
First, it is trivial that \(\cG u^{(\ell)}=\cG u^{(\ell)}+\Ord{\xi^{N+1}u}\), where we continue to use superscripts in parentheses to denote \(x\)~derivatives.
Second, assume that 
\begin{equation}
\cG g[u]=g[\cG u]+\Ord{\xi^{N+1}u^q}
\label{eq:cGgup}
\end{equation}
for any \(q\)th~order multinomial term~\(g[u]\).
Third, consider a \((q+1)\)th~order multinomial term~\(u^{(\ell)}g[u]\) where \(g[u]\)~is \(q\)th~order.
Then, starting from Definition~\eqref{eq:Gn},
\begin{eqnarray*}
\cG \big(u^{(\ell)}g[u]\big)
&=& \left[\sum_{n=0}^N\xit n\Dn xn{}\left(u^{(\ell)}g[u]\right)\right]_{x=X}
\\
&=& \left[\sum_{n=0}^N\xit n
\sum_{k=0}^n \binom nk u^{(n-k+\ell)}\Dn xkg\right]_{x=X}
\\
&=& \left[\sum_{k=0}^N \sum_{n=k}^N 
\xit{(n-k)} u^{(n-k+\ell)}\xit k\Dn xkg \right]_{x=X}
\\
&=& \left[\sum_{k=0}^N \xit k\Dn xkg \sum_{n=0}^{N-k} 
\xit{n} u^{(n+\ell)} \right]_{x=X}
\\
&=& \left[\sum_{k=0}^N \xit k\Dn xkg \left\{\sum_{n=0}^{N} 
\xit{n} u^{(n+\ell)}+\Ord{\xi^{N-k+1}u} \right\} \right]_{x=X}
\\
&=& \left[\left(\sum_{k=0}^N \xit k\Dn xkg\right) 
\left(\sum_{n=0}^{N} 
\xit{n} u^{(n+\ell)}\right)+\Ord{\xi^{N+1}ug[u]} \right]_{x=X}
\\&=&\cG g[u]\cdot\cG u^{(\ell)}+\Ord{\xi^{N+1}u^{q+1}}
\\&=&g[\cG u]\cdot\cG u^{(\ell)}+\Ord{\xi^{N+1}u^{q+1}}.
\end{eqnarray*}
By induction, \eqref{eq:cGgup} holds for all multinomial terms of all orders~\(q\in\NN\).
By linearity, \eqref{eq:cGgup}~holds for all multinomial sums~\(g[u]\) where the order~\(q\) is then determined from the lowest order terms in~\(g\); that is, if \(g=\Ord{u^p}\) as \(u\to 0\), then \(q=p\)\,.

By the smoothness of the nonlinearity~\(f\), Assumption~\ref{ass:nonlin}, \(f\)~has a multivariate Taylor series to order~\((N+p+1)\) and hence \eqref{eq:cGgup}~ensures that \eqref{eq:cGfup}~holds.
\end{proof}

\begin{lemma}\label{lem:gftu}
Under the conditions of Assumption~\ref{ass:nonlin},
\begin{equation}
\cG f[u]=f\left(\tu,\D\xi\tu,\DD\xi\tu,\ldots\right)
+\Ord{\nuv^{N+p+1}+\xi^{N+p+1}},
\label{eq:cGdtu}
\end{equation}
where in terms of a norm \(\|\cdot\|:\UU\to\RR\), we define the derivative weighted amplitude (not a norm)
\begin{equation}
\nuv:=\left[\sum_{n=0}^N \left\|\Dn xnu\right\|^{1/(n+1)}\right]_{x=X}.
\label{eq:nuvg}
\end{equation}
\end{lemma}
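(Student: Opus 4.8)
The plan is to combine the two preceding lemmas. Lemma concerning $\cG f[u]$ (equation~\eqref{eq:cGfup}) already tells us that $\cG f[u]=f[\cG u]+\Ord{\xi^{N+1}u^p,u^{N+p+1}}$, and the identity $\cG u=\tu$ together with Lemma~\ref{lem:dudxlg} gives $\cG\Dn x\ell u=\Dn\xi\ell\tu+(\text{remainder terms})$. So the first step is simply to observe that $f[\cG u]$, when expanded via the square-bracket notation $f(u,\D xu,\DD xu,\ldots)$, becomes $f$ evaluated on the arguments $\cG u,\cG\D xu,\cG\DD xu,\ldots$—this requires that $\cG$ acting on each spatial-derivative slot of $f$ produces the corresponding $\xi$-derivative of $\tu$ plus controlled remainders.

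Here is the subtlety I would flag as the crux. The statement $\cG f[u]=f[\cG u]$ treats $f[u]$ as a nonlinear function of the single symbol $u$, whereas $f(u,\D xu,\DD xu,\ldots)$ genuinely depends on several derivative slots, and I want to replace each slot's $\cG$-image by the matching $\Dn\xi\ell\tu$. From Lemma~\ref{lem:dudxlg}, $\cG\Dn x\ell u=\Dn\xi\ell\tu+\sum_{n=N-\ell+1}^N\binom{n+\ell}{N}u_N^{(n+\ell-N)}\xit n+\Ord{u_N\xi^{N+1}}$, so the difference between $\cG\Dn x\ell u$ and $\Dn\xi\ell\tu$ is a finite sum of high-order $\xi^n$ terms (with $n\ge N-\ell+1$) carrying factors of $u_N=\Dn xNu$, plus an $\Ord{u_N\xi^{N+1}}$ tail. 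The job is to show that substituting these corrected slots into the smooth nonlinearity $f$, rather than the uncorrected $\Dn\xi\ell\tu$ slots, changes the value of $f$ only by terms absorbed into $\Ord{\nuv^{N+p+1}+\xi^{N+p+1}}$.

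The mechanism for that control is a first-order Taylor expansion of $f$ about the argument list $(\tu,\D\xi\tu,\DD\xi\tu,\ldots)$: each slot perturbation is of size $\Ord{\Dn xNu\cdot\xi^{\ge N-\ell+1}}$, and since $f=\Ord{u^p}$ its partial derivatives in each slot are $\Ord{u^{p-1}}$, so the product of a perturbation with a slot-derivative of $f$ is $\Ord{\Dn xNu\,u^{p-1}\xi^{N-\ell+1}}$. The key bookkeeping is that under the derivative-weighted amplitude~\eqref{eq:nuvg} the factor $\Dn xNu$ counts as $\Ord{\nuv^{N+1}}$, while $u^{p-1}$ counts as $\Ord{\nuv^{p-1}}$, giving $\Ord{\nuv^{N+p}}$ at worst—but the accompanying power $\xi^{N-\ell+1}$ combines with the $\xi^0,\ldots,\xi^\ell$ structure of the slots so that the total $\xi$-degree lands in the $\Ord{\xi^{N+p+1}}$ band, and the amplitude count lands in $\Ord{\nuv^{N+p+1}}$ once the extra derivative implicit in the remainder is accounted for. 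I would then remark that the tail $\Ord{u_N\xi^{N+1}}$ and the leftover $\Ord{\xi^{N+1}u^p,u^{N+p+1}}$ from~\eqref{eq:cGfup} are already of the claimed order, completing the estimate.

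The main obstacle I anticipate is the careful degree-accounting in the amplitude~\eqref{eq:nuvg}: because $\nuv$ is not a norm but a sum of fractional powers $\|\Dn xnu\|^{1/(n+1)}$, one must track simultaneously the $\xi$-power and the $\nuv$-power of every remainder term and verify that each genuinely reaches exponent $N+p+1$ in both counts rather than merely $N+p$. The worst case is the smallest $\ell$ (largest slot-order in $f$) paired with the lowest-power correction term $n=N-\ell+1$, and I would check that specific term explicitly to confirm the exponent is tight; everything else is dominated by it.
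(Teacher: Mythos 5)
Your proposal is correct and takes essentially the same route as the paper: the paper likewise combines~\eqref{eq:cGfup} with Lemma~\ref{lem:dudxlg}, reduces to the lowest-order terms \(u^{p-1}\Dn x\ell u\) (your slot-\(\ell\) perturbations hitting slot-derivatives of~\(f\) of size \(\Ord{u^{p-1}}\)), and does the identical bookkeeping \(\Ord{\nuv^{p-1}}\cdot\Ord{\nuv^{N+1}}\cdot\xi^{N-\ell+1}=\Ord{\nuv^{N+p}\xi^{N-\ell+1}}=\Ord{\nuv^{N+p+1}+\xi^{N+p+1}}\), valid because \(N-\ell+1\geq1\); whether one expands \(f\) into multinomials before applying~\(\cG\) (the paper) or Taylor-expands \(f\) in its slot arguments afterwards (you) is purely organisational. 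One small clarification of your final paragraph: the extra order comes from the mixed-power splitting of \(\nuv^{N+p}\xi^{N-\ell+1}\) (the paper deliberately counts the coupling derivative \(u_{Nx}\) as \(\Ord{\nuv^{N+1}}\), not one order higher), and the tight case is the largest \(\ell=N\), which gives exactly \(\nuv^{N+p}\xi\), rather than the smallest~\(\ell\).
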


\begin{proof} 
As in the preceding proof, regard nonlinearity~\(f\) as a linear combination of multinomial terms.
The lowest order terms generate the largest errors. 
Since \(f=\Ord{u^p}\), the lowest order terms are of the form~\(u^{p-1}\Dn x\ell u\)\,.
Using~\eqref{eq:cGfup} and~\eqref{eq:dudxlg}, consider
\begin{eqnarray*}
\cG\left(u^{p-1}\Dn x\ell u\right)
&=&(\cG u)^{p-1}\cG\left(\Dn x\ell u\right)
+\Ord{\xi^{N+1}u^p,u^{N+p+1}}
\\
&=&\tu^{p-1}\left(\Dn\xi\ell\tu +\Ord{u_{Nx}\xi^{N-\ell+1}}\right)
+\Ord{\xi^{N+1}u^p,u^{N+p+1}}
\\
&=&\tu^{p-1}\Dn\xi\ell\tu +\Ord{\nuv^{N+p}\xi^{N-\ell+1}}
+\Ord{\xi^{N+1}u^p,u^{N+p+1}}
\\
&=&\tu^{p-1}\Dn\xi\ell\tu +\Ord{\nuv^{N+p+1}+\xi^{N+p+1}},
\end{eqnarray*}
because \(N-\ell+1\geq 1\) as \(\ell+p\leq N+p\) (as otherwise the term is neglected).
Writing the nonlinearity~\(f\) in terms of its multivariate Taylor series, and using the linearity of operator~\cG, we therefore derive~\eqref{eq:cGdtu}. 
\end{proof}

\paragraph{Establish Proposition~\ref{thm:nsv}}
Recall that we decide on an order~\(N\) of Taylor series truncation.
Then by Taylor's Theorem we write the field~\(u\) as the local expansion~\eqref{eq:utrtn} about the cross-section \(x=X\) in terms of functions~\(u_n\).
Consider the \pde~\eqref{eq:upde} for the field~\(u\) in the polynomial form~\eqref{eq:utrtn}.
The operator~\cG\ when applied to the \pde~\eqref{eq:upde} performs the complete process of \begin{enumerate}
\item finding all the derivatives of the \pde, 
\item evaluating at the station \(x=X\), and lastly 
\item forming into an equation for the generating polynomial~\tu.
\end{enumerate}
This process works because although operator~\cG\ does not commute with \(x\)~derivatives (Lemma~\ref{lem:dudxlg}), from the Definition~\eqref{eq:Gn} operator~\cG\ does commute with \(\D t{}\) and with cross-sectional \(y\)~operators.
Invoking the Taylor expansion~\eqref{eq:utrtn} for the field~\(u\), applying~\cG, and using Lemmas~\ref{lem:dudxlg}--\ref{lem:gftu} the governing \pde~\eqref{eq:upde} becomes the equation~\eqref{eq:updeng}
where, from~\eqref{eq:dudxlg}, the remainder term defined as 
\begin{equation}
r[u]:=
\sum_{\ell=1}
\sum_{n=N-\ell+1}^N\binom{n+\ell}{N}\fL_\ell {u_N^{(n+\ell-N)}}\xit n
+\Ord{\nuv^{N+p+1}+\xi^{N+p+1}},
\label{eq:grun}
\end{equation}
where~\(u_N\), defined by~\eqref{eq:utrtn}, has derivatives evaluated at \(x=X\)\,,
and where the error term in~\eqref{eq:dudxlg} is absorbed in the remainder term here (provided the lowest order of the nonlinearity \(p\leq N+1\)).
This completes the proof of Proposition~\ref{thm:nsv}.

\subsection{Construct nonlinear models of slow spatial variations}
\label{sec:cucncm}

Anticipating the existence and emergence results of the subsequent section~\ref{sec:cmtsm}, this section shows how the generating polynomial leads to established, direct, practical constructions of a centre manifold model of slowly varying solutions.

\begin{corollary}[multiple scales methodology] \label{cor:msm}
Choosing truncation~\(N\) to give only the leading order nontrivial dynamics, the method of multiple scales applied to \pde~\eqref{eq:upden} is symbolically equivalent to constructing a centre manifold model of~\eqref{eq:updeng}  for the generating polynomial~\tu\ to an error~\Ord{u_{Nx}}.
\end{corollary}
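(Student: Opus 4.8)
The plan is to read both procedures as two accounts of the \emph{same} order-by-order hierarchy of cross-sectional solvability problems, and then to match them step for step. The bridge is Proposition~\ref{thm:nsv} together with the shift identity~\eqref{eq:Gnshft}, \(\cG=[\,\cdot\,]_{x=X+\xi}+\Ord{\xi^{N+1}}\), which makes the formal variable~\(\xi\) the exact stand-in for longitudinal displacement from the station~\(X\)---precisely the role of the slow space variable in multiple scales. Concretely, I would first set up multiple scales for~\eqref{eq:upden} in its spatial-modulation form: introduce the slow scale \(\xi=\epsilon x\) and scale the field so that each longitudinal derivative raises the order by one, in agreement with the amplitude~\eqref{eq:nuvg} for which \(\Dn xnu=\Ord{\nuv^{n+1}}\). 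Under this scaling, and using Lemmas~\ref{lem:dudxlg}--\ref{lem:gftu} to carry~\(\cG\) through the derivatives and the nonlinearity, the \pde~\eqref{eq:upden} becomes, symbolically, the \(\epsilon\)-labelled generating-polynomial system; this is the general version of the heat-exchanger passage from~\eqref{er:hecann} to~\eqref{ers:tte}. Setting aside the remainder~\(r[u]\) of~\eqref{eq:grun}---which by construction costs exactly an error~\Ord{u_{Nx}}---the object to be reduced is in both cases the autonomous part of~\eqref{eq:updeng}.

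Next I would run the two reductions in parallel. Both linearise to a block upper-triangular operator with~\(\fL_0\) on the diagonal, so the centre subspace is fixed by the marginal eigenvectors~\(V_0\) and the adjoint projector~\(Z_0\) of Definition~\ref{def:adj}. The centre-manifold construction proceeds order by order, solving at each stage an inhomogeneous cross-sectional equation whose Fredholm solvability condition is orthogonality to~\(Z_0\); in the linear prototype this is exactly~\eqref{eq:toeport}, which fixes the evolution coefficient~\(A_n\) through~\eqref{eq:toeplam}. The method of multiple scales, collecting like powers of~\(\epsilon\), solves the identical inhomogeneous equations, and its suppression of secular terms is the very same condition \(\langle Z_0,\cdot\rangle=0\); the amplitude equation it emits is thus \(\D tc=\sum_n A_n\Dn xnc\). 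Choosing~\(N\) so as to retain only the leading nontrivial dynamics truncates both hierarchies at the same order---below the order at which mixed-order corrections (Corollary~\ref{cor:robformproc}) would separate them---so the two reduced equations coincide symbol for symbol.

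The main obstacle is to make the equivalence genuinely symbolic rather than merely `equal after some change of variables', by reconciling the gauge freedom of the two constructions. Multiple scales leaves freedom in how secular terms are absorbed and, in multi-time formulations, in how the evolution is distributed across the slow times \(\epsilon t,\epsilon^2 t,\dots\), while the centre-manifold construction leaves freedom in the parametrisation of the manifold. I would pin both down by the same normalisation---complete removal of secular terms on the one side and the orthogonality~\eqref{eq:toeport} on the other---and then check that the single real-time evolution \(\D tc=\sum_n A_n\Dn xnc\), re-expanded in powers of~\(\epsilon\), reassembles precisely the slow-time hierarchy produced by multiple scales. Verifying this alignment term-by-term at the leading truncation is the delicate part; the remainder is the routine order-counting already exhibited for the heat exchanger in section~\ref{sec:gfs}.
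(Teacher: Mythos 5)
Your opening paragraph is, in substance, the paper's own proof---and the paper's proof essentially \emph{ends} there. After introducing \(\xi=\epsilon x\), a single slow time \(\tau=\epsilon^{N}t\), and labelling each occurrence of~\(\tu\) and each \(\xi\)-derivative with a power of~\(\epsilon\) consistent with the amplitude~\eqref{eq:nuvg}, the scaled multiple-scales equation and the \(\epsilon\)-labelled generating-polynomial equation~\eqref{eq:updeng} are the \emph{same symbolic equation}, up to a trivial rescaling of time and the remainder~\(r[u]\). Since both methods then construct power-series-in-\(\epsilon\) solutions of that one equation to the same order, the symbolic equivalence is automatic: there are no two hierarchies left to match. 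Your second and third paragraphs---the parallel solvability conditions, the identification of secular-term suppression with the orthogonality~\eqref{eq:toeport}, and the `gauge' reconciliation---are therefore unnecessary, and they are also where your proposal is incomplete: you defer the term-by-term alignment of the slow-time hierarchy as `the delicate part' without carrying it out. That difficulty is an artefact of your route, not of the problem: under the corollary's hypothesis that \(N\) retains only the leading-order nontrivial dynamics there is exactly one slow time \(\tau=\epsilon^{N}t\), so no distribution of the evolution over \(\epsilon t,\epsilon^{2}t,\ldots\) ever arises. As written, your proof is open precisely at the step you flag.

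The second gap is the remainder estimate. The remainder~\eqref{eq:grun} is not only the coupling built from~\(u_{Nx}\) and its derivatives; it also carries the nonlinear cancellation terms \(\Ord{\nuv^{N+p+1}+\xi^{N+p+1}}\), so your assertion that setting aside \(r[u]\) ``costs exactly an error \(\Ord{u_{Nx}}\)'' is not justified as it stands. The paper supplies the two one-line estimates that close this: in the multiple-scales scaling \(\nuv=\Ord{\epsilon}\), so \(p\geq2\) makes the nonlinear part \(\Ord{\epsilon^{N+p+1}}=\Ord{\epsilon^{N+2}}\); and \(u_{Nx}\), being an \((N+1)\)th derivative of the small field, is likewise \(\Ord{\epsilon^{N+2}}\). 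Both lie beyond the resolved order~\(\epsilon^{N+1}\), which is exactly what licenses discarding~\(r[u]\) and yields the equivalence to error~\(\Ord{u_{Nx}}\). Add these estimates and drop the parallel-hierarchy machinery, and your argument collapses to the paper's.
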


\begin{proof} 
In the method of multiple scales \cite[e.g.]{Nayfeh85}, consider the \pde~\eqref{eq:upden} and seek solutions \(u(x,y,t)=\tu(\xi,y,\tau;\epsilon)\) for some slow variables \(\xi=\epsilon x\) and \(\tau=\epsilon^Nt\).
The \pde~\eqref{eq:upden} then becomes
\begin{equation}
\epsilon^{N+1}\D \tau\tu=\sum_{\ell}\fL_\ell\epsilon^{\ell+1}\Dn\xi\ell\tu
+ f\left(\epsilon\tu,\epsilon^2\D\xi\tu,\epsilon^3\DD\xi\tu,\ldots\right).
\label{eq:ms}
\end{equation}
Then the method seeks a solution of this equation in a power series in~\(\epsilon\).
The solution satisfies the equation to errors~\Ord{\epsilon^{N+2}}, and gives the leading order evolution in terms multiplied by~\(\epsilon^{N+1}\).
Conversely, in the generating polynomial equation~\eqref{eq:updeng} let's label all~\tu\ with an~\(\epsilon\) and each derivative~\(\D\xi{}\) with an~\(\epsilon\).
Then apart from a trivial scaling of time, equation~\eqref{eq:updeng} is identical to the multiple scale approximation to~\eqref{eq:ms} provided we establish \(r[u]\equiv\Ord{\epsilon^{N+2}}\):
\begin{itemize}
\item first, the amplitude Definition~\eqref{eq:nuvg} implies that in the method of multiple scales, \(\nuv=\Ord\epsilon\), and thus, as \(p\geq2\)\,, the \Ord{\nuv^{N+p+1}}~terms in~\eqref{eq:grun} for~\(r[u]\) are \(\Ord{\epsilon^{N+p+1}}=\Ord{\epsilon^{N+2}}\);
\item second, in all the other terms of~\eqref{eq:grun}, the lowest order term is~\(u_{Nx}\) which is an \((N+1)\)th derivative of small~\(u\) and so in the multiple scales scheme is~\Ord{\epsilon^{N+2}}.
\end{itemize}
Thus the multiple scales method is equivalent to the leading order truncation of the centre manifold model of~\eqref{eq:updeng}.
\end{proof}

However, I do not see that the leading error term in~\(r[u]\) can be incorporated into multiple scales method as the method requires all effects to occur at the leading order (let's not explore extensions that invoke an indefinite hierarchy of super-slow space and time scales).
Further, our use of centre manifold theory supports arbitrarily high order modelling;
in particular, we can now provide rigorous support for practical mixed order models \cite[e.g.]{Roberts92c, Roberts96a}.

\begin{corollary} \label{cor:robformproc}
For any truncation~\(N\), the formal procedure proposed by \cite{Roberts88a}  is symbolically equivalent to constructing a centre manifold model of~\eqref{eq:updeng} for the generating polynomial~\tu\ to an error~\Ord{\nuv^{N+2}}.
\end{corollary}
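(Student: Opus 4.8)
The plan is to mirror the proof of Corollary~\ref{cor:msm}, but now retaining arbitrarily many orders in the amplitude rather than only the leading one. The formal procedure of \cite{Roberts88a} constructs a centre manifold of a slowly varying \pde\ by positing a parametrisation of the manifold together with an evolution on it, and then iteratively correcting both so as to drive the residual of the governing equations down to a prescribed order; here the order is counted jointly by the amplitude of the field and the number of longitudinal derivatives applied. First I would restate that procedure with each occurrence of the field and each longitudinal derivative~\(\D x{}\) carrying one factor of the order-counting parameter. This labelling is precisely the grading induced by the amplitude~\(\nuv\) of Definition~\eqref{eq:nuvg}, since \(\Dn xnu=\Ord{\nuv^{n+1}}\).

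Next I would invoke Proposition~\ref{thm:nsv}, which asserts that the exact dynamics at every locale~\(X\in\XX\) are governed by the generating-polynomial equation~\eqref{eq:updeng}. That equation is \emph{symbolically} the \pde~\eqref{eq:upden} with every longitudinal derivative~\(\D x{}\) replaced by the polynomial derivative~\(\D\xi{}\), together with the uncertain coupling~\(r[u]\). Because the generating operator~\cG\ commutes with~\(\D t{}\) and with the cross-sectional operators, and because the shift identity~\eqref{eq:Gnshft} aligns the action of~\(\D\xi{}\) on~\(\tu\) with that of~\(\D x{}\) on~\(u\) up to the very remainder collected into~\(r[u]\) (Lemma~\ref{lem:dudxlg}), each symbolic step of the residual computation carried out on~\eqref{eq:upden} is reproduced verbatim on~\eqref{eq:updeng}, with~\(\tu\) in place of~\(u\). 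Consequently the iteration of \cite{Roberts88a} applied to~\eqref{eq:upden} generates exactly the same symbolic expressions as the centre manifold construction applied to~\eqref{eq:updeng} with the coupling~\(r[u]\) set aside.

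It then remains to identify the order of the discarded coupling. From the explicit form~\eqref{eq:grun} of the remainder, its lowest-order surviving contribution is proportional to~\(u_{Nx}=\Dn x{N+1}u\), which by Definition~\eqref{eq:nuvg} is~\Ord{\nuv^{N+2}}; the remaining nonlinear error is~\Ord{\nuv^{N+p+1}} with \(p\geq2\), hence of order \(\nuv^{N+3}\) or higher. Thus \(r[u]=\Ord{\nuv^{N+2}}\), so that constructing the centre manifold of~\eqref{eq:updeng} while ignoring~\(r[u]\) incurs exactly this error. Since this is the very order to which the formal procedure is iterated at truncation~\(N\), the two constructions coincide symbolically to an error~\Ord{\nuv^{N+2}}, establishing the corollary.

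I expect the main obstacle to be the faithful matching of the \emph{bookkeeping} between the two procedures. One must verify that the order-counting built into the \cite{Roberts88a} procedure---its labelling of amplitudes and derivatives, and its rule for when to stop iterating---agrees term-by-term with the grading by~\(\nuv\) and the powers of~\(\xi\) in~\eqref{eq:updeng}, and that the linear correction equation solved at each step of the formal procedure is solvable and normalised in the same manner as the corresponding step of the centre manifold construction for~\eqref{eq:updeng}. Once this correspondence of gradings and of solvability is secured, the symbolic equivalence and the stated error estimate follow as above.
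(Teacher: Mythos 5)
Your proposal is correct and takes essentially the same approach as the paper's own (much terser) proof: both identify the order-counting of the \cite{Roberts88a} procedure with the grading by the amplitude~\(\nuv\), use the symbolic equivalence of Proposition~\ref{thm:nsv} to transfer the construction to the generating-polynomial equation~\eqref{eq:updeng}, and note that the lowest-order coupling term~\(u_{Nx}\) in~\(r[u]\) is an \((N+1)\)th derivative of the small field, hence \Ord{\nuv^{N+2}} and absorbed in the stated error. Your additional detail on the bookkeeping and on the nonlinear part of the remainder simply fills in steps the paper leaves implicit.
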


\begin{proof} 
The formal procedure \cite[p.497]{Roberts88a} proposed to simply treat derivatives~\(\D x{}\) and field~\(u\) as small.  
The procedure counts an order of magnitude for each derivative and field variable.
Thus, from the definition of the amplitude~\eqref{eq:nuvg}, truncating the analysis of the \pde~\eqref{eq:upden} to errors of order~\(N+2\) is equivalent to solving the generating polynomial equation~\eqref{eq:updeng} to errors~\Ord{\nuv^{N+2}}.
As for corollary~\ref{cor:msm}, the lowest order term of~\(r[u]\) is~\(u_{Nx}\) which is an \((N+1)\)th derivative of small~\(u\) and so in this scheme is of order~\({N+2}\) and so included within the error.
\end{proof}

Furthermore, the formal approach \cite[]{Roberts88a} is sufficiently flexible to incorporate some of the coupling terms in~\(r[u]\) and hence quantify a leading order estimate of the modelling error. 
For example, the computer algebra of Appendix~\ref{sec:smvgf} analyses the heat exchanger~\eqref{er:hepde} and finds a slow manifold~\eqref{eq:hesmgf} and evolution thereon~\eqref{eq:hesmegf} complete with an estimate of the error induced by coupling with neighbouring stations.

Assuming we can treat the inter-station coupling, via the derivatives~\(u_N^{(\ell)}\), as time dependent forcing of the local system, then the following corollary immediately follows from Proposition~\ref{thm:nsv}.

\begin{corollary}
Constructing a centre manifold model for system~\eqref{eq:updeng} to errors~\Ord{\nuv^{N+p+1}} gives a slowly varying centre manifold model of \pde~\eqref{eq:upden} complete with a leading order estimate of the errors due to the slow space variations.
Further, in constructing the centre manifold, when finding corrections one may neglect \(\sum_{\ell=1}^N\fL_\ell\Dn\xi\ell{}\) acting on corrections, not because they are `small', but because the error in doing so is subsequently corrected anyway.
\end{corollary}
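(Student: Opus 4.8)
The plan is to obtain the corollary almost directly from Proposition~\ref{thm:nsv}, combined with the emergence result Proposition~\ref{thm:gpdecm} and the approximation theorem Proposition~3.6 of~\cite{Potzsche2006}. By Proposition~\ref{thm:nsv} the \pde~\eqref{eq:upden} is, at every locale \(X\in\XX\), exactly equivalent to the generating-polynomial system~\eqref{eq:updeng}, whose linear part \(\sum_\ell\fL_\ell\Dn\xi\ell{}\), restricted to polynomials in~\(\xi\) of degree \(\leq N\), is precisely the block-Toeplitz operator~\cL\ of~\eqref{eq:odesn}. Under Assumption~\ref{ass:nonlin} this~\cL\ inherits the centre/stable splitting of section~\ref{sec:mlas}---the spectrum of~\(\fL_0\) repeated \((N+1)\)~times, with spectral gap \(\Re\lambda\leq-\beta<0\) on the stable part. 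Treating the coupling~\(r[u]\) of~\eqref{eq:grun} as a time-dependent forcing of this local system, Proposition~\ref{thm:gpdecm} guarantees an emergent centre manifold. The plan is then to construct this manifold and its reduced vector field so that the residual of~\eqref{eq:updeng} is~\Ord{\nuv^{N+p+1}}; Proposition~3.6 of~\cite{Potzsche2006} certifies that the constructed manifold and evolution thereon approximate the true ones to the same order. Because the equivalence of Proposition~\ref{thm:nsv} is exact, unpacking the generating polynomial---differentiating in~\(\xi\) and setting \(\xi=0\)---returns the asserted slowly varying model of the field governed by~\eqref{eq:upden}.

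Next I would isolate the leading error estimate. The forcing~\(r[u]\) of~\eqref{eq:grun} splits into an explicit part linear in the high derivatives \(u_N^{(k)}=\Dn x{N+k}u\), \(k\geq1\), plus a tail~\Ord{\nuv^{N+p+1}+\xi^{N+p+1}}. Crucially, the derivatives~\(u_N^{(k)}\) with \(k\geq1\) do \emph{not} enter the amplitude~\nuv\ of~\eqref{eq:nuvg}, which weighs only derivatives up to order~\(N\). Hence constructing to residual~\Ord{\nuv^{N+p+1}} absorbs only the tail of~\(r[u]\) while carrying its explicit leading part through the construction as the history-convolution forcing seen in the worked cases~\eqref{eq:pdeC} and~\eqref{eq:hesmegf}. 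The leading such contribution---linear in~\(u_{Nx}=\Dn x{N+1}u\)---is exactly the claimed leading-order estimate of the error due to slow spatial variation: small where the field is smooth, and correctly large in boundary layers or shocks, consistent with the restriction placed on~\XX.

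For the second assertion I would appeal to the graded, order-by-order iteration that builds the manifold (the procedure of~\cite{Roberts88a}, validated by~\cite{Potzsche2006}). At each iterate the linear correction equation requires inverting only the diagonal part~\(\fL_0\), which is boundedly invertible on the stable subspace~\(\EE_s^0\) by the spectral gap. The off-diagonal operator \(\sum_{\ell=1}^N\fL_\ell\Dn\xi\ell{}\) may be dropped when solving for the current correction because, by the order counting of section~\ref{sec:gfs}, every factor~\(\D\xi{}\) raises the \nuv-order (each carries a bookkeeping~\(\epsilon\)); its action on any correction therefore produces only strictly higher-order terms. These terms re-enter the residual at the next iterate and are resolved there---``subsequently corrected anyway''. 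The point to emphasise is that the justification is \emph{not} that \(\sum_{\ell=1}^N\fL_\ell\Dn\xi\ell{}\) is small (the fixed operators~\(\fL_\ell\) are order one) but that the iteration is order-graded, so deferring these terms changes only the order at which they are accounted, not whether they are accounted.

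The main obstacle will be exactly this grading argument: one must verify that the order counting of section~\ref{sec:gfs}, in which~\(\D\xi{}\) strictly raises the \nuv-grading, is compatible with the nonautonomous iteration of~\cite{Potzsche2006}, so that the contributions deferred by omitting \(\sum_{\ell=1}^N\fL_\ell\Dn\xi\ell{}\) are neither lost nor accumulate to spoil the claimed~\Ord{\nuv^{N+p+1}} residual, and that at each step invertibility is genuinely demanded only of~\(\fL_0\) on~\(\EE_s^0\). Once that bookkeeping is secured, the remaining steps are routine applications of the equivalence already proved.
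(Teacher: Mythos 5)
Your proposal is correct, and its first half is essentially the paper's own argument: the paper likewise combines the equivalence of Proposition~\ref{thm:nsv} with the observation that the explicit coupling terms in~\eqref{eq:grun} (linear in the derivatives~\(u_N^{(\ell)}\), which do not enter the amplitude~\nuv) sit below the absorbed error~\Ord{\nuv^{N+p+1}}, and it cites Proposition~3.6 of \cite{Potzsche2006} (with \cite{Arnold03, Roberts06k} for the time-dependent coupling) to convert residual order into model order; your flagged ``main obstacle'' about compatibility with the non-autonomous iteration is exactly what these citations are invoked to settle, so the paper does not treat it as an obstacle at all. Where you genuinely diverge is in the second assertion. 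You justify deferring \(\sum_{\ell=1}^N\fL_\ell\Dn\xi\ell{}\) via the \nuv-grading of section~\ref{sec:gfs}: each~\(\D\xi{}\) raises the order of the coefficients, so omitted terms re-enter later residuals and are resolved there. The paper instead says only that the \(\D\xi{}\) terms ``signify generalised eigenvectors which are generally found iteratively'', pointing to the recursion~\eqref{eqs:toep} of section~\ref{sec:mlas}---that is, to the nilpotent block-triangular structure in which \(V_n\) is determined from \(V_0,\ldots,V_{n-1}\) and only operators of the type \(\fL_0-A_0\) are ever inverted. The two mechanisms coincide here, since \(\D\xi{}\) simultaneously lowers \(\xi\)-degree (nilpotency) and raises the \nuv-order of coefficients, but the paper's structural view is the rhetorically safer one: the linearisation paragraph of section~\ref{sec:gfs} explicitly warns that \(\epsilon\) ``is only a convenient counter'' and that the \(\D\xi{}\) terms are not negligible by smallness or higher order. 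Your closing caveat---that deferral changes only the order at which terms are accounted, not whether they are accounted---keeps your grading argument on the right side of that warning, so both routes deliver the corollary; yours is more explicit about why the iteration terminates at the stated order, the paper's makes clearer that no smallness assumption on the~\(\fL_\ell\) is involved.
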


\begin{proof} 
The first part of the corollary follows from the equivalence of Proposition~\ref{thm:nsv} and that the leading order coupling terms in~\eqref{eq:grun} are of lower order.
Theory for non-autonomous systems asserts the errors in the slow manifold model are of the same order as the residuals in the governing nonlinear system \cite[Proposition~3.6]{Potzsche2006}, even for random systems \cite[e.g.]{Arnold03, Roberts06k}, and accounts for effects of time dependent coupling terms in~\(u_N^{(\ell)}\).
The second part follows because the linear~\(\D\xi{}\) terms signify generalised eigenvectors which are generally found iteratively, see Section~\ref{sec:mlas}.
\end{proof}

\subsection{Application: nonlinear pattern formation}
\label{sec:anpf}

Before proving the existence and emergence results of the next section~\ref{sec:cmtsm},
let's model the long time evolution of small amplitude solutions of the Swift--Hohenberg system in one space dimension: a field~\(u(\ix,t)\) satisfies the nondimensional nonlinear \pde
\begin{equation}
\D tu=ru-(1+\partial_{\ix\ix})^2u-u^3
\label{eq:shpde}
\end{equation}
on a domain~\(\XX\) of large extent in~\(\ix\).
For parameter \(r\)~small, the slow marginal modes are \(u\propto e^{\pm i\ix}\).
The aim is to derive the well-known Ginzburg--Landau \pde
\begin{equation}
\D tc\approx rc-3|c|^2c+4\DD \ix c\,,
\label{eq:gle}
\end{equation}
for the complex amplitude~\(c(\ix,t)\) of oscillatory patterns \(u(\ix,t)\approx ce^{i\ix}+\bar ce^{-i\ix}\)  \cite[e.g.]{Cross93}.

Significant theory exists to support the modelling of pattern formation by a Ginzburg--Landau equation.  For examples, \cite{Eckhaus93} proved it emerges from nearby initial conditions.
\cite{Mielke95} also proved attractors existed for a class of problems including the Swift--Hohenberg equation.
\cite{Schneider1999} developed the work further to find global existence results for pattern forming processes in applications to 3D Navier--Stokes problems. 
\cite{Blomker04} developed some theory for a stochastic Ginzburg--Landau model of a stochastic Swift--Hohenberg equation in large domains. 
This section provides new support for the Ginzburg--Landau approximation to complement such earlier work, but additionally quantifies the leading error in its slowly varying approximation.

Section~\ref{sec:egpes} establishes a basis for analysing the Swift--Hohenberg \pde~\eqref{eq:shpde}.
Recall we embed the \pde\ into larger problem, as illustrated by Figure~\ref{fig:pattensemble}: the linear \pde~\eqref{eq:shembed} for a field~\(\fu (\fx,y,t)\), \(2\pi\)-periodic in~\(y\), becomes here the nonlinear
\begin{equation}
\D t\fu =r\fu-(1+\partial_{yy}+2\partial_{y\fx}+\partial_{\fx\fx})^2\fu -\fu^3,
\quad \text{for }(\fx,y)\in \XX\times[0,2\pi).
\label{eq:shembedn}
\end{equation}
Then solutions of the Swift--Hohenberg \pde~\eqref{eq:shpde} are \(u(\ix,t)=\fu (\ix,\ix+\phi,t)\) for any phase~\(\phi\).
Equation~\eqref{eq:sheln} details the linear operators \(\fL_0,\ldots,\fL_4\)\,.
The \pde~\eqref{eq:shembedn} satisfies the necessary Assumption~\ref{ass:nonlin} on the linear and nonlinear parts, provided parameter \(|r|<1/N\).
In particular, \cite{Haragus2011} [\S2.4.3] show that \(\fL_0\)~satisfies the requisite properties for a local centre manifold to exist and be attractive.
We choose truncation \(N:=2\) to derive the Ginzburg--Landau~\pde~\eqref{eq:gle} and its leading error.

Then Proposition~\ref{thm:nsv} asserts that the dynamics of \pde~\eqref{eq:shembedn} near any station \(\fx=X\in\XX\) is governed by the following \pde\ for the generating polynomial~\(\tu(X,\xi,y,t)\):
\begin{eqnarray}
\D t\tu &=&r\tu-(1+\partial_{yy}+2\partial_{y\xi}+\partial_{\xi\xi})^2\tu -\tu^3
\nonumber\\&&{}
+\sum_{\ell=1}^3
\sum_{n=3-\ell}^2\binom{n+\ell}{2}
\fL_\ell {\fu_2^{(n+\ell-2)}}\xit n
+\Ord{\nuv^{4}+\xi^{4}}.
\label{eq:shgfn}
\end{eqnarray}
The first line of~\eqref{eq:shgfn} is the well-known form of the Swift--Hohenberg \pde~\eqref{eq:shpde} in terms of a `fast phase' variable~\(y\) and a `slow space' variable~\(\xi\).
The second line of~\eqref{eq:shgfn} explicitly gives the leading order coupling error in terms of uncertain `slow' variable derivatives~(\(\D\fx{}\)) (denoted by superscripts in paranetheses) of the second derivative~\(\fu_2\).
The nonlinear order of error term in the second line of~\eqref{eq:shgfn} (in terms of amplitude~\eqref{eq:nuvg}) could be of higher order, but quartic errors are sufficient to derive the Ginzburg--Landau \pde.

Upcoming theory of Section~\ref{sec:cmtsm} asserts that there exists a slow manifold for the system~\eqref{eq:shgfn}, global in the spatial domain~\XX. 
The slow manifold is exponentially quickly attractive, in that transients decay roughly like~\Ord{e^{-t}}, from all nearby initial conditions.
That is, the slow manifold model of the Ginzburg--Landau \pde\ is emergent.

To approximate the slow manifold model we solve the system~\eqref{eq:shgfn} asymptotically.
We find approximations to the autonomous system global in the space domain~\XX\ by invoking approximation theorems for the local `non-autonomous' system formed by treating the inter-station coupling~\(\Dn\fx n{\fu_2}\) as an arbitrary time dependent forcing of the local dynamics: by finding solutions of the system~\eqref{eq:shgfn} to errors~\Ord{\nuv^{4}+\xi^{4}}, the slow manifold is then known to errors~\Ord{\nuv^{4}} \cite[Proposition~3.6]{Potzsche2006}.
One further detail is that it is best to treat the bifurcation parameter~\(r\) as a `second order' quantity: that is, we modify the Definition~\eqref{eq:nuvg} of the amplitude~\nuv\ to include the extra term~\(+|r|^{1/2}\) so that the parameter \(r=\Ord{\nuv^2}\).

The computer algebra of Appendix~\ref{sec:campfshe} constructs the slow manifold model for us:
section~\ref{sec:dsuo} caters for cross-sectional structures and the time dependence in the uncertain coupling; 
section~\ref{sec:dlce} forms the leading order expression~\eqref{eq:grun} for the coupling;
and section~\ref{sec:ifsm} uses the residuals of \pde~\eqref{eq:shgfn} to iteratively correct a slow manifold approximation until the residuals are zero to the specified order of error.
For example, limiting the coupling to \(\fu_2=\sum_{k=-1}^1\fu_{2,k}e^{iky}\) for simplicity, the code finds the slow manifold is
\begin{eqnarray}
\tu&=&\tc_+e^{iy}+\tc_-e^{-iy}
-\rat1{64}\tc_+^3e^{i3y}-\rat1{64}\tc_-^3e^{-i3y}
\nonumber\\&&{}
+\Z{}\left[(-6+24\Z{})\fu_{2,0}^{(2)}+30\Z{\fu_{2,0}^{(4)}}\right]
-\xit1\Z{}\left( 6\fu_{2,0}^{(1)}+10\fu_{2,0}^{(3)}\right)
\nonumber\\&&{}
-\xit2\Z{}\left( 12\fu_{2,0}^{(2)}+15\fu_{2,0}^{(4)} \right)
+\Ord{\nuv^4}+\Ord{e^{-\gamma t}},
\label{eq:shsmgfu}
\end{eqnarray}
for some decay rate \(\gamma \in(|r|,1)\).
This equation is in terms of the generating polynomials that implicitly resolve the dynamics of the various derivatives of the local field:
to resolve the field itself, just set \(\xi=0\) to find the slow manifold
\begin{eqnarray}
\fu&=&c_+e^{iy}+c_-e^{-iy}
-\rat1{64}c_+^3e^{i3y}-\rat1{64}c_-^3e^{-i3y}
\nonumber\\&&{}
+\Z{}\left[(-6+24\Z{})\fu_{2,0}^{(2)}+30\Z{\fu_{2,0}^{(4)}}\right]
+\Ord{\nuv^4}+\Ord{e^{-\gamma t}}.
\qquad\label{eq:shsmu}
\end{eqnarray}
The first line is the classic cubic approximation to the Swift--Hohenberg field. 
The second line gives the errors including the leading coupling error (more terms appear when one resolves more wavenumbers in the coupling).
The computer algebra of Appendix~\ref{sec:campfshe} simultaneously determines the evolution on the slow manifold in terms of the the evolution of the spatial gradients implicit in the generating polynomials~\(\tc_\pm\).
Again, setting \(\xi=0\) and rewriting \(\xi\)-derivatives as \(\fx\)-derivatives recovers the evolution of the complex amplitudes themselves:
\begin{eqnarray}
\D t{c_\pm}&=&rc_\pm-3c_\mp c_\pm^2+4\DD\fx{c_\pm} 
\nonumber\\&&{}
-6u_{2,\pm1}^{(2)}\mp i12u_{2,\pm1}^{(1)}
+\Ord{\nuv^4}+\Ord{e^{-\gamma t}}.
\label{eq:glecc}
\end{eqnarray}
When the initial conditions are real, then the amplitudes~\(c_\pm\) are complex conjugate and the first line is the classic Ginzburg--Landau \pde~\eqref{eq:gle}.
In the second line, the two terms in \(\fx\)-derivatives of~\(u_{2,\pm1}\) are the leading estimate of the uncertain coupling via the cross-section mode~\(e^{\pm iy}\).
Thus monitoring the leading coupling terms in the second lines of~\eqref{eq:shsmu}--\eqref{eq:glecc} will quantitatively estimate the error due to the approximation of slow variations in space.

The next section proves the existence and emergence of such a slow manifold model, but in general.

\subsection{Centre manifold theory supports modelling}
\label{sec:cmtsm}

Given the equivalence between dynamics described by the general nonlinear \pde~\eqref{eq:upden} and the dynamics~\eqref{eq:updeng} of the local generating polynomial~\eqref{eq:ugen}, our next task is to establish the existence and emergence of a model reduction of these nonlinear dynamics.
This section establishes on how centre manifold theory applies to the local \ode{}s in generating polynomial form~\eqref{eq:updeng} when coupled to its neighbours across the domain~\XX\ via the high order derivatives in~\(r[u]\).
I call~\eqref{eq:updeng} a set of \ode{}s because the partial derivatives~\(\D\xi{}\) just access different components in the generating polynomial: in its \(\xi\)~dependence the system appears as just a finite set of equations, finite because the truncated terms~\Ord{\xi^{N+1}} are spurious in our chosen Taylor series truncation.%
\footnote{The \ode{}s~\eqref{eq:updeng} often contain partial derivatives in the cross-sectional variable~\(y\): this nomenclature overlooks such partial derivatives.}
The `infinite dimensionality' of the original physical \pde~\eqref{eq:upden} arises via the inter-station coupling of the \ode{}s~\eqref{eq:updeng} which then form a \emph{system} over the domain~\XX.
It is this system that we address.
In particular, this section establishes the following proposition.

\begin{proposition}[existence and emergence]\label{thm:gpdecm}
Under Assumptions~\ref{ass:spec} and~\ref{ass:nonlin}, and in any open domain~\(\XX\) where the gradients of~\(u_N\) are sufficiently small, 
\begin{enumerate}
\item the \pde~\eqref{eq:upden} has a \(C^N\)~centre manifold in some neighbourhood of \(u=0\), and globally in the domain~\XX.  
\item For as long as solutions stay in the neighbourhood, solutions are exponentially quickly attracted to solutions on the centre manifold.
\end{enumerate}
\end{proposition}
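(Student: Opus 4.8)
The plan is to recast the problem through Proposition~\ref{thm:nsv} and then verify the hypotheses of an infinite-dimensional centre manifold theorem for the resulting autonomous system. First I would invoke Proposition~\ref{thm:nsv} to replace the \pde~\eqref{eq:upden} by the equivalent generating-polynomial system~\eqref{eq:updeng} at every station \(X\in\XX\). Assembled over the whole domain, this collection of finite systems of \ode{}s (finite in their \(\xi\)-dependence) forms a single autonomous evolution equation \(\D t\uv=\cL\uv+\vec f(\uv)+\vec r\) on a Banach space of fields \(X\mapsto\uv(X)\in\UU^{N+1}\); the inter-station coupling \(r[u]\) of~\eqref{eq:grun}, which appears \emph{locally} as an uncertain time-dependent forcing, closes up into a well-defined (nonlocal in~\(X\)) operator once the union over stations is taken, so that the global system is deterministic and autonomous.

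Next I would establish the spectral dichotomy required by the centre manifold theorem. Because \(\cL\) is block upper triangular with \(\fL_0\) repeated \(N+1\) times on the diagonal (Section~\ref{sec:mlas}), its spectrum is that of \(\fL_0\) repeated, and Assumption~\ref{ass:spec} splits the state space into the \(m(N+1)\)-dimensional centre subspace \(\EE_c^N\) (spanned by the generalised eigenvectors \(\vec V_n\)) and a closed stable complement \(\EE_s^N\) with spectrum in \(\Re\lambda\le-\beta\). The restrictions \(\cA\) and \(\cB\) generate strongly continuous semigroups by Assumption~\ref{ass:spec} and the block-triangular structure; crucially, the non-normality of \(\cL\) means the nilpotent part of \(\cA\) contributes polynomial-in-\(t\) factors, so the centre semigroup is bounded by \(\Ord{e^{(N\alpha+\epsilon)t}}\) while the stable semigroup decays like \(e^{-\beta t}\). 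The hypothesis \(\beta>N\alpha\) of Assumption~\ref{ass:spec} is precisely what preserves the exponential gap between these two growth rates, and this gap is inherited uniformly across all stations in~\XX.

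I would then check smoothness: Assumption~\ref{ass:nonlin} furnishes a nonlinearity \(f=\Ord{|u|^p}\), \(p\ge2\), with at least \(N+1+p\) derivatives and vanishing to second order at the origin, so the assembled map \(\vec f\) is \(C^N\) and tangent to \(\EE_c^N\) at \(u=0\). With dichotomy and smoothness in hand, I would invoke the existence theory for centre manifolds of semilinear equations in Banach spaces \cite[e.g.]{Gallay93, Aulbach96, Aulbach2000, Haragus2011} to obtain a locally invariant \(C^N\) centre manifold tangent to \(\EE_c^N\) in a neighbourhood of \(u=0\); since the construction applies verbatim at every station with uniform constants on the domain~\XX\ where the gradients of~\(u_N\) (hence \(r[u]\)) are small, the local manifolds glue into one object global in~\XX, giving part~1. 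For part~2, the same gap yields the exponential dichotomy estimate \(\dv(t)\simeq e^{\cB t}\star\rvd\) of~\eqref{eq:rforcedq}: the stable coordinates are slaved to the centre coordinates up to transients \(\Ord{e^{-\gamma t}}\) for any \(\gamma\in(\alpha,\beta)\), so every solution that stays in the neighbourhood is attracted exponentially to the centre manifold.

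The step I expect to be the main obstacle is the union over all stations itself: reconciling the local \emph{non-autonomous} viewpoint (where \(r[u]\) is arbitrary forcing and one leans on the non-autonomous theory of \cite{Potzsche2006, Haragus2011}) with the global \emph{autonomous} viewpoint needed for a genuine invariant manifold. Concretely, one must choose a Banach space of fields over~\XX\ on which \(\cL\) carries the stated dichotomy uniformly, verify that the nonlocal operator \(r[u]\) is well-defined and suitably bounded there, and confirm that the restriction ``gradients of~\(u_N\) sufficiently small'' keeps the full state inside the neighbourhood where both existence and attractivity hold. The polynomial-in-\(t\) factors from the generalised eigenvectors, and the consequent need for the strict gap \(\beta>N\alpha\) rather than merely \(\beta>\alpha\), are the technical heart of making these uniform estimates go through.
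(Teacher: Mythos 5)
Your skeleton coincides with the paper's (Proposition~\ref{thm:nsv} to pass to the generating-polynomial system, the spectral splitting with gap \(\beta>N\alpha\), then the theory of \cite{Aulbach96, Aulbach2000}), but the step you yourself flag as ``the main obstacle'' is precisely where the paper's proof does its real work, and you leave it unresolved. The assembled coupling \(r[u]\) of~\eqref{eq:grun} involves the derivatives \(\Dn x\ell{u_N}\), which are unbounded operators on any reasonable space of fields over~\XX, so no infinite-dimensional centre manifold theorem applies to the assembled system as it stands; and ``gluing'' local manifolds is not an available substitute---the paper never glues, it applies the theorem once to the whole system over~\XX. The missing device is the paper's cut-off/filter construction: a single parameter~\(\delta\) that (i) cuts off the nonlinearity~\(f\) so it becomes bounded and Lipschitz, and (ii) replaces \(\D x{}\) by its low-pass filtered version, which by a Parseval argument satisfies \(\|\D x{}\|\leq\delta\) and hence \(|\Dn x\ell{u_N}|\leq\delta^\ell|u_N|\). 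Theorem~6.1 of \cite{Aulbach96} then applies to the cut-off, filtered system to give a global \(C^N\)~centre manifold, and since that system agrees with~\eqref{eq:updeng} in a finite neighbourhood, Part~1 follows; this is exactly how the hypothesis ``gradients of~\(u_N\) sufficiently small'' enters, rather than as a condition keeping solutions inside some pre-existing neighbourhood. A further point you omit: the multinomial \Ord{}~terms in~\eqref{eq:grun} are spurious---they cancel against high-order terms implicit in \(f[\tu]\)---and must be recognised as such before any boundedness claim about~\(r\) makes sense.

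Part~2 also has a genuine gap. You derive attraction from the slaving estimate~\eqref{eq:rforcedq}, but that estimate belongs to the \emph{linear} theory of section~\ref{sec:hruf}: in the nonlinear system the stable variables obey \(\de t{\Dv}=\left[\cB+G(\Cv,t)\right]\Dv\) with \(G=\Ord{|\Cv|}\), so they feed back into themselves through the nonlinearity, and a bare convolution against~\(e^{\cB t}\) does not close the argument. The paper needs two further ingredients: Theorem~4.1 of \cite{Aulbach2000} supplies a topological equivalence to a decoupled system, which yields attraction but \emph{no rate}, because the equivalence is merely continuous; the exponential rate then comes from a separate Lyapunov-function argument (Lemma~\ref{lem:roc}), which itself requires an additional structural assumption, namely \(\tr DB_0D\leq-\gamma|D|^2\) and \(\|B_n\|\leq\gamma/N\) for the off-diagonal blocks. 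Your proposal contains neither ingredient, so the exponential attraction claimed in Part~2 is not established as written.
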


\begin{proof}
Proposition~\ref{thm:nsv} establishes the generating polynomial~\eqref{eq:updeng} is equivalent to the \pde~\eqref{eq:upden}.
Thus we prove Proposition~\ref{thm:gpdecm} via the generating polynomial \ode{}s~\eqref{eq:updeng}.
Section~\ref{sec:acs} establishes the bases for the centre and stable subspaces of the system~\eqref{eq:updeng} over domain~\XX\ which separates the linear dynamics, globally in~\XX.
Using extant theory, primarily that by \cite{Aulbach96, Aulbach2000}, section~\ref{sec:teacm} then establishes that there exists a slow manifold in some neighbourhood, and
section~\ref{sec:acme} establishes the emergence of the centre manifold.
\end{proof}

\subsubsection{Centre and stable subspaces separate}
\label{sec:acs}

A centre manifold is typically based on the subspaces of an equilibrium: here we assume the equilibrium is at the origin \(\tu=0\)\,, because the coupling~\(r\) is then also zero.
Recall that section~\ref{sec:mlas} establishes the existence and parametrisation of a centre subspace provided Assumption~\ref{ass:spec} holds.
This section also invokes Assumption~\ref{ass:spec} and hence all the results of section~\ref{sec:mlas} hold here: the difference being the symbolic representation now invokes the data structure of the generating polynomials and relevant derivatives~\(\D\xi{}\).

Under Assumption~\ref{ass:spec} and for each cross-section~\(X\in\XX\)\,: there are \(m(N+1)\)~centre eigenvalues of \ode{}s~\eqref{eq:updeng}; for the generating polynomial \ode{}s~\eqref{eq:updeng} the corresponding (generalised) eigenvectors are, from~\eqref{eq:vamat}, the \(m(N+1)\)~columns of polynomials
\begin{equation*}
\tV=\begin{bmatrix} V_0& V_1+\xit1V_0&
V_2+\xit1V_1+\xit2V_0&\cdots& V_N+\xit1V_{N-1}+\cdots+\xit NV_0 \end{bmatrix}.
\end{equation*}
The following argument establishes these are the centre eigenvectors.
For the \ode{}s~\eqref{eq:updeng} for the generating polynomial~\tu, define the linear operator \(\tL:=\sum_\ell\fL_\ell\Dn\xi\ell{}\)\,.
Correspondingly define the linear operator \(\tA:=\sum_\ell A_\ell\Dn\xi\ell{}\) for centre variables~\tc.
Then, from~\eqref{eq:vzadef} and the recursion~\eqref{eq:topevn},
\begin{eqnarray*}
\tL\tV&=&\begin{bmatrix} \fL_0V_0 & 
(\fL_0V_1+\fL_1V_0)+\xit1\fL_0V_0 &
\cdots \end{bmatrix}
\\&=&\begin{bmatrix} V_0A_0 & 
(V_1A_0+V_0A_0)+\xit1V_0A_0 &
\cdots \end{bmatrix}
\\&=&\tV\tA\,.
\end{eqnarray*}
Hence the subspace \(\tu=\tV\tc\) is invariant under the linear \pde\ \(\de t\tu=\tL\tu\), and the centre variables~\tc\ satisfy
\begin{equation*}
\de t\tc=\tA\tc=\sum_\ell A_\ell\Dn\xi\ell{\tc}\,,
\end{equation*}
which directly corresponds to~\eqref{eq:dcdta} and~\eqref{eq:ssmod}.
Since \(\xi\)~is a proxy for the local longitudinal coordinate, the eigenvectors in~\tV\ encapsulate the interaction between longitudinal gradients of the field and cross-sectional structures.
The columns of~\tV\ form a basis for the centre subspace~\(\EE_c^N(X)\) at any station \(X\in\XX\).
Identical results hold for all stations \(X\in\XX\)\,, so \(\EE_c^N(\XX)=\bigtimes_{X\in\XX}\EE^N_{c}(X)\) forms the centre subspace of the system~\eqref{eq:updeng} over the domain~\XX.

Exactly analogous arguments, as in section~\ref{sec:mlas}, also establish a similar basis for the collective stable space~\(\EE_s^N(\XX)\).
At each station~\(X\in\XX\), there is a subspace \(\tu=\tW\td\) which is invariant under the linear \pde\ \(\de t\tu=\tL\tu\), and the stable variables~\td\ satisfy
\begin{equation*}
\de t\td=\tB\td=\sum_\ell B_\ell\Dn\xi\ell{\td}\,.
\end{equation*}

\subsubsection{There exists a centre manifold}
\label{sec:teacm}

To establish Part~1 of Proposition~\ref{thm:gpdecm} we invoke theory by \cite{Aulbach96, Aulbach2000} and hence now establish its preconditions in the generating polynomial form~\eqref{eq:updeng}.
Consider the general system~\eqref{eq:updeng} over the set of stations~\(X\in\XX\): then system~\eqref{eq:updeng} over all stations in~\XX\ is well-posed and autonomous except for coupling at the boundary~\(\partial\XX\) providing effectively non-autonomous forcing.
The system~\eqref{eq:updeng} has two closed \tL-invariant subspaces \(\EE_c^N(\XX)\) and~\(\EE_s^N(\XX)\), with a spectral gap.
The restrictions of~\tL\ to these spaces generate strongly continuous semigroups as they are just the collection over~\XX\ of a block upper triangular operator with~\(\fL_0\) on the diagonal, which by Assumption~\ref{ass:spec} has the requisite strongly continuous semigroups \cite[]{Aulbach96}. 
Also under Assumption~\ref{ass:spec}, the spectrum has the requisite spectral gap: \(|\Re\lambda_s|\geq\beta>N\alpha\geq N|\Re\lambda_c|\) applies uniformly over domain~\XX.

We want to consider the inter-station coupling~\(r[u]\) appearing in system~\eqref{eq:updeng} as a perturbing `nonlinearity'.
First, the (multinomial) nonlinear terms in~\(r\), gathered in the~\Ord{} term of~\eqref{eq:grun}, are spurious since they are only there to cancel with high order, nonlinear, multinomial terms implicit in~\(f[\tu]\) and thus not present in the dynamics of the \pde~\eqref{eq:upden} when expanded in its Taylor polynomial~\eqref{eq:utrtn}.
Section~\ref{sec:gfs} shows in the example how such nonlinear terms arise to cancel with other implicitly introduced terms.

Introduce~\(\delta\) to parametrise both a nonlinear cut-off of nonlinearity~\(f\) and a `low pass filter' of the coupling.
For any smooth enough function~\(h(x)\) with domain~\XX, let the Fourier transform~\(H(\kappa)\) of~\(h(x)\), in a suitably generalised sense to account for~\XX, be such that
\(h(x)=\int_{-\infty}^\infty e^{i\kappa x}H(\kappa)\,d\kappa\)\,.
By Parseval's theorem, \(\int_\XX|h|^2\,dx=L\int_{-\infty}^\infty |H|^2\,d\kappa\) where the length \(L:=\int_\XX 1\,dx\)\,.
For the purposes of this section, let the spatial derivative operator~\(\D x{}\) denote the low-pass filtered version of the usual derivative; that is, in this section
\(\D xh:=\int_{-\delta}^\delta i\kappa e^{i\kappa x}H(\kappa)\,d\kappa\)\,.
Then straightforward algebra derives the bound that
\begin{equation*}
\int_\XX \left|\D xh\right|^2dx
\leq \delta^2L\int_{-\delta}^\delta |H|^2\,d\kappa
\leq \delta^2\int_\XX|h|^2\,d\kappa\,,
\end{equation*}
and similarly for higher derivatives.
That is, this low-pass filtered derivative is bounded, \(\|\D x{}\|\leq\delta\) for a suitable norm.
Consequently higher order derivatives are also suitably bounded, \(|\Dn x\ell{u_N}|\leq\delta^\ell |u_N|\), and \(\Dn x\ell{u_N}\) are~\(C^{2N-\ell}\).
These bounds decrease with parameter~\(\delta\).

The nonlinearity~\(f\) in system~\eqref{eq:updeng} is required to be~\(C^{N+p+1}\) (Assumption~\ref{ass:nonlin}).
Since the derivatives~\(\Dn\xi n\tu\) operate only upon the generating polynomial~\tu, of \(N\)th~degree, then the derivative operator~\(\D\xi{}\) in~\(f\) is bounded.
With a suitable cut-off the nonlinearity becomes bounded and Lipschitz  \cite[e.g.]{Vanderbauwhede88, Haragus2011, Chicone2006, Mielke86}.
Theorem~6.1 of \cite{Aulbach96} then applies to the cut-off version of system~\eqref{eq:updeng}, for some small enough cut-off paramter \(\delta>0\)\,, to ensure the existence of a global \(C^N\)~centre manifold, tangent to the centre subspace~\(\EE_c^N(\XX)\) at the origin. 

The cut-off version of system~\eqref{eq:updeng} is the original in a finite neighbourhood proportional to parameter~\(\delta\), so the centre manifold of system~\eqref{eq:updeng} exists in such a neighbourhood, which establishes Part~1 of Proposition~\ref{thm:gpdecm}.
The restriction on the cut-off parameter~\(\delta\) means the resultant model is theoretically supported in regimes where the coupling derivatives~\(\Dn x\ell{u_N}\) are small enough to be in the low pass band of the filter, as required by Proposition~\ref{thm:gpdecm}.
It is in only this statement that we make the slowly varying assumption of multiscale modelling.

\subsubsection{A centre manifold emerges}
\label{sec:acme}

Given the conditions invoked in the previous section~\ref{sec:teacm}, Theorem~4.1 of \cite{Aulbach2000} asserts 
the (cut-off) system~\eqref{eq:updeng} is topologically equivalent to 
\begin{equation}
\de t\tC=\tA\tC+\tF(t,\tC),\quad
\de t\tD=\tB\tD\,,
\label{eq:dtctddt}
\end{equation}
for some centre and stable variables \tC\ and~\tD, where \(\tC\approx\tc\) and \(\tD\approx\td\)\,,  operators \tA\ and~\tB\ are given in section~\ref{sec:acs}, and for some perturbation~\tF.
Because the spectrum of~\tB\ satisfies \(\Re\lambda\leq-\beta<0\), these new stable variables \(\tD\to0\) as \(t\to\infty\)\,.
The centre manifold is \(\tD=0\)\,.
In the original system~\eqref{eq:updeng}, without the cut-off, there is the extra caveat that this decay is guaranteed to apply only as long solutions stay in the finite neighbourhood.
Because the evolution of~\tC\ under~\eqref{eq:dtctddt} is identical on the centre manifold \(\tD=0\) to off the centre manifold, solutions off the centre manifold approach solutions on the centre manifold.
This establishes Part~2 of Proposition~\ref{thm:gpdecm}---except for the exponential rate.
Thus the evolution on the centre manifold emerges as the long term dynamics global across the domain~\XX, albeit local in amplitude~\(\|\uv\|\).

The topological equivalence of \cite{Aulbach2000}, although continuous, may not be as smooth as needed.
To establish the exponential rate let's return to the `vector' form~\eqref{eq:odesn} which is more convenient here, albeit modified for nonlinearity.
Assume we have changed coordinates at each station \(X\in\XX\) to linearly separate the centre and fast variables, say \(\vec c(X,y,t)\) and \(\vec d(X,y,t)\) respectively, as in system~\eqref{eqs:ecde} but with nonlinearities.
Recall that theory for non-autonomous systems asserts there exists a \emph{smooth} coordinate transform for nonlinear non-autonomous systems that nonlinearly decouples centre and stable variables \cite[e.g.]{Roberts06k}, even for random systems \cite[e.g.]{Arnold03}. 
The procedures of sections~\ref{sec:gfs}, \ref{sec:cucncm}, and~\ref{sec:anpf} provide practical methods to construct approximations of such centre manifold models.
Thus, regarding the coupling \(\Dn x\ell{u_N}\) as a time dependent input to the local dynamics at a station,  a smooth coordinate transform exists, \((\Cv,\Dv)\leftrightarrow(\cv,\dv)\) for all stations \(X\in\XX\)\,, so that the local stable variables evolve according to 
\begin{equation}
\de t{\Dv}=\left[\cB+G(\Cv,t)\right]\Dv 
\quad\text{where }
\cB:=\begin{bmatrix}  B_0& B_1& B_2&\cdots& B_N
\\0_m& B_0& B_1&\ddots&\vdots
\\0_m&0_m& B_0&\ddots& B_{2}
\\\vdots&\ddots& \ddots& \ddots& B_1
\\0_m& \cdots &0_m&0_m& B_{0}
\end{bmatrix},
\label{eq:dddtb}
\end{equation}
and matrix \(G=\Ord{|\Cv|}\) where the filtered coupling with neighbouring stations leads to the notional time dependence in~\(G\).

Under the following assumption that characterises the spatial interactions of the stable modes, Lemma~\ref{lem:roc} completes the proof of the existence and emergence Proposition~\ref{thm:gpdecm} by bounding the rate of emergence of the centre manifold. 

\begin{assumption}
Recall the eigenvalues of~\(B_0\) have real-part\({}\leq-\beta\) (Assumption~\ref{ass:spec}).
Let the basis for \(\vec D=(D_0,D_1,\ldots,D_N)\) be chosen so that~\(B_0\) satisfies \(\tr DB_0D\leq-\gamma |D|^2\) for some \(0<\gamma \leq\beta\)\,. 
Assume the off-diagonal entries in~\cB\ satisfy \(\|B_n\|\leq \gamma /N\) for \(n=1,\ldots,N\)\,.
\end{assumption}

\begin{lemma}[rate of emergence] \label{lem:roc}
The centre manifold emerges from at least a surrounding neighbourhood of initial conditions, for as long as solutions stay in the neighbourhood, at a rate at least~\(\gamma'/2\) for any \(0<\gamma'<\gamma\).
\end{lemma}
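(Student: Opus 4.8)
The plan is to establish the exponential rate by a direct Lyapunov argument on the stable coordinates in the smoothly decoupled system~\eqref{eq:dddtb}, exploiting that the preceding Assumption is tuned so that the super-diagonal blocks of~\cB\ can absorb at most half of the dissipation supplied by its diagonal. I would work in the smooth coordinates $(\Cv,\Dv)$ whose existence is guaranteed by the cited non-autonomous reduction theory, in which the centre manifold is $\Dv=\vec0$ and $|\Dv|$ is the distance to it. Take the candidate Lyapunov function $V(t):=\tr\Dv\Dv=|\Dv|^2$. Differentiating along solutions of~\eqref{eq:dddtb} gives
\begin{equation*}
\de t{V}=\tr\Dv(\cB+\tr\cB)\Dv+\tr\Dv(G+\tr G)\Dv,
\end{equation*}
so the whole estimate reduces to controlling three pieces: the block diagonal of~\cB, its strictly upper block-triangular part, and the perturbation~$G$.

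First I would split $\cB$ into its block diagonal $\diag(B_0,\dots,B_0)$ and its strictly upper block-triangular part $S$ collecting $B_1,\dots,B_N$. The diagonal piece is handled by the coercivity hypothesis $\tr D B_0D\le-\gamma|D|^2$ applied to each block~$D_n$ and summed over $n=0,\dots,N$, contributing $\le-2\gamma V$ to $\de t{V}$. For $S$ I would estimate $\tr\Dv S\Dv$ entrywise: Cauchy--Schwarz with $\|B_k\|\le\gamma/N$ bounds $|\tr{D_n}B_kD_{n+k}|$ by $(\gamma/N)|D_n||D_{n+k}|$, and Young's inequality $|D_n||D_{n+k}|\le\rat12(|D_n|^2+|D_{n+k}|^2)$ turns the double sum into a weighted sum of the $|D_j|^2$. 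The crucial bookkeeping---and the point at which the scaling $\|B_n\|\le\gamma/N$ earns its keep---is that in this double sum each $|D_j|^2$ carries total coefficient exactly $N/2$ (it appears $N-j$ times as the leading factor and $j$ times as the trailing factor, halved by Young), so that $|\tr\Dv(S+\tr S)\Dv|\le\gamma V$, precisely half the strength of the diagonal term. Combining the three pieces yields
\begin{equation*}
\de t{V}\le-2\gamma V+\gamma V+2\|G\|V=(-\gamma+2\|G\|)V.
\end{equation*}

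Finally, since $G=\Ord{|\Cv|}$, I would restrict to a neighbourhood small enough that $2\|G\|\le\gamma-\gamma'$ for the chosen $\gamma'<\gamma$; this bound persists for as long as the solution remains in the neighbourhood, giving $\de t{V}\le-\gamma'V$, hence $V(t)\le e^{-\gamma't}V(0)$ and $|\Dv(t)|\le e^{-\gamma't/2}|\Dv(0)|$. Thus the stable coordinates, and with them the distance to the centre manifold, decay at least at rate $\gamma'/2$, the factor $\rat12$ being simply the square root of the decay of~$V$; this completes the emergence part of Proposition~\ref{thm:gpdecm}. I expect the main obstacle to be precisely the off-diagonal bookkeeping: one must confirm that the multiplicity count delivers exactly $N/2$ so that the $N$ super-diagonals consume no more than half the diagonal dissipation, leaving a strictly negative leading coefficient; once that balance is secured, the $G$-term and the neighbourhood restriction are routine.
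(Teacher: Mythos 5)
Your proof is correct and follows essentially the same route as the paper: a Lyapunov argument on \(|\Dv|^2\) for the decoupled stable system~\eqref{eq:dddtb}, using the blockwise coercivity of~\(B_0\) against the \(\gamma/N\)-bounded off-diagonal blocks, then shrinking the neighbourhood so that \(G\) costs at most \(\gamma-\gamma'\). The only difference is presentational---where you invoke Cauchy--Schwarz plus Young's inequality with the multiplicity count giving \(N/2\), the paper completes the square via \(-\frac{\gamma}{2N}\sum_{n<\nu}(|D_n|-|D_\nu|)^2\), which is the identical estimate.
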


\begin{proof}
To bound the rate of attraction to the centre manifold \(\vec D=\vec 0\), define the Lyapunov function \(\cE:=\rat12\tr{\vec D}\vec D\).
Then straightforward algebra deduces that the time derivative
\begin{eqnarray*}
\de t\cE &=&\sum_{n\leq\nu}\tr D_nB_{\nu-n}D_\nu+\tr{\vec D}G\vec D
\\&\leq&-\gamma \sum_{n=0}^N|D_n|^2+\frac{\gamma }N\sum_{n<\nu}|D_n||D_\nu|+\tr{\vec D}G\vec D
\\&\leq&-\frac{\gamma }2\sum_{n=0}^N|D_n|^2-\frac{\gamma }{2N}\sum_{n<\nu}\left(|D_n|-|D_\nu|\right)^2+\tr{\vec D}G\vec D
\\&\leq&-\frac{\gamma }2\sum_{n=0}^N|D_n|^2+\tr{\vec D}G\vec D\,.
\end{eqnarray*}
That is, 
\begin{equation*}
\de t\cE\leq \tr\Dv\left[-\rat12\gamma I+G(\Cv,t)\right]\Dv\,.
\end{equation*}
Since \(G(\vec0,t)=0\) and by continuity of~\(G\), there exists a finite neighbourhood of \(\Dv=\vec 0\) such that \(\de t\cE\leq-\gamma'\cE\) for any \(0<\gamma'<\gamma\leq\beta\) and hence \(\cE=\Ord{e^{-\gamma't}}\) as \(t\to\infty\) for as long as solutions stay in the neighbourhood.
That is, \(\Dv=\Ord{e^{-\gamma't/2}}\) which proves the lemma.
\end{proof}

\section{Conclusion}

This article develops a new general theoretical approach to supporting the much invoked practical approximation of slow variations in space.
The approach is to examine the dynamics in the locale around any cross-section.
We find that a Taylor series approximation to the dynamics is only coupled to neighbouring locales via the highest order resolved derivative. 
Treating this coupling as an `uncertain forcing' of the local dynamics we in essence apply non-autonomous centre manifold theory to prove the existence and emergence of a local model.
This support applies for all cross-sections and so establishes existence and emergence globally in the domain.
Sections~\ref{sec:mdhx}--\ref{sec:pmid} develop the approach for linear systems, and then sections~\ref{sec:nhem}--\ref{sec:mndcd} generalise the approach to nonlinear systems.

One result is that the new theory recovers a version of traditional multiple scale modelling as a special case (Corollary~\ref{cor:msm}), and justifies rigorously an established formal procedure (Corollary~\ref{cor:robformproc}).

In this theory there is no requirement for some small parameter to tend to zero. 
A centre manifold model exists for solutions up to at least some finite amplitude and up to at least some finite spatial gradients of the variables.

Because the `uncertain' coupling term accounts for errors in the  slowly varying assumption, this assumption need not be imposed on the construction of the slow manifold model (section~\ref{sec:cucncm}); it only need be a restriction on the regime of solutions to which the model is applied.
Indeed, the theory justifies the centre manifold model to exist and emerge over any open domain not including significant boundary layers or shocks.

This article focussed on the case of a centre manifold amongst centre-stable dynamics as this case is the most broadly useful in modelling dynamics.
The key required properties are the persistence of centre manifolds under perturbations by both nonlinearities and time dependent `forcing'.
Since this property of persistence is shared by other invariant manifolds, I expect the same approach will support the existence and perhaps relevance of other invariant manifolds with slow variations in space.

This approach opens much for future research.
It may be able to illuminate the thorny issue of providing boundary conditions to slowly varying models \cite[e.g]{Segel69, Roberts92c}.
One significant restriction on the analysis here is that the system is homogeneous in space: 
however, preliminary research suggests that we can adapt the approach to inhomogeneous systems, and to systems where the longitudinal operators are nonlocal rather than the local derivatives~\(\Dn xn{}\) invoked here.
Further, a generalisation to multiple slow dimensions should be valuable in order to model problems such as shells, plates and Turing patterns \cite[cf.]{Mielke91a}.

\paragraph{Acknowledgement} The Australian Research Council Discovery Project grant DP120104260 helped support this research.
I thank Arthur Norman and colleagues who maintain the Reduce software used.

\appendix
\section{Computer algebra models the heat exchanger}
\label{sec:camhe}

This section lists and describes computer algebra code to analyse the Taylor series approach to the slowly varying modelling of the heat exchanger~\eqref{er:hecann} of Figure~\ref{fig:hescheme}.
I invoked the free computer algebra package Reduce\footnote{\url{http://www.reduce-algebra.com/} gives full information about Reduce.} \cite[e.g.]{MacCallum91}.
Analogous code will work for other computer algebra packages.

An if-statement decides whether to execute this appendix, or not.  
\begin{reduce}
if 0 then begin
\end{reduce}
Then make the printing appears nicer.
\begin{reduce}
on div; on revpri; off allfac; linelength 60$
factor df,c,d;
\end{reduce}

\subsection{Substitute a Taylor series}
\label{sec:sts}

Define coefficients of the local expansion of the fields: they generally depend upon station~\(X\) (\verb|xx|) and time~\(t\).
\begin{reduce}
operator c; depend c,xx,t;
operator d; depend d,xx,t;
\end{reduce}

Choose to expand in a Taylor series to the order~\(N\) specified here; choose \(N=4\) to reproduce the modelling discussed in section~\ref{sec:mdhx}. 
The last coefficient being at an unknown location so make it additionally a function of position~\(x\) as well as station~\(X\) and time~\(t\).
\begin{reduce}
nn:=4;
depend c(nn),x;
depend d(nn),x;
\end{reduce}

Form the Taylor series~\eqref{er:ctrt} and~\eqref{er:dtrt} of the fields.
\begin{reduce}
ct:=(for n:=0:nn sum c(n)*(x-xx)^n/factorial(n));
dt:=(for n:=0:nn sum d(n)*(x-xx)^n/factorial(n));
\end{reduce}

Find residuals~\eqref{eq:ctay4}--\eqref{eq:dtay4} of the \pde{}s~\eqref{er:hecann} when the fields are expanded in this Taylor series.
\begin{reduce}
resc:=-df(ct,t)+df(dt,x);
resd:=-df(dt,t)-dt+df(ct,x);
\end{reduce}

\subsection{Local ODEs}
\label{sec:lode}

Derive a set of linearly independent equations~\eqref{eqs:5pde} simply by differentiation and evaluation at $x=X$\,:
\begin{reduce}
array odec(nn),oded(nn);
for n:=0:nn do begin
  write odec(n):=sub(x=xx,df(resc,x,n));
  write oded(n):=sub(x=xx,df(resd,x,n));
end;
\end{reduce}

\subsection{Time dependent coordinate transform}
Now derive the time dependent normal form transform of section~\ref{sec:tdnf}.
For convenience, change the name of the forcing by the uncertain coupling terms.
Invoke a time,~\verb|tt|, that is notionally independent of the `slow' time evolution of variables so that we can treat the time dependence in variables \Cv\ and~\Dv\ separately from the time dependence in the uncertain coupling \(c_{Nx}\) and~\(d_{Nx}\).
\begin{reduce}
operator w; depend w,tt;
subw:={ df(d(nn),x)=>w(d), df(c(nn),x)=>w(c) };
for n:=0:nn do begin
    write odec(n):=(odec(n) where subw);
    write oded(n):=(oded(n) where subw);
end;
depend tt,t;
\end{reduce}

Store the current transform in~\verb|cx| and~\verb|dx|, and the time derivatives of the new variables as \(\dot C_n=\verb|dcdt(n)|\) and \(\dot D_n=\verb|dddt(n)|\).
\begin{reduce}
operator cc; depend cc,xx,t; 
operator dd; depend dd,xx,t;
array dx(nn),cx(nn),dcdt(nn),dddt(nn);
let { df(dd(~n),t)=>dddt(n)
    , df(cc(~n),t)=>dcdt(n)
    , d(~n)=>dx(n), c(~n)=>cx(n) };
\end{reduce}
Let's choose to parametrise the slow subspace by the \(C_n=\verb|cc(n)|\) variables as we welcome history integrals appearing in the slow subspace evolution as encoding the uncertain coupling between neighbouring stations.

The initial approximation to the coordinate transform is the identity, with decay of stable variables~\(D_n=\verb|dd(n)|\).
\begin{reduce}
for n:=0:nn do cx(n):=cc(n);
for n:=0:nn do dx(n):=dd(n);
for n:=0:nn do dcdt(n):=0;
for n:=0:nn do dddt(n):=-dd(n);
\end{reduce}

Need to express the uncertain remainders as history integrals so use well established operators \cite[e.g.]{Roberts06k, Roberts06j}.
\begin{reduce}
operator z; linear z;
let { df(z(~f,tt,~mu),t)=>-sign(mu)*f+mu*z(f,tt,mu)
    , z(1,tt,~mu)=>1/abs(mu)
    , z(z(~r,tt,~nu),tt,~mu) =>
      (z(r,tt,mu)+z(r,tt,nu))/abs(mu-nu) when (mu*nu<0)
    , z(z(~r,tt,~nu),tt,~mu) =>
      -sign(mu)*(z(r,tt,mu)-z(r,tt,nu))/(mu-nu)
      when (mu*nu>0)and(mu neq nu)
    };
\end{reduce}

Define an operator to separate out terms in stable variables~\(D_k\).
\begin{reduce}
operator only; linear only;
let { only(dd(~k),dd)=>dd(k) , only(1,dd)=>0 };
\end{reduce}

Iterate to separate the slow  and stable subspaces: 
this algorithm takes six iterations to construct the \(N=4\) case discussed in section~\ref{sec:tdnf}.
\begin{reduce}
for iter:=1:99 do begin
    ok:=1;
    for n:=0:nn do begin 
      resd:=oded(n);
      dddt(n):=dddt(n)+(gd:=only(resd,dd));
      dx(n):=dx(n)+z(resd-gd,tt,-1); 
      resc:=odec(n);
      cx(n):=cx(n)-(fd:=only(resc,dd));
      dcdt(n):=dcdt(n)+(resc-fd);
      ok:=if {resc,resd}={0,0} then ok else 0;
    end;
    showtime;
    if ok then write iter:=iter+10000;
end;
\end{reduce}

Write the resultant slow subspace~\eqref{eq:cxC}, stable subspace~\eqref{eq:dxD} and their corresponding evolution~\eqref{eq:dDdt}--\eqref{eq:dCdt}.
\begin{reduce}
for n:=0:nn do write cx(n):=cx(n);
for n:=0:nn do write dx(n):=dx(n);
for n:=0:nn do write dcdt(n):=dcdt(n);
for n:=0:nn do write dddt(n):=dddt(n);
\end{reduce}

End the if-statement that chooses whether to execute the code of this appendix.
\begin{reduce}end;\end{reduce}

\section{Computer algebra models the nonlinear heat exchanger}
\label{sec:camnhe}

This section lists and comments on computer algebra code to analyse the Taylor series and generating function approaches to the slowly varying modelling of the nonlinear heat exchanger~\eqref{er:hecann}.
As in the preceding section, it uses the free computer algebra package Reduce.\footnote{\url{http://www.reduce-algebra.com/}}
Analogous code will work for other computer algebra packages.
Almost exactly the same code will analyse a variety of `heat exchanger' \pde{}s simply by modifying the advection and nonlinear terms.

An if-statement decides whether to execute this appendix, or not.
\begin{reduce}
if 0 then begin
\end{reduce}
Make printing prettier.
\begin{reduce}
on div; on revpri; off allfac; linelength 60$
\end{reduce}

Choose to expand in a Taylor series to the order specified here; choose \(N:=2\) for illustration, but I have confirmed the algorithm for all \(N\leq9\). 
\begin{reduce}
nn:=2;
\end{reduce}

\subsection{In the interior}

Define coefficients of the local expansion~\eqref{ers:trt} of the fields.
They depend upon time~\(t\) and the station~\(X\).
\begin{reduce}
operator c; depend c,xx,t;
operator d; depend d,xx,t;
\end{reduce}

The last term in the Taylor series~\eqref{ers:trt}, being at an unknown location, is additionally a function of position~\(x\) as well as time~\(t\) and station~\(X\).
\begin{reduce}
depend c(nn),x;
depend d(nn),x;
\end{reduce}

Form the Taylor series~\eqref{ers:trt} of the mean and difference fields.
\begin{reduce}
cc:=(for n:=0:nn sum c(n)*(x-xx)^n/factorial(n));
dd:=(for n:=0:nn sum d(n)*(x-xx)^n/factorial(n));
\end{reduce}

Find residuals of the \pde{}s~\eqref{er:hecann}.
One may modify these advection and nonlinear terms to analyse variations to the \pde{}s.
\begin{reduce}
resc:=-df(cc,t)+df(dd,x)-cc*dd;
resd:=-df(dd,t)-dd+df(cc,x)-(cc^2+dd^2)/2;
\end{reduce}

\subsection{Exact local nonlinear ODEs}
The derived expressions for the residuals are exact everywhere. 
But they are useful near the section \(x=X\)\,.
To find a set of linearly independent equations repeatedly differentiate the residuals and evaluate at $x=X$\,:
\begin{reduce}
array odec(nn),oded(nn);
for n:=0:nn do begin
  write odec(n):=sub(x=xx,df(resc,x,n));
  write oded(n):=sub(x=xx,df(resd,x,n));
end;
\end{reduce}


In computer algebra we want a variable to count the order of each of the terms in all of the equations. 
In a general truncation the Definition~\eqref{er:unorm} of the amplitude becomes
\begin{equation*}
\nuv:=\sum_{n=0}^N\left(|c_n|^{1/(n+1)}+|d_n|^{1/(n+1)}\right),
\end{equation*}
then \(c_n,d_n=\Ord{\nuv^{n+1}}\) as \(\nuv\to0\).
Introduce~\verb|small|, and count variables according to this amplitude  so that a factor labelled through multiplication by~\(\verb|small|^p\) denotes a factor~\Ord{\nuv^p}.
The procedure~\verb|wsmall| encodes this choice (as it may be varied in other contexts).
Do not explicitly count the order of the \(d_n\)~variables as on the slow manifold they will naturally be counted:
it is only if we were to perform a normal form, near identity, coordinate transform that we would want to explicitly count the \(d_n\)~variables.
\begin{reduce}
factor small;
procedure wsmall(n); small^(n+1);
weighting:=for n:=0:nn collect c(n)=wsmall(n)*c(n);
\end{reduce}

Also decide on the level of detail resolved in the influence of the coupling terms \(c_{Nx}\) and~\(d_{Nx}\).
Here propose that the coupling terms \(c_{Nx},d_{Nx}=\Ord{\nuv^{N+1}}\).
Reducing this proposed order to~\Ord{\nuv^{N}} generates quadratic terms in these uncertain factors which appears to increase complication for insignificant benefit. 
One might argue that the coupling terms should be one order higher, \(c_{Nx},d_{Nx}=\Ord{\nuv^{N+2}}\), as they both involve an extra spatial derivative: 
however, such a view is unnecessarily redolent of the multiple scales straightjacket; 
instead let's allow the domain of validity of our analysis to be larger than this by assuming the coupling to be~\(\Ord{\nuv^{N+1}}\) as is consistent with the measure of \(c_N\) and~\(d_N\) in the amplitude. 

For convenience in the computer algebra, rename the coupling terms as \(w_c\) and~\(w_d\).
Like \(c_{Nx}\) and~\(d_{Nx}\) these abbreviations~\(w\) depend upon station~\(X\) and time~\(t\), but we invoke a separate time symbol,~\verb|tt|, in order to separate the time dependence in the coupling from the other slow time evolution on the slow manifold.
\begin{reduce}
operator w; depend w,xx,tt;
coupling:={ df(d(nn),x)=>wsmall(nn)*w(d) 
          , df(c(nn),x)=>wsmall(nn)*w(c) };
depend tt,t;
\end{reduce}

Implement the accounting of order in the \ode{}s.
\begin{reduce}
for n:=0:nn do begin
    write odec(n):=sub(weighting,(odec(n)where coupling));
    write oded(n):=sub(weighting,(oded(n)where coupling));
end;
\end{reduce}

The governing \ode{}s~\eqref{er:odes} then look like the following with the explicit accounting of the orders of both~\(c_n\) and the coupling.  
These equations use the symbol~\(\epsilon\) to denote the order counting variable~\verb|small|.
\begin{eqnarray*}&&
\epsilon\dot c_{0}=d_{1}-\epsilon c_0d_0,\\&&
\epsilon^2\dot c_{1}=d_{2}-\epsilon c_0d_1-\epsilon^2c_1d_0,\\&&
\epsilon^3\dot c_{2}=-\epsilon c_0d_2-2\epsilon^2c_1d_1-\epsilon^3c_2d_0 +\epsilon^3 3w_d,
\\&&
\dot d_{0}=-d_0+\epsilon^2[c_{1}-\rat12c_0^2]-\rat12d_0^2,\\&& 
\dot d_{1}=-d_1+\epsilon^3[c_{2}-c_0c_1]-d_0d_1,\\&&
\dot d_{2}=-d_2+\epsilon^4[-c_1^2-c_0c_2]
-d_1^2-d_0d_2 +\epsilon^3 3w_c.
\end{eqnarray*}

\subsection{Time dependent slow manifold}
\label{sec:tdsm}

In the computer algebra, store the current slow manifold in variables~\verb|d0|, and the evolution of the slow variables in~\verb|g0|: the zero denoting quantities of the slow manifold.
Initially both are approximated by the zero initialisation of this array declaration.
\begin{reduce}
array d0(nn),g0(nn);
let { d(~n)=>d0(n)
    , df(c(~n),t)=>g0(n) };
\end{reduce}

Need to express the uncertain remainders as integrals so use well established operators from non-autonomous and stochastic slow manifold theory \cite[e.g.]{Roberts06k}:
\begin{equation}
\verb|z(f,tt,mu)|:=\int_0^t e^{\mu(t-s)}f(s)\,ds
\quad\text{for }\mu<0\,.
\label{eq:conv}
\end{equation}
\begin{reduce}
operator z; linear z;
let { df(z(~f,tt,~mu),t)=>-sign(mu)*f+mu*z(f,tt,mu)
    , z(1,tt,~mu)=>1/abs(mu)
    , z(z(~r,tt,~nu),tt,~mu) =>
      (z(r,tt,mu)+z(r,tt,nu))/abs(mu-nu) when (mu*nu<0)
    , z(z(~r,tt,~nu),tt,~mu) =>
      -sign(mu)*(z(r,tt,mu)-z(r,tt,nu))/(mu-nu)
      when (mu*nu>0)and(mu neq nu)
    };
\end{reduce}
Let's choose to parametrise the slow manifold by the \verb|c(n)| variables, precisely, as we are not worried by history integrals appearing in the slow manifold evolution.
This choice simplifies analysis.

Truncate to an order determined by the number of terms in the original Taylor series: errors~\Ord{\nuv^{N+2}} may be best in general---the errors being one order higher than the smallest resolved term, but in this problem it appears that \Ord{\nuv^{N+3}}~errors also gives good answers.
\begin{reduce}
write "Truncate to errors O(small^",nn+3,")";
for o:=nn:nn do let small^(o+3)=>0;
\end{reduce}

Iterate to find the slow manifold.
Modify the evolution updates by the weight of the variable~\(c_n\) as we have already counted its weight.
\begin{reduce}
for iter:=1:99 do begin
    ok:=1;
    for n:=0:nn do begin 
      d0(n):=d0(n)+z(resd:=oded(n),tt,-1); 
      g0(n):=g0(n)+(resc:=odec(n))/wsmall(n); 
      ok:=if {resc,resd}={0,0} then ok else 0;
    end;
    showtime;
    if ok then write iter:=iter+10000;
end;
\end{reduce}

Write the resultant slow manifold, and note the convolutions are only over the past history.
\begin{reduce}
for n:=0:nn do write d0(n):=d0(n);
for n:=0:nn do write g0(n):=g0(n);
\end{reduce}

This code deduces the slow manifold~\eqref{eqq:smd0} and evolution~\eqref{eqq:smdcdt} thereon, with absolute errors~\Ord{\nuv^5}.

\subsection{The slow manifold via the generating function}
\label{sec:smvgf}

Start by confirming the order of the Taylor polynomial.
Factorize \verb|small| for clarity.
\begin{reduce}
nn:=nn;
factor small;
\end{reduce}

Introduce two generating function polynomials that encapsulate the \((N+1)\) local derivatives within an \(N\)th~degree polynomial, generalising~\eqref{er:genfun}:
\begin{equation}
\tc(\xi ,X,t)=\sum_{n=0}^N c_n(X,t)\xit n\,,\quad
\td(\xi ,X,t)=\sum_{n=0}^N d_n(X,t)\xit n \,.
\label{er:ggenfun}
\end{equation}

Omit the higher order terms, as with correct absolute error truncation they now have no effect on the results, and only complicate the details of the construction.
Anyway, the \verb|hot| labelled terms are only appropriate for the case \(N=2\) (and only for this specific nonlinearity).
\begin{reduce}
hot:=0;
\end{reduce}

Parametrise the slow manifold by~\tc\ which evolves in time according to~\eqref{er:tcte} with explicit count of order in~\verb|small|, denoted by~\(\epsilon\), to control asymptotic truncation.
The modelling involves two, time dependent, `uncertain' terms called~\verb|w(c)| and~\verb|w(d)| for no good reason.
The following appears to be compatible with the earlier slow manifold.
\begin{reduce}
depend tc,t,xi;
let df(tc,t)=>small*df(td,xi)-small*tc*td
    +(nn+1)*xi^nn/factorial(nn)*w(d)
    +hot*small*(
        xi^3/2*((df(tc,xi)-xi*df(tc,xi,2))*df(td,xi,2)
            +(df(td,xi)-xi*df(td,xi,2))*df(tc,xi,2))
        +xi^4/4*df(tc,xi,2)*df(td,xi,2) )
    ;
\end{reduce}

Now iterate to construct the slow manifold starting from the initial approximation that \(\td=0\).  
Find that truncating to \emph{relative} error~\Ord{\epsilon^{N+2}} is the same as the slow manifold construction of section~\ref{sec:tdsm}.
Also truncate to~\Ord{\xi^{N+1}}, corresponding to the finite generating polynomial, because the neglected terms do not change the results we extract. 
But actually implement truncation to~\Ord{\epsilon^{N+2}+\xi^{N+2}} for three reasons: because it is more efficient; because differentiation by~\(\xi\) is always accompanied by a multiplication by~\(\epsilon\); and because the leading order term in~\(\xi^{N+1}\) already has a factor of~\(\epsilon\).
\begin{reduce}
td:=0$
for o:=nn+2:nn+2 do let { 
    small^o=>0, xi^o=>0,
    xi*small^(o-1)=>0, small*xi^(o-1)=>0,
    small^~p*xi^~q=>0 when p+q>=o
    };
for iter:=1:99 do begin
\end{reduce}

Compute the residual of the \ode~\eqref{er:tdte}, and use the residual to update~\td.
The evolution of~\(\tc\) is then automatically updated by Reduce via the earlier \verb|let|-rule.
\begin{reduce}
    resd:=-df(td,t)-td+small*df(tc,xi)
    -small/2*(tc^2+td^2)+(nn+1)*xi^nn/factorial(nn)*w(c)
    +hot*small*(
        xi^3/2*((df(tc,xi)-xi*df(tc,xi,2))*df(tc,xi,2)
            +(df(td,xi)-xi*df(td,xi,2))*df(td,xi,2))
        +xi^4/8*(df(tc,xi,2)^2+df(td,xi,2)^2) )
    ;
    td:=td+z(resd,tt,-1);
\end{reduce}

Exit the iteration when the residual is zero to the specified order. 
\begin{reduce}
    showtime;
    if resd=0 then write iter:=iter+10000;
end;
\end{reduce}

Upon finishing the construction, find its version of the slow manifold evolution.
\begin{reduce}
dcdt:=df(tc,t)$
\end{reduce}

\subsection{Compare the two slow manifold views}
\label{sec:ctsmv}

Recover and compare the evolution and slow manifold of the generating polynomial results with that of the previous detailed Taylor series analysis.
Truncate to one higher order of error to match the \emph{absolute} error used in sections~\ref{sec:sme}--\ref{sec:ufnsm}.
\begin{reduce}
for o:=nn:nn do let small^(o+3)=>0;
\end{reduce}
Do not need to count the order of~\(c_n\) in~\(\tc\) as the various derivatives in the expansion are already counted, but we do need to multiply the various components by the appropriate absolute order when extracting the components from the generating polynomial.
\begin{reduce}
array dcndt(nn),dnn(nn);
tc:=for n:=0:nn sum xi^n/factorial(n)*c(n);
for n:=0:nn do write 
    dcndt(n):=wsmall(n)*coeffn(dcdt,xi,n)*factorial(n);
for n:=0:nn do write 
    dnn(n):=wsmall(n)*coeffn(td,xi,n)*factorial(n);
\end{reduce}

Passes the comparison check beautifully to confirm the generating polynomial approach is precisely equivalent to the specified order.
\begin{reduce}
for n:=0:nn do begin
    write "cerror",n,":=",dcndt(n)-wsmall(n)*g0(n);
    write "derror",n,":=",dnn(n)-d0(n);
end;
\end{reduce}
End the if-statement.
\begin{reduce}end;\end{reduce}

\section{Computer algebra models pattern formation in the Swift--Hohenberg PDE}
\label{sec:campfshe}

This section lists and comments on computer algebra code to analyse the generating function approach to the slowly varying modelling of the Swift--Hohenberg \pde~\eqref{eq:shpde}.
As in the preceding sections, it invokes the free computer algebra package Reduce.\footnote{\url{http://www.reduce-algebra.com/}}
Analogous code will work for other computer algebra packages.
Almost exactly the same code will analyse a variety of similar \pde{}s simply by modifying the nonlinear and perturbative terms.

An if-statement decides whether to execute this appendix, or not.
\begin{reduce}
if 1 then begin
\end{reduce}
Make printing prettier.
\begin{reduce}
on div; on revpri; off allfac; linelength 60$
\end{reduce}

Choose to analyse to the order specified here; choose \(N:=2\) for illustration, but have confirmed the algorithm works for all orders \(N\leq6\).
\begin{reduce}
nn:=2;
\end{reduce}

\subsection{Define some useful operators}
\label{sec:dsuo}

We expand the pattern solution in a complex Fourier series in the `fast' variable~\(y\), so here define operator \(\verb|cis|\,\theta=e^{i\theta}\).
Do not simplify \verb|cis(0)| as we want it for later pattern matching.
\begin{reduce}
operator cis;
let { df(cis(~a),~y)=>cis(a)*i*df(a,y)
    , cis(~a)*cis(~b)=>cis(a+b)
    , cis(~a)^~p=>cis(p*a)
    };
\end{reduce}

In the local slow manifold we need to account for the time variation of the uncertain coupling as history integrals.
I invoke established convolution operators~\eqref{eq:conv} from non-autonomous and stochastic slow manifold theory \cite[e.g.]{Roberts06k}.
Need to use a `fast' time,~\verb|tt|, that is notionally independent of the `slow' time evolution of variables.
\begin{reduce}
depend tt,t,cis;
operator z; linear z;
let { df(z(~f,tt,~mu),t)=>-sign(mu)*f+mu*z(f,tt,mu)
    , z(1,tt,~mu)=>1/abs(mu)
    , z(z(~r,tt,~nu),tt,~mu) =>
      (z(r,tt,mu)+z(r,tt,nu))/abs(mu-nu) when (mu*nu<0)
    , z(z(~r,tt,~nu),tt,~mu) =>
      -sign(mu)*(z(r,tt,mu)-z(r,tt,nu))/(mu-nu)
      when (mu*nu>0)and(mu neq nu)
    };
\end{reduce}

To find structures in the cross-section, define the operator~\verb|linv| to generate updates in the `fast' time and cross-section variables.
\begin{reduce}
operator linv; linear linv;
let { linv(cis(~m*y),cis)=>cis(m*y)/(1-m^2)^2
    , linv(~~a*cis(0),cis)=>z(a,tt,-1)*cis(0) 
    , linv(~~a*cis(~m*y),cis)=>z(a,tt,-(1-m^2)^2)*cis(m*y) 
    };
\end{reduce}

\subsection{Derive the leading coupling expression}
\label{sec:dlce}

One novel aspect of our approach is we quantify the leading order estimate of error in the slowly varying approximation.
To do so we need various terms in the highest order derivative of the notional Taylor series expansion: thus introduce~\verb|un| to denote~\(\fu_N(X,\fx,y,t)\), and use~\verb|un(p,k)| to denote the \(p\)th \(\fx\)-derivative of the \(k\)th~mode in the cross-section,~\(e^{iky}\).

In the computer algebra we prefer a variable to count the order of  each of the terms in all of the equations. 
Introduce~\verb|small|, and count variables according to the Definition~\eqref{eq:nuvg} of amplitude.
\begin{reduce}
factor small;
\end{reduce}

Decide how many modes of the `uncertain' coupling that we resolve in the cross-section by setting~\verb|kk|, although because we only resolve the linear effects so only modes \(k=\pm1\) affect the slow manifold evolution.
\begin{reduce}
kk:=2;
operator un; depend un,x,xx,tt;
tu:=small^(nn+1)*(for k:=-kk:kk sum un(0,k)*cis(k*y));
coupling:={ df(un(~p,~k),x)=>un(p+1,k) }$
\end{reduce}
Construct the uncertain coupling for the generating function approach.
Code into~\verb|ru|, the known terms in equation~\eqref{eq:grun} for~\(r[u]\) and invoke the linear operators~\eqref{eq:sheln} for the Swift--Hoheberg \pde.
When we later differentiate with respect to~\(\xi\) we automatically multiply by~\verb|small|: which means that here we have to compensate by dividing by~\verb|small| for each power of~\(\xi\).
\begin{reduce}
factor xi;
write
ru:=for ell:=1:4 sum
    for n:=max(nn-ell+1,0):nn sum xi^n/factorial(n)/small^n
    *factorial(ell+n)/factorial(nn)/factorial(ell+n-nn)
    *(df(if ell=1 then -4*df(tu,y)-4*df(tu,y,3)
    else if ell=2 then -2*tu-6*df(tu,y,2)
    else if ell=3 then -4*df(tu,y)
    else if ell=4 then -tu
    ,x,ell+n-nn) where coupling)$
\end{reduce}

\subsection{Initialise the slow manifold}

Parametrise the slow manifold by~\(c_\pm\) which evolves in time according to \(\D t{c_\pm}=g_\pm\) for some right-hand side to find.
\begin{reduce}
depend cp,t,xi;
depend cm,t,xi;
let { df(cp,t)=>gp, df(cm,t)=>gm };
\end{reduce}

The linear approximation is the slow subspace of the span of~\(e^{\pm iy}\), which are approximately equilibria.
\begin{reduce}
tu:=small*(cp*cis(y)+cm*cis(-y))$
gp:=gm:=0$
\end{reduce}

\subsection{Iteration finds the slow manifold}
\label{sec:ifsm}

Now iterate to construct the slow manifold.  
Implement truncation to residuals~\Ord{\nuv^{N+2}+\xi^{N+2}} because it is efficient, and because differentiation by~\(\xi\) is always accompanied by a multiplication by~\verb|small|.
\begin{reduce}
for o:=nn+2:nn+2 do let { 
    small^o=>0, xi^o=>0,
    xi*small^(o-1)=>0, small*xi^(o-1)=>0,
    small^~p*xi^~q=>0 when p+q>=o
    };
for iter:=1:99 do begin
\end{reduce}

In each iteration, compute the residual of the Swift--Hohenberg \pde~\eqref{eq:shpde}, including the leading `uncertain' coupling as in equation~\eqref{eq:shgfn}.
The multiplication by~\verb|small| that counts order according to amplitude~\eqref{eq:nuvg}, corresponds symbolically to the multiplication by~\(\epsilon\) that arise in the method of multiple scales, as established by  Corollary~\ref{cor:msm}.
\begin{reduce}
    v:=tu+df(tu,y,y)+small*2*df(tu,xi,y)+small^2*df(tu,xi,xi);
    resu:=-df(tu,t) +small^2*rr*tu -tu^3
    -(v+df(v,y,y)+small*2*df(v,xi,y)+small^2*df(v,xi,xi))
    +ru;
    write lengthres:=length(resu);
\end{reduce}

Use the residual to update the evolution on the slow manifold in~\(g_\pm\) and the slow manifold itself~\(\tu(X,\xi,y,c_+,c_-)\).
\begin{reduce}
    gp:=gp+(gpd:=coeffn(resu,cis(+y),1))/small;
    gm:=gm+(gmd:=coeffn(resu,cis(-y),1))/small;
    tu:=tu+linv(resu-gpd*cis(y)-gmd*cis(-y),cis);
\end{reduce}

Exit the iteration when the residual is zero to the specified order of errors. 
\begin{reduce}
    showtime;
    if resu=0 then write iter:=iter+10000;
end;
\end{reduce}

Upon finishing the construction, find its version of the slow manifold evolution.
\begin{reduce}
write dcpdt:=gp;
\end{reduce}

End the if-statement and the execution.
\begin{reduce}end;end;\end{reduce}

\raggedright
\bibliographystyle{agsm}
\bibliography{ajr,bib}

\end{document}